\documentclass[11pt,reqno]{amsart}
\usepackage{amsmath,amssymb,subfig,url}

\usepackage{tikz}
\usetikzlibrary{calc}
\usetikzlibrary{arrows}
\usetikzlibrary{patterns}
\usetikzlibrary{through}
\usetikzlibrary{plotmarks}
\usepackage{subfig}
\usepackage{afterpage}

\newtheorem{theorem}{Theorem}[section]
\newtheorem{observation}[theorem]{Numerical Observation}

\newtheorem{lemma}[theorem]{Lemma}
\newtheorem{proposition}[theorem]{Proposition}

\newtheorem{conjecture}[theorem]{Conjecture}

\newtheorem*{remark}{Remark}

\newtheorem*{definition}{Definition}

\numberwithin{equation}{section}

\newcommand{\e}{\varepsilon}
\newcommand{\R}{{\mathbb R}}

\captionsetup[subfigure]{parskip=0pt,margin=0pt}

\begin{document}

\title[]{Dirichlet eigenvalue sums on triangles are minimal for equilaterals}

\author[]{R. S. Laugesen and B. A. Siudeja}
\address{Department of Mathematics, University of Illinois, Urbana,
IL 61801, U.S.A.} \email{Laugesen\@@illinois.edu, Siudeja\@@illinois.edu}
\date{\today}

\keywords{Isodiametric, isoperimetric, fixed membrane.}
\subjclass[2000]{\text{Primary 35P15. Secondary 35J20}}

\begin{abstract}
Among all triangles of given diameter, the equilateral triangle is shown to minimize the sum of the first $n$ eigenvalues of the Dirichlet Laplacian, for each $n \geq 1$. In addition, the first, second and third eigenvalues are each proved to be minimal for the equilateral triangle.

The disk is conjectured to be the minimizer among general domains.
\end{abstract}

\maketitle

\section{\bf Results and Conjectures}

This paper establishes geometrically sharp lower bounds for the Dirichlet eigenvalues of the Laplacian on triangular domains. These eigenvalues are important in solving the wave, diffusion and Schr\"{o}dinger equations. The eigenfunctions satisfy $-\Delta u = \lambda u$ with boundary condition $u = 0$, and the eigenvalues satisfy
\[
0 < \lambda_1 < \lambda_2 \leq \lambda_3 \leq \dots \to \infty .
\]

We start with a sharp bound on eigenvalue sums. Notice we normalize the eigenvalues to be scale invariant, by multiplying with the square of the diameter $D$ of the domain.
\begin{theorem} \label{th:trace}
Among all triangular domains of given diameter, the equilateral triangle minimizes the sum of the first $n$ eigenvalues of the Dirichlet Laplacian, for each $n$.

That is, if $T$ is a triangular domain, $E$ is equilateral, and $n \geq 1$, then
\[
(\lambda_1 + \cdots + \lambda_n) D^2 \Big|_T \geq (\lambda_1 + \cdots + \lambda_n) D^2 \Big|_E
\]
with equality if and only if $T$ is equilateral.
\end{theorem}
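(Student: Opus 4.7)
The plan is to bound $\sum_k\lambda_k(T)$ from below by transplanting its Dirichlet eigenfunctions to $E$ via an affine map, and to use the structure of the admissible region for the third vertex as the key geometric input. Normalize so the diameter of $T$ equals $1$, and place the longest side of $T$ along $[0,1]\times\{0\}$; the third vertex $V=(p,h)$ then lies in the lens
\[
L=\{(p,h)\colon h>0,\ p^2+h^2\le 1,\ (p-1)^2+h^2\le 1\},
\]
whose topmost point is $(\tfrac12,\tfrac{\sqrt3}{2})$, the apex of the equilateral triangle $E$ sharing this base. In particular, $h\le\sqrt3/2$ throughout $L$.

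First consider the isoceles subequilateral case $p=\tfrac12$. Let $A\colon E\to T$ be the vertical compression $A(x,y)=(x,\tfrac{2h}{\sqrt3}y)$, and let $v_1,\dots,v_n$ be the $L^2(T)$-orthonormal first $n$ Dirichlet eigenfunctions of $T$. Define trial functions $\hat v_k=\sqrt{|\det A|}\,v_k\circ A$ on $E$; they lie in $H^1_0(E)$ and are orthonormal in $L^2(E)$. A change of variables yields
\[
\sum_{k=1}^n\int_E|\nabla\hat v_k|^2\,dx \;=\; \int_T\sum_{k=1}^n\!\Bigl[(\partial_x v_k)^2+\tfrac{4h^2}{3}(\partial_y v_k)^2\Bigr]dy \;\le\; \sum_{k=1}^n\int_T|\nabla v_k|^2 \;=\; \sum_{k=1}^n\lambda_k(T),
\]
since $4h^2/3\le 1$. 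Rayleigh's principle on $E$ then yields $\sum_k\lambda_k(E)\le\sum_k\lambda_k(T)$, with equality only when $h=\sqrt3/2$, i.e., $T=E$.

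For arbitrary $V=(p,h)\in L$, the same transplantation with the upper-triangular $A\colon E\to T$ fixing the base produces
\[
\sum_k\lambda_k(E)\;\le\;\mathrm{tr}(AA^\top\overline W), \qquad \overline W := \int_T\sum_{k=1}^n \nabla v_k\otimes\nabla v_k\,dy.
\]
A direct calculation shows that the two defining inequalities of $L$ combine to give $\mathrm{tr}(AA^\top)=1+[(2p-1)^2+4h^2]/3\le 2$, with equality only at the equilateral apex, so the theorem reduces to the matrix inequality $\mathrm{tr}((AA^\top-I)\overline W)\le 0$. The main obstacle is that $AA^\top-I$ is indefinite once $p\ne\tfrac12$ (its trace is non-positive but its determinant is negative), so the inequality is not a pointwise matrix bound and must exploit structural properties of $\overline W$. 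I expect the key additional step to be a reduction to the isoceles case via the shear $(x,y)\mapsto(x+(\tfrac12-p)y/h,\,y)$ that sends $T$ to the isoceles triangle with the same base and apex height $h$, combined with a monotonicity argument for the eigenvalue sum along the one-parameter family of sheared triangles. Since individual eigenvalues need not be monotone under such shears for $n\ge 2$, the argument must exploit cancellations in the trace of the Hadamard variational derivative, and establishing this monotonicity is the principal technical difficulty.
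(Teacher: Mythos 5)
Your first step (the superequilateral isosceles case $p=\tfrac12$) is correct: there the affine map from $E$ to $T$ is a pure vertical compression, so $AA^\top\le I$ pointwise, the transplanted energy can only decrease, and the trace form of the Rayleigh principle gives $\Lambda_n(E)\le\Lambda_n(T)$ with equality only at $h=\sqrt3/2$. But this is the easy case, and the rest of the proposal has a genuine gap exactly where you flag it. For a scalene or subequilateral triangle the matrix $AA^\top-I$ is indefinite, and your proposed repair --- shear $T$ to the isosceles triangle with the same base and apex height, and invoke monotonicity of $\Lambda_n$ along the shear family --- is not established. That shear family is precisely a (continuous) Steiner symmetrization of the triangle about the vertical line $x=\tfrac12$; symmetrization is known to decrease $\lambda_1$, but there is no known result (and no argument sketched in your proposal) that it decreases $\lambda_1+\cdots+\lambda_n$ for $n\ge2$. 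Since every non-isosceles triangle, and every subequilateral one, falls into this unresolved branch, the proposal as written proves the theorem only for superequilateral isosceles triangles.

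The paper avoids this difficulty by a different reduction and a different ``second endpoint.'' First, any triangle is \emph{contained} in a subequilateral triangle of the \emph{same} diameter (extend the second-longest side to the length of the longest), so domain monotonicity reduces the theorem to subequilateral isosceles triangles $T(0,b)$, $b>\sqrt3$ --- the case your shear argument cannot reach. Then, instead of trying to control the indefinite error term $\mathrm{tr}\bigl((AA^\top-I)\overline W\bigr)$ directly, the paper transplants the unknown eigenfunctions of $T(0,b)$ onto \emph{two} reference domains: the equilateral triangle $E$ and the $30$-$60$-$90^\circ$ right triangle $F_\pm$. Writing $\gamma_n$ for the fraction of Dirichlet energy carried by the $y$-derivatives (the analogue of one entry of your $\overline W$), the comparison with $E$ succeeds when $\gamma_n<3/4$ and the comparison with $F_\pm$ succeeds when $\gamma_n\ge3/4$, so no estimate on $\overline W$ is ever needed. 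The price is a separate, explicitly computable comparison $\frac{6}{11}\Lambda_n D^2|_{F_\pm}>\Lambda_n D^2|_E$, proved by two-sided lattice-point counts for the equilateral spectrum (Lemmas~\ref{counting}--\ref{antisym}) plus a finite check for $n\le110$. If you want to salvage your approach, you would need either to prove the shear monotonicity of eigenvalue sums (a substantial open problem in itself) or to replace it with a second transplantation target in the spirit of Proposition~\ref{unknown}.
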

The theorem is proved in Section \ref{sectraces}.

The case $n=1$ for the fundamental tone was known already, by combining P\'{o}lya and Szeg\H{o}'s result \cite[Note A]{PS51} that $\lambda_1 A$ is minimal for the equilateral triangle with the elementary isodiametric inequality that $D^2/A$ is minimal among triangles for the equilateral triangle. For $n = 2$ this argument fails, because $(\lambda_1+\lambda_2)A$ is \emph{not} minimal for the equilateral triangle (see Figure~\ref{figLA}b). Thus the extension from $n=1$ to $n \geq 1$ in Theorem~\ref{th:trace} is possible because we weaken the geometric normalization from area to diameter.

The eigenvalues of the equilateral triangle are known explicitly. We recall the formula in Appendix~\ref{equilateral}.

Lower bounds on eigenvalues, as in Theorem~\ref{th:trace}, are generally difficult to establish because eigenvalues are characterized by \emph{minimization} of a Rayleigh quotient. The best known lower bound on the first eigenvalue of a general domain is the Rayleigh--Faber--Krahn theorem that the fundamental tone is minimal for the disk, among domains of given area; in other words, the scale invariant quantity $\lambda_1 A$ is minimal for the disk. The standard proof is by rearrangement. Rearrangement methods cannot be applied for higher eigenvalues, $n>1$, because the higher modes change sign. Our method is different. As we explain in Section \ref{sectraces}, this new
\[
\text{Method of the Unknown Trial Function}
\]
involves transplanting the (unknown) eigenfunctions of an arbitrary triangle to yield trial functions for the (known) eigenvalues of certain equilateral and right triangles. Transplantation distorts the derivatives in the Rayleigh quotient, but by ``interpolating'' the various transplantations, we can eliminate the distortions and arrive at sharp lower bounds on the eigenvalues of the arbitrary triangle.

\begin{remark} \rm
Theorem~\ref{th:trace} gives a geometrically sharp result on all eigenvalue sums. It is the first sharp lower bound of this kind, so far as we are aware.

One might contrast the Theorem with the lower bound of Berezin \cite{B72} and Li and Yau \cite{LY83} that $(\lambda_1 + \cdots + \lambda_n) A > 2\pi n^2$. This estimate is not geometrically sharp because there is no domain known on which equality holds, when $n$ is fixed. The Berezin--Li--Yau inequality is instead asymptotically sharp for each domain as $n \to \infty$, by the Weyl asymptotic $\lambda_n A \sim 4\pi n$.
\end{remark}

Next we minimize the eigenvalues individually.
\begin{theorem} \label{th:secondtone}
For each triangular domain one has
\[
\lambda_1 D^2 \geq \frac{3 \cdot 16 \pi^2}{9}  , \qquad \lambda_2 D^2 \geq \frac{7 \cdot 16 \pi^2}{9}   \qquad \text{and} \qquad \lambda_3 D^2 \geq \frac{7 \cdot 16 \pi^2}{9}  .
\]
In each inequality, equality holds if and only if $T$ is equilateral.
\end{theorem}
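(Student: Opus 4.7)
My plan is to reduce Theorem~\ref{th:secondtone} to the single inequality $\lambda_2 D^2 \ge 112\pi^2/9$ and then attack it directly. The $\lambda_1$ bound is precisely Theorem~\ref{th:trace} with $n=1$, using $\lambda_1 D^2 = 48\pi^2/9$ at the equilateral (Appendix~\ref{equilateral}). Since $\lambda_3 \ge \lambda_2$ for every domain, the $\lambda_3$ bound follows from the $\lambda_2$ bound.

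Theorem~\ref{th:trace} with $n=2$ is not by itself strong enough: combined with $\lambda_1 \le \lambda_2$ it yields only $\lambda_2 D^2 \ge 80\pi^2/9$. I would therefore extend the Method of the Unknown Trial Function so as to handle two eigenfunctions simultaneously. Let $u_1, u_2$ be $L^2(T)$-orthonormal eigenfunctions for $\lambda_1(T), \lambda_2(T)$, and let $E$ be the equilateral triangle of the same diameter $D$, with first eigenfunction $\phi_1^E$. Using the folding construction of Section~\ref{sectraces}, transplant $u_1, u_2$ to $\tilde u_1, \tilde u_2 \in H^1_0(E)$. The crucial structural fact is that the second eigenspace of $E$ is two-dimensional ($\lambda_2|_E = \lambda_3|_E$), so there is always a nonzero $\tilde v \in \mathrm{span}\{\tilde u_1, \tilde u_2\}$ with $\tilde v \perp \phi_1^E$ in $L^2(E)$. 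If the transplantation preserves $L^2$-orthonormality up to controllable error and satisfies $\int_E |\nabla \tilde u_i|^2 \le \lambda_i(T)$ for $i = 1, 2$, then the min-max principle gives
\[
\lambda_2(E) \le \frac{\int_E |\nabla \tilde v|^2}{\int_E \tilde v^2} \le \lambda_2(T),
\]
which rearranges to the desired bound.

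The main obstacle is manufacturing a transplantation that controls each of the two Dirichlet integrals individually, rather than only their sum (which is what Theorem~\ref{th:trace} already delivers). A natural attempt is to interpolate several candidate foldings as in Section~\ref{sectraces} and to tune the interpolation so that both the orthogonality constraint against $\phi_1^E$ and the two-sided energy bounds are respected; one must also control the cross-term $\int_E \nabla \tilde u_1 \cdot \nabla \tilde u_2$ that appears when passing from the $\tilde u_i$ to the pencil element $\tilde v$. If this direct approach proves too delicate, a fallback strategy is a geometric case split: first symmetrize $T$ in a diameter-preserving way to reduce to isoceles triangles (e.g.\ by pushing the vertex opposite the longest side onto the perpendicular bisector of that side), then analyze the resulting one-parameter family by monotonicity in the apex angle, together with explicit bounds in the degenerate limits. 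Rigidity is tracked through whichever route succeeds, yielding that equality forces $T$ to be equilateral.
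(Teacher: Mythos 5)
Your reductions are fine and agree with the paper: the $\lambda_1$ bound is Theorem~\ref{th:trace} with $n=1$, the $\lambda_3$ bound follows from $\lambda_3\ge\lambda_2$, and your observation that Theorem~\ref{th:trace} with $n=2$ only yields $\lambda_2D^2\ge 80\pi^2/9$ correctly identifies why something new is needed. But the core of your argument --- the inequality $\lambda_2D^2\ge 112\pi^2/9$ --- is left as a sketch whose central step cannot work as described. You ask for a transplantation $u_i\mapsto\tilde u_i\in H^1_0(E)$ with the \emph{individual} energy bounds $\int_E|\nabla\tilde u_i|^2\le\lambda_i(T)$. Under the linear maps of Section~\ref{sectraces} the Dirichlet integral of $\tilde u_i$ is distorted by a factor depending on the fraction of the energy of $u_i$ carried by its $y$-derivative (the quantity $\gamma$ of Lemma~\ref{lemtrace}), and this factor exceeds $1$ whenever that fraction is large; since the $u_i$ are unknown, there is no way to rule this out for each $i$ separately. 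The entire point of the trace method is that only the \emph{sum} of the Rayleigh quotients can be controlled, by splitting into cases according to the size of $\gamma_n$ and switching to a different target triangle when $\gamma_n$ is large; your scheme has no analogue of that case split for a single eigenfunction. Even granting the individual bounds, the min--max step requires controlling the off-diagonal entries of both Gram matrices on $\mathrm{span}\{\tilde u_1,\tilde u_2\}$, which you flag as "delicate" but do not resolve; and the constraint $\tilde v\perp\phi_1^E$ pins down the coefficients, so you cannot optimize them away. (Incidentally, the two-dimensionality of the second eigenspace of $E$ plays no role here: orthogonality to the single function $\phi_1^E$ can always be arranged in a two-dimensional span.) Your fallback --- monotonicity of $\lambda_2D^2$ in the aperture of isosceles triangles --- is precisely the statement the paper can only verify numerically (Figure~\ref{fig:tones}b); no proof is offered there or here.

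For comparison, the paper's route is quite different: after the reduction to subequilaterals $T(0,b)$, it treats $b\ge 5/2$ by domain monotonicity against a circular sector, and for $\sqrt3<b\le 5/2$ it writes $\lambda_2=\Lambda_2-\lambda_1$, bounds $\lambda_1$ from \emph{above} by P\'olya's inequality \eqref{polyaup}, and proves a lower bound on $\Lambda_2$ strictly stronger than Theorem~\ref{th:trace} by interpolating between the equilateral triangle and the subequilateral triangle $T(0,5/2)$, whose second eigenvalue is estimated rigorously by the Moler--Payne method (Lemmas~\ref{condCh} and \ref{evalCh}). None of these ingredients --- the decomposition $\Lambda_2-\lambda_1$, the upper bound on $\lambda_1$, the strengthened $\Lambda_2$ bound, or the validated numerics for a single reference triangle --- appears in your proposal, and without some substitute for them the gap remains.
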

The difficult result in the theorem is the inequality for $\lambda_2$. We prove it by writing $\lambda_2=(\lambda_1+\lambda_2)-\lambda_1$ and then combining a new lower bound on $\lambda_1 + \lambda_2$ with a known upper bound on $\lambda_1$. See Section \ref{sec:secondtone_proof}. Note the inequality for $\lambda_3$ in the theorem follows directly from the one for $\lambda_2$, and the inequality for the first eigenvalue is just the case $n=1$ of Theorem~\ref{th:trace}.

It is conceivable that \emph{every} eigenvalue might be minimal for the equilateral triangle, which would extend Theorem~\ref{th:secondtone} to all $n$.
\begin{conjecture} \label{co:alltones}
The equilateral triangle minimizes each eigenvalue of the Dirichlet Laplacian, among all triangles of given diameter. That is, $\lambda_n D^2$ is minimal when the triangle is equilateral, for each $n \geq 1$.
\end{conjecture}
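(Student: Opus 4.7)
My plan is to extend the strategy used for $\lambda_2$ in Theorem~\ref{th:secondtone} to all $n \geq 4$, proceeding by induction on $n$. The base cases $n=1,2,3$ are supplied by Theorem~\ref{th:secondtone}. For the inductive step, decompose
\[
\lambda_n\big|_T \,=\, \sum_{k=1}^n \lambda_k\big|_T \,-\, \sum_{k=1}^{n-1} \lambda_k\big|_T
\]
and invoke Theorem~\ref{th:trace} at level $n$ to bound the first sum from below by $\sum_{k=1}^n \lambda_k D^2 |_E / D_T^2$. The task reduces to producing a matching upper bound on $\sum_{k=1}^{n-1} \lambda_k D^2 |_T$, with a defect controllable by the strict excess in Theorem~\ref{th:trace}.

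For the upper bound I would transplant the \emph{known} equilateral eigenfunctions $\phi_1^E, \dots, \phi_{n-1}^E$ (recorded in Appendix~\ref{equilateral}) to $T$ via an affine map $L \colon E \to T$, using the fact that every triangle is an affine image of the equilateral. The trial subspace $\operatorname{span}\{\phi_k^E \circ L^{-1}\}$ yields, by Rayleigh--Ritz, an upper bound for $\sum_{k=1}^{n-1} \lambda_k|_T$ equal to $\sum_{k=1}^{n-1} \int_E \langle L^{-1} L^{-T} \nabla \phi_k^E, \nabla \phi_k^E \rangle\,dy$, after normalising by the (diagonal) $L^2$ Gram matrix. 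Writing $L^{-1} L^{-T} = \alpha I + M$ with $\operatorname{tr} M = 0$ and exploiting the $D_3$ symmetry of $E$ to average out the anisotropic part $M$, this bound collapses to $\alpha \sum_{k=1}^{n-1} \lambda_k|_E$. Choosing $L$ to minimise $\alpha \cdot D_T^2$ subject to $L(E) = T$ should then produce a sharp bound whose excess over the equilateral value $\sum_{k=1}^{n-1} \lambda_k D^2|_E$ is controlled by a geometric shape distance between $T$ and $E$.

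The main obstacle, and the reason the conjecture has not already been proved, is that no unconditional reverse of Theorem~\ref{th:trace} can hold: the excerpt notes that $(\lambda_1+\lambda_2)A$ is \emph{not} minimized by the equilateral, so no matching upper bound exists in area normalization. Any proof must therefore use the diameter constraint essentially, and must balance the upper-bound defect against a \emph{quantitative} stability version of Theorem~\ref{th:trace} of the form
\[
\sum_{k=1}^n \lambda_k D^2 \big|_T - \sum_{k=1}^n \lambda_k D^2 \big|_E \,\geq\, c_n\, \delta(T,E)^2
\]
for some shape defect $\delta$ and constant $c_n > 0$. Establishing such a stability bound and showing it dominates the transplantation error is where I expect the bulk of the difficulty to lie, since the ``Method of the Unknown Trial Function'' as presently developed produces only the non-strict inequality. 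A complementary fall-back strategy for large $n$ would be Weyl-type asymptotics combined with the isodiametric inequality $D^2/A \geq 4\sqrt{3}$ on triangles, which formally forces the conjecture in the limit $n \to \infty$; quantifying the Weyl remainder uniformly over the shape of $T$ would reduce the problem to a (presumably finite) explicit check.
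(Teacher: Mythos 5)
The statement you have been asked to prove is Conjecture~\ref{co:alltones}, and the paper does not prove it: it is posed as an open problem, with only the cases $n=1,2,3$ established (Theorems~\ref{th:trace} and \ref{th:secondtone}). So there is no proof in the paper to compare against, and what you have written is a research plan, not a proof: its two load-bearing ingredients --- a quantitative stability inequality for $\Lambda_n D^2$ and the claim that it dominates the transplantation defect --- are both left unestablished.

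More seriously, the central difference-of-bounds step appears to fail already at first order near the equilateral. Reduce to $T=T(0,b)$ with $b^2=3+\e$. The Method of the Unknown Trial Function (Lemma~\ref{lemtrace} with $\gamma_n\approx 1/2$, the value forced at the equilateral by symmetry) yields a lower-bound excess of roughly $\tfrac{\e}{12}\,\Lambda_nD^2|_E$, and since the trial-function inequality is saturated at $b=\sqrt3$ one cannot expect the true excess to be of larger order in $\e$. Transplanting the equilateral eigenfunctions to $T$ gives an upper bound on $\Lambda_{n-1}D^2|_T$ whose excess is, by the same computation run in reverse, roughly $\tfrac{5\e}{12}\,\Lambda_{n-1}D^2|_E$; and there is no freedom to optimize $L$, since the affine map $E\to T$ is determined up to the symmetries of $E$, so $\alpha$ is fixed by the shape of $T$. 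Subtracting, your scheme gives at best $\lambda_nD^2|_T\gtrsim\lambda_nD^2|_E+\tfrac{\e}{12}\,(\Lambda_n-5\Lambda_{n-1})D^2|_E$, and $\Lambda_n<5\Lambda_{n-1}$ for every $n\geq2$, so the estimate degrades rather than improves as $T$ leaves the equilateral. This is precisely why the paper's $\lambda_2$ argument does not transplant for the subtracted term: it uses P\'olya's much sharper upper bound \eqref{polyaup} on $\lambda_1$ alone, plus a numerically assisted Moler--Payne computation (Lemma~\ref{evalCh}) for one auxiliary triangle, ingredients with no analogue for general $n$. Secondary issues: the $D_3$-averaging of the traceless part $M$ is valid only when the first $n-1$ eigenfunctions fill out complete eigenspaces of $E$ (most eigenvalues of $E$ are degenerate, since $\sigma_{m,n}=\sigma_{n,m}$); the Weyl fall-back is not uniform over degenerating subequilateral triangles, so it does not reduce the conjecture to a finite check; and the isodiametric constant for triangles is $D^2/A\geq 4/\sqrt3$, not $4\sqrt3$.
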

Among rectangles the analogous conjecture is false: the square is not always the minimizer, by the following explicit calculation. The rectangle with sides of length $\cos \phi$ and $\sin \phi$ has diameter $D=1$, and eigenvalues $\lambda_1 = \pi^2(\sec^2 \phi + \csc^2 \phi)$ and $\lambda_2 = \pi^2(2^2\sec^2 \phi + \csc^2 \phi)$ when $0<\phi \leq \pi/4$. One computes that $\lambda_2$ is minimal for some $\phi < \pi/4$, in other words, for some non-square rectangle, and similarly for the sum $\lambda_1+\lambda_2$. Thus the theorems in this paper do not adapt to rectangles.

Nonetheless, our work on triangles tends to support a conjecture for general domains.
\begin{conjecture} \label{co:trace}
The disk minimizes the sum of the first $n \geq 1$ eigenvalues of the Dirichlet Laplacian, among all plane domains of given diameter. That is, $(\lambda_1 + \cdots + \lambda_n) D^2$ is minimal when the domain is a disk.
\end{conjecture}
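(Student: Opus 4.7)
For $n=1$, the plan is to combine the Rayleigh--Faber--Krahn inequality $\lambda_1 A \geq \pi j_{0,1}^2$ with the classical isodiametric inequality $A \leq \pi D^2/4$, both of which are saturated by the disk; multiplying the two yields $\lambda_1 D^2 \geq 4 j_{0,1}^2$, exactly the disk value. For $n \geq 2$ this decoupling fails. The Krahn--Szeg\H{o} theorem gives $\lambda_2 A \geq 2\pi j_{0,1}^2$ with equality on two disjoint congruent disks, so adding Rayleigh--Faber--Krahn and applying the isodiametric inequality yields only $(\lambda_1+\lambda_2) D^2 \geq 12 j_{0,1}^2$, strictly below the disk value $4(j_{0,1}^2 + j_{1,1}^2)$. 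Since the equality cases (one disk versus two) are incompatible, no additive combination of known area-based inequalities seems to furnish a diameter-sharp bound for $n \geq 2$.

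The more promising plan is to adapt the Method of the Unknown Trial Function of this paper. Given eigenfunctions $u_1,\ldots,u_n$ of an arbitrary plane domain $\Omega$ of diameter $D$, one would seek a transplantation $\Phi$ producing trial functions $v_k$ on the reference disk $B_D$ that remain orthonormal in $L^2(B_D)$ and satisfy $\sum_{k=1}^n \int_{B_D}|\nabla v_k|^2 \leq \sum_{k=1}^n \int_\Omega |\nabla u_k|^2$. The min-max principle would then give $\sum_{k=1}^n \lambda_k(B_D) \leq \sum_{k=1}^n \lambda_k(\Omega)$, with equality only at the disk.

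\emph{The main obstacle} is the absence of a natural transplantation between $B_D$ and a general $\Omega$. By Jung's theorem a planar set of diameter $D$ fits inside a disk of radius $D/\sqrt{3}$, not $D/2$, so typically $\Omega \not\subset B_D$, and the linear triangle-to-triangle maps that powered the arguments for Theorem~\ref{th:trace} have no obvious analogue. The Riemann map, available when $\Omega$ is simply connected, provides conformal invariance of the Dirichlet integral but interacts poorly with the diameter. A continuous symmetrization that simultaneously decreases each partial sum $\lambda_1+\cdots+\lambda_n$ and monotonically shrinks the diameter would also settle the conjecture; however, Schwarz and Steiner symmetrizations fail on both counts. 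A shape-derivative calculation does confirm that the disk is a critical point of $\Omega \mapsto (\lambda_1+\cdots+\lambda_n)(\Omega) D(\Omega)^2$, reducing the conjecture to a global stability statement that appears out of reach with current techniques; Theorem~\ref{th:trace} disposes of the triangular subcase and provides supporting evidence, but genuinely new tools seem required in general.
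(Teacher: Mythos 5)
The statement you were asked about is a \emph{conjecture}: the paper offers no proof of it for $n\geq 2$, only the observation that the case $n=1$ follows from Faber--Krahn combined with the isodiametric inequality, and that the full conjecture holds in the sub-class of ellipses by domain monotonicity (an ellipse of diameter $D$ contains\ldots rather, is contained in\ldots a disk of the same diameter). Your treatment of $n=1$ is exactly the paper's: $\lambda_1 A \geq \pi j_{0,1}^2$ times $A \leq \pi D^2/4$ gives $\lambda_1 D^2 \geq 4 j_{0,1}^2$ with equality for the disk, and your arithmetic showing that adding Krahn--Szeg\H{o} yields only $12 j_{0,1}^2 < 4(j_{0,1}^2+j_{1,1}^2)$ for $n=2$ is correct and correctly diagnosed (the equality cases, one disk versus two, are incompatible). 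Your account of why the Method of the Unknown Trial Function does not transfer --- no canonical linear transplantation onto the disk, Jung's theorem giving radius $D/\sqrt{3}$ rather than $D/2$, symmetrizations not controlling the diameter --- is a fair description of the state of the problem and is consistent with the paper, which presents its triangle results (Theorem~\ref{th:trace}) only as supporting evidence. To be clear: what you have written is not, and does not claim to be, a proof for $n\geq 2$, and none exists in the paper either; the one assertion you should either justify or soften is the claim that a shape-derivative calculation ``confirms'' the disk is a critical point of $(\lambda_1+\cdots+\lambda_n)D^2$, since for $n\geq 2$ the disk's eigenvalues are degenerate and the diameter functional is not smooth, so that computation requires care (one-sided derivatives of the eigenvalue cluster and of $D$ under perturbation). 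You might also add the paper's ellipse observation as further evidence, since it is the only case beyond $n=1$ where the conjecture is actually verified for a continuum of domains.
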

Perhaps even the individual eigenvalues are minimal for the disk.
\begin{conjecture} \label{co:secondtone}
The disk minimizes each eigenvalue of the Dirichlet Laplacian, among all plane domains of given diameter. That is, $\lambda_n D^2$ is minimal when the domain is a disk, for each $n$.
\end{conjecture}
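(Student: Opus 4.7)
Conjecture \ref{co:secondtone} splits into the classical case $n=1$, which seems accessible, and the case $n\ge 2$, which is genuinely open.

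For $n=1$ the plan is to combine the Rayleigh--Faber--Krahn inequality with the isodiametric inequality. Let $\Omega^*$ denote the disk of the same area as $\Omega$. Rayleigh--Faber--Krahn gives $\lambda_1(\Omega) \ge \lambda_1(\Omega^*)$, and the isodiametric inequality $|\Omega| \le \pi D^2/4$ says that $\Omega^*$ has diameter at most $D$; by scale invariance of $\lambda_1 D^2$ on disks, $\lambda_1(\Omega^*) \ge \lambda_1(B_D)$, where $B_D$ denotes the disk of diameter $D$. Chaining these gives $\lambda_1(\Omega) D^2 \ge 4 j_{0,1}^2$, with equality only for disks.

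For $n\ge 2$ this chain collapses, because Krahn--Szego identifies the area-normalized minimizer of $\lambda_2$ as a disjoint pair of equal disks rather than a single disk, so one cannot reduce to the area constraint. A natural substitute, directly inspired by the Method of the Unknown Trial Function developed in this paper, is to transplant the first $n$ unknown eigenfunctions of an arbitrary $\Omega$ into $B_D$ and use the resulting family as trial functions in the min--max characterization of $\lambda_1(B_D)+\cdots+\lambda_n(B_D)$. The main obstacle is that there is no canonical, controlled map $\Omega \to B_D$: by Jung's theorem $\Omega$ only embeds into a ball of radius $D/\sqrt 3$, which is strictly larger than $B_D$, and two domains of the same diameter can have radically different shapes. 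Unlike the triangle case, where affine maps among equilateral and right triangles preserve enough geometric structure for the transplantation distortions to be interpolated away, here there is no obvious family of admissible maps over which to average.

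A more modest first step would be Conjecture \ref{co:trace} on the trace, which the triangle results of this paper suggest is strictly easier than the individual eigenvalue statement. One might hope to establish it by an averaged or smeared transplantation, perhaps after first reducing to convex bodies (passing to the convex hull of $\Omega$ preserves $D$ and decreases every $\lambda_n$). If such a trace bound could be proved, the strategy for $\lambda_2$ on triangles in Section \ref{sec:secondtone_proof}---writing $\lambda_n = (\lambda_1+\cdots+\lambda_n) - (\lambda_1+\cdots+\lambda_{n-1})$ and combining with sharp upper bounds on the lower eigenvalues---would point the way to bounds on individual $\lambda_n$. Absent such a development, I expect the full conjecture to require a substantially new ingredient, for instance a diameter-adapted symmetrization valid for sign-changing functions.
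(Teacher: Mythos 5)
This statement is an open conjecture, and the paper offers no proof beyond the remark that the case $n=1$ follows from the Faber--Krahn theorem combined with the isodiametric inequality; your argument for $n=1$ is correct and is exactly that chain, and you rightly identify the cases $n\ge 2$ as open. Your discussion of the obstacles (Krahn--Szeg\H{o} giving two disks for $\lambda_2$, the absence of a controlled transplantation map onto $B_D$) is sound and consistent with the paper's framing, so there is nothing to correct.
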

For $n=1$, the last two Conjectures are true by the Faber--Krahn theorem that $\lambda_1 A$ is minimal for the disk together with the isodiametric theorem that $D^2/A$ is minimal for the disk.

For $n=2$, the minimality of $\lambda_2 D^2$ for the disk was suggested already by Bucur, Buttazzo and Henrot \cite{BBH09}, who conjectured that
\begin{equation} \label{BBHconj}
\lambda_2 D^2 \geq 4 j_{1,1}^2
\end{equation}
for all bounded plane domains (with equality for the disk). Our paper is motivated by their conjecture. Notice Theorem~\ref{th:secondtone} improves considerably on the conjectured \eqref{BBHconj}, for triangles, because $7 \cdot 16 \pi^2/9 \simeq 123$ is greater than $4j_{1,1}^2 \simeq 59$.

For all $n$, Conjectures~\ref{co:trace} and \ref{co:secondtone} are true in the sub-class of ellipses, because an ellipse of diameter $D$ lies inside a disk of the same diameter, and the eigenvalues of the disk are lower by domain monotonicity.

Summing up, then, the main contributions of the paper are that it develops a new method for proving geometrically sharp lower bounds on eigenvalues, that it handles eigenvalue sums of arbitrary length, and that it supports new and existing conjectures on general domains by investigating the interesting class of triangular domains.

\medskip
We contribute also some numerical observations about symmetry properties of second eigenfunctions of isosceles triangles. Given an isosceles triangle with line of symmetry ${\mathcal L}$, we call an eigenvalue \emph{(anti)symmetric} if its corresponding eigenfunction is (anti)symmetric in ${\mathcal L}$. For example, the fundamental tone of an isosceles triangle is symmetric, since the fundamental mode is symmetric (by uniqueness).
\begin{definition}\rm
The \emph{aperture} of an isosceles triangle is the angle between
its two equal sides. Call a triangle \emph{subequilateral} if it is isosceles with
aperture less than $\pi/3$, and \emph{superequilateral} if it is
isosceles with aperture greater than $\pi/3$.
\end{definition}
\begin{observation} \label{subeq} \ 

\noindent The second mode of a subequilateral triangle is symmetric 
 (Figure~\ref{symmfig}(a)).

\noindent The second mode of a superequilateral triangle is antisymmetric 
 (Figure~\ref{symmfig}(b)).
\end{observation}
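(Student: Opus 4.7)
The plan is to reformulate the observation on the half-triangle $H$ cut from $T$ by its axis of symmetry $\mathcal{L}$. By standard decomposition of Dirichlet eigenfunctions on an isosceles triangle into symmetric and antisymmetric parts with respect to $\mathcal{L}$, the symmetric eigenvalues of $T$ coincide with the eigenvalues $\mu_k^{\mathrm{mix}}(\theta)$ of $H$ under the mixed conditions (Neumann on $\mathcal{L}$, Dirichlet on the other two sides), while the antisymmetric eigenvalues coincide with the pure Dirichlet eigenvalues $\mu_k^{\mathrm{D}}(\theta)$ of $H$; here $\theta$ is the aperture. Symmetry of the ground state gives $\lambda_1(T) = \mu_1^{\mathrm{mix}}(\theta)$, while
\[
\lambda_2(T) = \min\bigl(\mu_2^{\mathrm{mix}}(\theta),\, \mu_1^{\mathrm{D}}(\theta)\bigr).
\]
The second mode is therefore symmetric iff $\mu_2^{\mathrm{mix}}(\theta) < \mu_1^{\mathrm{D}}(\theta)$ and antisymmetric in the reverse case. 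The observation reduces to a sign claim for
\[
F(\theta) := \mu_1^{\mathrm{D}}(\theta) - \mu_2^{\mathrm{mix}}(\theta): \qquad F>0 \text{ for } \theta<\pi/3, \quad F<0 \text{ for } \theta>\pi/3.
\]

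Next, the equilateral triangle is the natural transition point. At $\theta = \pi/3$ the explicit spectrum recalled in Appendix~\ref{equilateral} has $\lambda_2 = \lambda_3$, with a two-dimensional eigenspace spanned by one symmetric and one antisymmetric eigenfunction; hence $F(\pi/3) = 0$. To obtain the observation locally, I would apply Hadamard's shape-derivative formula to compute $F'(\pi/3)$, parametrizing the family of triangles by opening the aperture $\theta$ (with, say, the base fixed). The two relevant eigenfunctions on the resulting $30$-$60$-$90$ half-triangle are known explicitly, and the boundary integrals entering Hadamard's formula (concentrated on the moving hypotenuse, since $\mathcal{L}$ is fixed) should yield $F'(\pi/3) < 0$. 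This would establish the observation in a full neighborhood of the equilateral aperture.

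The main obstacle is global sign-control of $F$ away from $\pi/3$. Standard domain monotonicity does not directly compare $\mu_1^{\mathrm{D}}(\theta)$ with $\mu_2^{\mathrm{mix}}(\theta)$, because deforming the aperture simultaneously changes the two legs of $H$ in opposite directions and so $H$ itself is neither growing nor shrinking. A workable strategy would combine two ingredients: (i) asymptotic analysis in the degenerate limits $\theta \to 0^+$ (needle) and $\theta \to \pi^-$ (flat), showing for example that $\mu_1^{\mathrm{D}}$ blows up much faster than $\mu_2^{\mathrm{mix}}$ in the needle limit, since the latter can still realize low energy via a longitudinal mode using the Neumann side, giving $F>0$; and (ii) a continuity argument, tracking the nodal set of the second mixed eigenfunction of $H$ as $\theta$ varies, to exclude any additional crossings of $\mu_2^{\mathrm{mix}}$ and $\mu_1^{\mathrm{D}}$ in the intermediate regimes. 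Making (ii) rigorous, rather than verifying it by plotting eigenvalues numerically, is what I would expect to be the truly hard step, and is presumably what keeps the present statement at the level of a ``numerical observation.''
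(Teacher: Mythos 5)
The first thing to note is that the paper offers no proof of this statement at all: it is labelled a \emph{Numerical Observation}, it rests on the plot of the lowest symmetric and antisymmetric eigenvalues in Figure~\ref{figl}, and the authors state explicitly that they have not found a rigorous proof. So there is no ``paper proof'' to compare against. Your reduction is nonetheless the correct framework, and it is implicitly the one the paper's numerics use: splitting the spectrum of the isosceles triangle into symmetric eigenvalues (mixed Neumann--Dirichlet problem on the half-triangle) and antisymmetric eigenvalues (pure Dirichlet on the half-triangle), noting that the fundamental mode is symmetric, and reducing the observation to the sign of $\mu_1^{\mathrm D}(\theta)-\mu_2^{\mathrm{mix}}(\theta)$, with the crossing at $\theta=\pi/3$ forced by the degeneracy $\lambda_2=\lambda_3$ of the equilateral triangle. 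This is exactly what the curves $\lambda_a(\alpha)$ and $\lambda_s(\alpha)$ in Figure~\ref{figl} display.

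The genuine gap is that your proposal is a research plan rather than a proof, and the decisive step is the one you yourself flag as open: excluding additional crossings of $\mu_1^{\mathrm D}$ and $\mu_2^{\mathrm{mix}}$ away from $\theta=\pi/3$. Neither the local Hadamard computation at $\pi/3$ (which you have not carried out, and which only controls a neighborhood of the equilateral aperture) nor the degenerate-limit asymptotics at $\theta\to0^+$ and $\theta\to\pi^-$ closes this; the intermediate regime is precisely where all known techniques fail, and it is why the statement remains an observation. Two further cautions: the two relevant eigenvalue branches are each simple within their own boundary-value problem, so they are individually differentiable in $\theta$, but you should say this explicitly since the full eigenvalue $\lambda_2=\lambda_3$ is degenerate at $\pi/3$ and a naive shape derivative of $\lambda_2(T)$ is not defined there. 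Also, the paper's Section~\ref{sec:isos} does prove rigorous monotonicity statements for $\lambda_1$ and for the lowest antisymmetric eigenvalue $\lambda_a$ separately (Propositions~\ref{prop:fund} and \ref{prop:antisymm}), which are the natural rigorous fragments of your program, but they compare each branch to itself across apertures, not the two branches to each other, and so do not yield the observation.
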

The result is plausible because the oscillation of the second mode should take place in the ``long'' direction of the triangle.
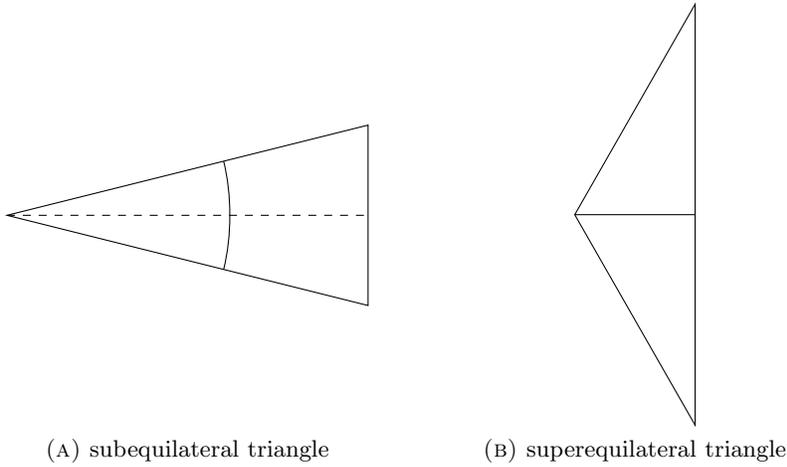
\begin{figure}[t]
  \begin{center}
    \hspace*{\fill}
    \subfloat[subequilateral triangle]{
\begin{tikzpicture}[scale=0.4]
      \path (0,-7) -- (12,7);
      \draw (0,0) -- (12,-3) -- (12,3) -- cycle;
      \clip (0,0) -- (12,-3) -- (12,3) -- cycle;
      \draw[dashed] (0,0) -- (12,0);
      \draw (7.2,-3*7.2/12) .. controls +(3/12*1.1,1.1) and +(3/12*1.1,-1.1) .. (7.2,3*7.2/12);
    \end{tikzpicture}
    }
    \hspace*{\fill}
    \subfloat[superequilateral triangle]{
    \begin{tikzpicture}[scale=0.4]
      \path (0,-7) -- (12,7);
      \draw (4,0) -- (8,7) -- (8,-7) -- cycle;
      \draw (4,0) -- (8,0);
    \end{tikzpicture}
    }
    \hspace*{\fill}
  \end{center}
  \caption{Nodal curve (solid) for the second eigenfunction of an isosceles triangle. The mode satisfies a Dirichlet condition on each solid curve, and a Neumann condition on the dashed line.} \label{symmfig}
\end{figure}
Observation~\ref{subeq} is based on a numerical plot of the lowest symmetric and antisymmetric eigenvalues, in Figure~\ref{figl} later in the paper. We have not found a rigorous proof.

Further properties of the low eigenvalues of isosceles triangles, such as monotonicity properties with respect to the aperture, will be proved in Section~\ref{sec:isos}.

\medskip
The rest of the paper is organized as follows. The next section surveys known estimates on low eigenvalues, especially for the fundamental tone and the spectral gap. Then Section \ref{sectraces} begins the proof of Theorem \ref{th:trace} on eigenvalue sums, and introduces the Method of the Unknown Trial Function. The behavior of eigenvalue sums under linear transformation of the domain is studied in Section~\ref{isec3}. High eigenvalues of equilateral triangles are estimated in Section~\ref{seccomp}. Finally, Section \ref{sec:secondtone_proof} proves Theorem \ref{th:secondtone} on the second eigenvalue.

\section{\bf Literature and related results} \label{literature}

This paper proves lower bounds that are geometrically sharp, on eigenvalue sums of arbitrary length. The only similar results we know are the \emph{upper} bounds in our companion paper \cite{LS10b}, where the triangles are normalized by the ratio $\text{(area)}^3/\text{(moment of inertia)}$, rather than by $\text{(diameter)}^2$, and where equilateral triangles are shown to maximize (rather than minimize) the eigenvalue sums.

Results that are \emph{asymptotically} sharp have received more attention. The P\'{o}lya conjecture $\lambda_n A > 4\pi n$ asserts that the Weyl asymptotic estimate is a strict lower bound on each Dirichlet eigenvalue. It has been proved for tiling domains \cite{P61}, but remains open for general domains for $n \geq 3$; results on product domains are due to Laptev \cite{L97}, and counterexamples under constant magnetic field have been found by Frank, Loss and Weidl \cite{FLW09}. Note that the Berezin--Li--Yau inequality mentioned in the previous section is a ``summed'' version of P\'{o}lya's conjecture, and that it was generalized to homogeneous spaces by Strichartz \cite{S96}.

Eigenvalues of triangular domains have been studied intensively \cite{AF06,AF08,F06,F07,FS10,LS09a,LS10,S07,S10}. The most notable lower bounds on the first Dirichlet eigenvalue are P\'{o}lya and Szeg\H{o}'s inequality \cite[Note A]{PS51} of Faber--Krahn type that
\[
\lambda_1 A \geq \frac{4\pi^2}{\sqrt{3}}
\]
with equality for the equilateral triangle, and Makai's result \cite{M62} that
\[
\lambda_1 \frac{A^2}{L^2} \geq \frac{\pi^2}{16}
\]
with equality for the degenerate acute isosceles triangle. This last inequality can be interpreted in terms of inradius normalization, since the inradius is proportional to $A/L$, for triangles.

Another interesting functional is the spectral gap $\lambda_2-\lambda_1$, which was conjectured by van den Berg \cite{vdB} to be minimal for the degenerate rectangular box, among all convex domains of diameter $D$, in other words, that
\[
(\lambda_2-\lambda_1) D^2 > 3\pi^2 .
\]
A proof was presented recently by Andrews and Clutterbuck \cite{AC10}. Among triangles, the gap minimizer is conjectured by Antunes and Freitas \cite{AF08} to be the equilateral triangle (and not a degenerate triangle). Lu and Rowlett \cite{LR10} have announced that a proof will appear in a forthcoming paper. An alternative approach for triangles might be to modify our proof of Theorem~\ref{th:secondtone}, using that $\lambda_2-\lambda_1=(\lambda_1+\lambda_2)-2\lambda_1$. We have not succeeded with this approach, unfortunately.

We investigated Neumann eigenvalues (rather than Dirichlet) in two recent works \cite{LS09a,LS10}. The first of those papers maximized the low Neumann eigenvalues of triangles, under perimeter or area normalization, with the maximizer being
equilateral. The second paper minimized the second Neumann eigenvalue (the first nonzero eigenvalue) under diameter or perimeter normalization, with the minimizer being degenerate acute isosceles; a consequence is a sharp Poincar\'{e} inequality for triangles.

For broad surveys of isoperimetric eigenvalue inequalities, see the paper by Ashbaugh \cite{A99}, and the monographs of Bandle \cite{B79}, Henrot \cite{He06}, Kesavan
\cite{K06} and P\'{o}lya--Szeg\H{o} \cite{PS51}.

\section{\bf Proof of Theorem~\ref{th:trace}: the Method of the Unknown Trial Function}\label{sectraces}

We begin with some notation. Write
\[
\Lambda_n = \lambda_1 + \cdots + \lambda_n
\]
for the sum of the first $n$ eigenvalues. Define
\[
T(a,b) = \text{triangle having vertices at $(-1,0), (1,0)$ and $(a,b)$,}
\]
where $a \in \R$ and $b>0$. Choosing $a=0$ gives an isosceles triangle; it is subequilateral if in addition $b>\sqrt{3}$.

The theorem will be proved in three steps.

\medskip
\noindent Step 1 --- Reduction to isosceles triangles.

Every triangle is contained in a subequilateral triangle with the same diameter, simply by extending the second-longest side to be as long as the longest side. The Dirichlet eigenvalues of this subequilateral triangle are lower than those of the original triangle, by domain monotonicity. By dilating, translating and rotating, we may suppose the subequilateral triangle is $T(0,b)$ for some $b > \sqrt{3}$. Thus it suffices to prove that the theorem holds with strict inequality for $T=T(0,b)$.

\medskip
\noindent Step 2 --- Method of the Unknown Trial Function.

Assume $b>\sqrt{3}$. Define three special triangles:
\begin{align*}
E & = T(0,\sqrt{3}) = \text{equilateral triangle} , \\
F_\pm & = T(\pm 1,2\sqrt{3}) = \text{30-60-90${}^\circ$ right triangle.}
\end{align*}
See Figure~\ref{fig:lineartrans}.

\begin{proposition} \label{unknown}
If $b > \sqrt{3}$ then
\[
\left. \Lambda_n D^2 \right|_{T(0,b)} > \min \left\{ \left. \Lambda_n D^2 \right|_E , \, \frac{6}{11} \! \left. \Lambda_n D^2 \right|_{F_\pm} \right\}
\]
for each $n \geq 1$.
\end{proposition}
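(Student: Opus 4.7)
The plan is to apply the Method of the Unknown Trial Function: transplant the first $n$ Dirichlet eigenfunctions of $T=T(0,b)$ onto the two reference triangles $E$ and $F_+$ via explicit linear maps (noting $F_-\cong F_+$, so $F_\pm$ is covered at once), and then combine the two resulting Rayleigh--Ritz inequalities with weights chosen so that the unknown directional energies on $T$ recombine into $\Lambda_n(T)$ itself.

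First I fix $L^2(T)$-orthonormal eigenfunctions $u_1,\dots,u_n$ of $T$. Since $T$ has reflection symmetry across the $y$-axis, I may choose each $u_j$ to have definite parity in $x$; this forces $\int_T u_{j,x}u_{j,y}=0$ for every $j$. Set $\mathcal{A}:=\sum_j\int_T u_{j,x}^2$ and $\mathcal{B}:=\sum_j\int_T u_{j,y}^2$, so $\mathcal{A}+\mathcal{B}=\Lambda_n(T)$. Now let $L_E:E\to T$ and $L_{F_+}:F_+\to T$ be the linear maps that fix the common base $(\pm1,0)$ and send the apex of the source triangle to $(0,b)$; these are invertible, so $\{u_j\circ L_E\}$ and $\{u_j\circ L_{F_+}\}$ span $n$-dimensional trial subspaces of $H_0^1(E)$ and $H_0^1(F_+)$. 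Feeding them into the Rayleigh--Ritz sum bound $\sum_j\lambda_j(\Omega)\leq\operatorname{tr}(KG^{-1})$ and computing $LL^T$ for the two maps, the change-of-variables calculation (in which the off-diagonal term of $L_{F_+}L_{F_+}^T$ contributes zero by parity) yields
\[
\Lambda_n(E)\leq\mathcal{A}+\tfrac{b^2}{3}\mathcal{B},\qquad \Lambda_n(F_\pm)\leq\tfrac{13}{12}\mathcal{A}+\tfrac{b^2}{12}\mathcal{B}.
\]

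Next I interpolate to eliminate the unknown ratio $\mathcal{A}/\mathcal{B}$. For every $b^2\in[3,13]$ I pick $p,q\geq 0$ (not both zero) with $(4p+q)b^2=12p+13q$; this equation is exactly what makes the coefficients of $\mathcal{A}$ and $\mathcal{B}$ in $p\cdot(\text{first})+q\cdot(\text{second})$ coincide, giving
\[
\Lambda_n(T)\geq\frac{p\,\Lambda_n(E)+q\,\Lambda_n(F_\pm)}{p+13q/12}.
\]
Multiplying by $D_T^2=1+b^2$ and regrouping in terms of $D_E^2\Lambda_n(E)=4\Lambda_n(E)$ and $\tfrac{6}{11}D_{F_\pm}^2\Lambda_n(F_\pm)=\tfrac{96}{11}\Lambda_n(F_\pm)$ exhibits the right-hand side as $\alpha\cdot 4\Lambda_n(E)+\beta\cdot\tfrac{96}{11}\Lambda_n(F_\pm)$ for explicit $\alpha,\beta\geq 0$. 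For the complementary range $b^2>13$ the second inequality alone gives $\Lambda_n(T)\geq(12/b^2)\Lambda_n(F_\pm)$, and the elementary check $12(1+b^2)/b^2>96/11$ closes that case. Strictness for $b>\sqrt 3$ follows because no linear map turns the transplanted basis into actual eigenfunctions of $E$ or $F_\pm$, so the Rayleigh--Ritz bound is strict.

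The main obstacle is verifying $\alpha+\beta\geq 1$, which is what promotes the convex combination into a lower bound by the \emph{minimum}. Substituting $1+b^2=(16p+14q)/(4p+q)$ reduces this to the algebraic identity
\[
(1+b^2)(24p+11q)-(96p+104q)=\frac{50q^2}{4p+q}\geq 0,
\]
which is sharp, attained only when $q=0$ (i.e.\ $b=\sqrt 3$). The specific factor $6/11$ in the statement is precisely what is needed to force this nonnegativity; any larger constant in its place would break the interpolation, so the proof must be calibrated to exactly this number.
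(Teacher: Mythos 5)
Your proof is correct. It is the paper's Method of the Unknown Trial Function with the same two target triangles $E$ and $F_\pm$, the same two transplantation inequalities, and the same constant $6/11$; the difference lies in how the unknown energy split is eliminated. The paper sets $\gamma_n=\mathcal{B}/(\mathcal{A}+\mathcal{B})$ and argues by dichotomy: if $\gamma_n<3/4$ the transplantation to $E$ alone yields $\Lambda_nD^2|_T>\Lambda_nD^2|_E$, while if $\gamma_n\geq 3/4$ the transplantation to $F_\pm$ alone yields the other alternative, the decisive inequality being $b^2+50/(11-8\gamma_n)>13$. You instead form a $b$-dependent weighted combination $p\cdot(E\text{-bound})+q\cdot(F\text{-bound})$ calibrated so that $\mathcal{A}$ and $\mathcal{B}$ recombine into $\Lambda_n(T)$, and check $\alpha+\beta\geq 1$ via the identity $(16p+14q)(24p+11q)-(96p+104q)(4p+q)=50q^2$, which I verified; the two arguments are dual formulations of the same linear fact (note the same ``$50$'' appearing in both). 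Two minor further differences: you annihilate the cross term $\sum_j\int_T u_{j,x}u_{j,y}$ by choosing eigenfunctions of definite parity in $x$ (legitimate, since $T(0,b)$ is symmetric about the $y$-axis, provided you note that within a degenerate eigenspace such a basis can always be chosen), whereas the paper exploits that $F_+$ and $F_-$ are isometric so that at least one sign choice makes the cross term harmless; and you need the separate, trivial, case $b^2>13$, which the paper's dichotomy absorbs automatically. One small correction: your closing appeal to strictness of the Rayleigh--Ritz bound is both unjustified as stated and unnecessary --- strictness already follows from $\alpha+\beta>1$, which holds whenever $q>0$, i.e.\ for every $b>\sqrt{3}$ in the range $b^2\leq 13$, and from $12(1+b^2)/b^2>96/11$ in the range $b^2>13$.
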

We give the proof in the next section. The idea is to estimate the eigenvalues of the subequilateral triangle $T(0,b)$ by linearly transplanting its eigenfunctions to provide trial functions on the equilateral and right triangles, whose eigenvalues are known explicitly. Figure~\ref{fig:lineartrans} indicates the linear maps. The figure suggests that we may regard the subequilateral triangle as ``interpolating'' between the equilateral and right triangles, in some sense.

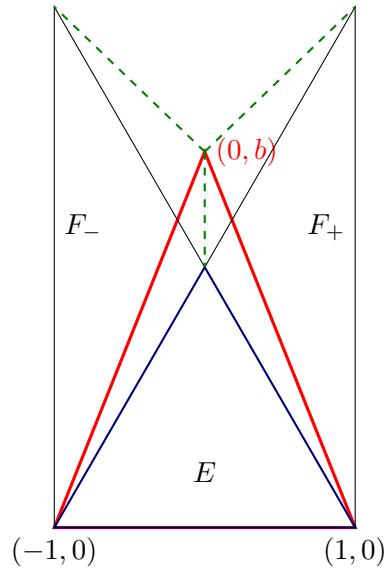
\begin{figure}[ht]
  \begin{center}
	\begin{tikzpicture}[scale=2]
	  \draw[very thick,red] (-1,0) -- (1,0) -- (0,2.5) node [right] {$(0,b)$} -- cycle;
	  \draw[thick, blue] (-1,0) -- (1,0) -- (0,{sqrt(3)}) -- cycle;
	  \draw (0,1/2) node [below] {$E$} ;
	  \draw (-1,0) node [below] {$(-1,0)$} -- (1,0) -- (-1,{2*sqrt(3)}) -- cycle;
	  \draw (-1,2) node [right] {$F_-$} ;
	  \draw (1,0) node [below] {$(1,0)$} -- (-1,0) -- (1,{2*sqrt(3)}) -- cycle;
	  \draw[green!50!black,dashed,thick] (0,2.5) -- (-1,{2*sqrt(3)}) ;
	  \draw (1,2) node [left] {$F_+$} ;
	  \draw[green!50!black,dashed,thick] (0,2.5) -- (1,{2*sqrt(3)}) ;
	  \draw[green!50!black,dashed,thick] (0,2.5) -- (0,{sqrt(3)}) ;
	\end{tikzpicture}
  \end{center}
  \caption{Linear maps to the subequilateral triangle $T(0,b)$, from the equilateral triangle $E$ and right triangles $F_\pm$.}
  \label{fig:lineartrans}
\end{figure}

We call this approach to proving lower bounds the \emph{Method of the Unknown Trial Function}, because the eigenfunctions of the subequilateral triangle are unknown and it is these eigenfunctions that generate the trial functions for the (known) eigenvalues of the equilateral and right triangles. This method contrasts with the usual approach to proving an \emph{upper} bound, where one puts a known trial function into the Rayleigh quotient in order to estimate the unknown eigenvalue.

\medskip
\noindent Step 3 --- Comparing the right and equilateral triangles.

To complete the proof, we combine Proposition~\ref{unknown} with:
\begin{lemma} \label{compequilateral}
$\frac{6}{11} \! \left. \Lambda_n D^2 \right|_{F_\pm} > \left. \Lambda_n D^2 \right|_E$ for each $n \geq 1$.
\end{lemma}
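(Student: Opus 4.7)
The plan is to exploit the fact that both spectra are known explicitly, via Lam\'e's formula and the reflection principle. First I would record the diameters: $E$ has side $2$ so $D^2|_E=4$, whereas $F_\pm$ is $30$--$60$--$90$ with legs $2$ and $2\sqrt 3$ and hypotenuse $4$, giving $D^2|_{F_\pm}=16$. Substituting reduces the lemma to
\[
\Lambda_n(F_-) > \tfrac{11}{24}\Lambda_n(E) \qquad (n \geq 1).
\]
Reflecting $F_-$ across its longer leg produces the equilateral triangle $\tilde E$ of side $4$. By the reflection principle its Dirichlet spectrum splits into symmetric and antisymmetric parts, and the antisymmetric part coincides with $\operatorname{spec}(F_-)$. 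Combined with Lam\'e's formula (Appendix~\ref{equilateral}), this yields the enumerations
\[
\operatorname{spec}(E) = \Bigl\{\tfrac{4\pi^2}{9}(m^2+mn+n^2): m,n\in\N\Bigr\}, \qquad \operatorname{spec}(F_-) = \Bigl\{\tfrac{\pi^2}{9}(m^2+mn+n^2): m>n\geq 1\Bigr\},
\]
where on the left each pair $m\neq n$ contributes a doubled eigenvalue (one of each parity under the median reflection) and $m=n$ a simple one, while on the right each $m>n$ contributes a simple eigenvalue.

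With both spectra in hand I would proceed in two stages. For small $n$, up to an explicit cutoff $n_0$, I would verify the inequality by direct enumeration of the lowest values $3,7,12,13,19,21,\dots$ of $m^2+mn+n^2$. The tightest small-$n$ case is $n=2$: one finds $\Lambda_2(E)=\tfrac{40\pi^2}{9}$ and $\Lambda_2(F_-)=\tfrac{20\pi^2}{9}$, so the ratio equals exactly $\tfrac{1}{2}$, safely above $\tfrac{11}{24}$. For $n\geq n_0$ I would argue asymptotically: since the antisymmetric Eisenstein pairs $\{m>n\geq 1\}$ are exactly half of the full set $\{m,n\geq 1\}$ up to the diagonal $m=n$ (of lower order), a Weyl-type count on the quadratic form $m^2+mn+n^2\leq T$ yields
\[
\Lambda_n(F_-) = \tfrac{1}{2}\Lambda_n(E) + O(n^{3/2}),
\]
while $\Lambda_n(E)\sim c\,n^2$. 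Since $\tfrac{1}{2}>\tfrac{11}{24}$, this bound suffices for all $n\geq n_0$ once the implicit constants are made explicit, which is exactly the content of the high-eigenvalue estimates promised in Section~\ref{seccomp}.

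The main obstacle is that the limiting ratio $\tfrac{1}{2}$ sits only $\tfrac{1}{24}$ above the threshold $\tfrac{11}{24}$, so the Weyl-type error term has to be sharp enough to reach moderate $n$, and the finite direct check must be extended far enough to overlap the asymptotic regime. The technical heart of the argument is therefore the lattice-point count on the Eisenstein form $m^2+mn+n^2\leq T$ with an effective boundary correction; once that count (and its eigenvalue-weighted version) is in place, the lemma follows by combining a finite case-check with a uniform asymptotic bound.
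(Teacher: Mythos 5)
Your proposal follows essentially the same route as the paper: identify $\operatorname{spec}(F_\pm)$ with the antisymmetric spectrum of an equilateral triangle via reflection, reduce by scaling to comparing $\Lambda_n^a$ against $\tfrac{11}{6}\Lambda_n$ (your $\tfrac{11}{24}$ threshold in the side-$2$/side-$4$ normalization), and combine a finite verification for small $n$ with an effective lattice-point count on $m^2+mn+n^2\le R^2$ for large $n$. The paper executes exactly this plan with cutoff $n_0=110$, obtaining the termwise bound $\lambda_n^a/\lambda_n>\tfrac{11}{6}$ for $n\ge 110$ from the explicit counting-function estimates of Lemmas~\ref{counting} and~\ref{antisym} and checking $n\le 110$ from the table in Lemma~\ref{explicit}.
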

The proof is in Section~\ref{seccomp}. Note the lemma is plausible for large $n$, because $\lambda_n D^2 \sim 4\pi n D^2/A$ by the Weyl asymptotic and $D^2/A \big\rvert_{F_\pm}=2D^2/A \big\rvert_E$ since the diameter and area of $F_\pm$ are twice as large as for $E$.

\section{\bf Linear transformation of eigenfunctions: proof of Proposition~\ref{unknown}} \label{isec3}

The Rayleigh Principle says the fundamental tone of a bounded plane domain $\Omega$ is
\[
\lambda_1 = \min_{v \in H^1_0(\Omega) \setminus \{ 0 \} } R[v]
\]
where
\[
R[v] = \frac{\int_\Omega |\nabla v|^2 \, dA}{\int_\Omega v^2 \, dA}
\]
is the \emph{Rayleigh quotient}. Similarly, the sum of the first $n$ eigenvalues is
\begin{align*}
\Lambda_n & = \lambda_1 + \cdots + \lambda_n \\
& = \min \Big\{ R[v_1] + \cdots + R[v_n] : v_j \in H^1_0(\Omega) \setminus \{ 0 \} , \text{$\langle v_j , v_k \rangle_{L^2}=0$ when $j \neq k$} \Big\} .
\end{align*}

We will transplant eigenfunctions from one triangle to another, to build trial functions for the Rayleigh quotients.

Recall the triangle $T(a,b)$ having vertices at $(-1,0), (1,0)$ and $(a,b)$, where $b>0$. Denote the eigenvalues of this triangle by $\lambda_j(a,b)$, and write $u_j$ for the corresponding $L^2$-orthonormal eigenfunctions. Define $\Lambda_n(a,b) = \sum_{j=1}^n \lambda_j(a,b)$.
\begin{lemma}[Linear transformation and eigenvalue sums] \label{lemtrace}
Let $a,c \in \R$ and $b,d>0$. Take $C > 0$ and $n \geq 1$.

The inequality
\[
\Lambda_n(a,b) > C \Lambda_n(c,d)
\]
holds if
\begin{equation} \label{ineq}
\frac{1}{d^2} \Big[ \big( (a-c)^2+d^2 \big) (1-\gamma_n) + 2b(a-c)\delta_n + b^2\gamma_n \Big] < \frac{1}{C},
\end{equation}
where
\[
\gamma_n = \frac{\sum_{j=1}^n \int_{T(a,b)} u_{j,y}^2 \, dA}{\sum_{j=1}^n \int_{T(a,b)} |\nabla u_j|^2 \, dA} \qquad \text{and} \qquad
\delta_n = \frac{\sum_{j=1}^n \int_{T(a,b)} u_{j,x} u_{j,y} \, dA}{\sum_{j=1}^n \int_{T(a,b)} |\nabla u_j|^2 \, dA} .
\]
\end{lemma}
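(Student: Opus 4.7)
The plan is to transplant the unknown $L^2$-orthonormal eigenfunctions $u_1,\dots,u_n$ of $T(a,b)$ onto $T(c,d)$ via the unique affine map taking vertices to vertices, and then bound $\Lambda_n(c,d)$ from above by the resulting Rayleigh sum, thereby bounding $\Lambda_n(a,b)$ from below.

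First I would write down the affine map $\Phi:T(c,d)\to T(a,b)$ that fixes $(\pm1,0)$ and sends $(c,d)$ to $(a,b)$; explicitly $\Phi(x,y)=\bigl(x+\tfrac{a-c}{d}y,\tfrac{b}{d}y\bigr)$, with constant Jacobian $b/d$. Set $v_j=u_j\circ\Phi$ on $T(c,d)$. Since $\Phi$ is a homeomorphism of closed triangles mapping boundary to boundary, $v_j\in H^1_0(T(c,d))$. The change of variables formula gives
\[
\int_{T(c,d)} v_j v_k\,dA=\frac{d}{b}\int_{T(a,b)}u_j u_k\,dA=\frac{d}{b}\,\delta_{jk},
\]
so $v_1,\dots,v_n$ are mutually orthogonal and nonzero, hence admissible for the sum characterization $\Lambda_n(c,d)\le \sum_{j=1}^n R[v_j]$.

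Next I would compute the Rayleigh quotient by the chain rule: $v_{j,x}=u_{j,x}$ and $v_{j,y}=\tfrac{a-c}{d}u_{j,x}+\tfrac{b}{d}u_{j,y}$, whence
\[
|\nabla v_j|^2=\tfrac{(a-c)^2+d^2}{d^2}u_{j,x}^2+\tfrac{2b(a-c)}{d^2}u_{j,x}u_{j,y}+\tfrac{b^2}{d^2}u_{j,y}^2.
\]
Integrating over $T(c,d)$, using $\Phi$ to pull back (the Jacobian $d/b$ cancels against $\int v_j^2=d/b$ in the denominator), and summing over $j=1,\dots,n$, I obtain
\[
\sum_{j=1}^n R[v_j]=\frac{1}{d^2}\Bigl[\bigl((a-c)^2+d^2\bigr)S_x+2b(a-c)S_{xy}+b^2 S_y\Bigr],
\]
where $S_x=\sum_j\int u_{j,x}^2$, $S_y=\sum_j\int u_{j,y}^2$, and $S_{xy}=\sum_j\int u_{j,x}u_{j,y}$. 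Since each $u_j$ is a Dirichlet eigenfunction with $\int|\nabla u_j|^2=\lambda_j$, we have $S_x+S_y=\Lambda_n(a,b)$, so $S_x=(1-\gamma_n)\Lambda_n(a,b)$, $S_y=\gamma_n\Lambda_n(a,b)$, and $S_{xy}=\delta_n\Lambda_n(a,b)$, giving
\[
\Lambda_n(c,d)\le \frac{\Lambda_n(a,b)}{d^2}\Bigl[\bigl((a-c)^2+d^2\bigr)(1-\gamma_n)+2b(a-c)\delta_n+b^2\gamma_n\Bigr].
\]
Under hypothesis \eqref{ineq} the bracketed factor is strictly less than $1/C$, which yields $C\Lambda_n(c,d)<\Lambda_n(a,b)$ as desired.

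There is no serious obstacle beyond bookkeeping; the one place I would be careful is the strict inequality. The variational upper bound $\Lambda_n(c,d)\le\sum R[v_j]$ is only weak, but the assumption in \eqref{ineq} is strict, so strictness is automatic in the final step. I would also note that admissibility requires $v_j\not\equiv 0$, which holds because $\Phi$ is a bijection and $u_j\not\equiv0$, and that $H^1_0$ membership is preserved under composition with a bi-Lipschitz homeomorphism of domains respecting boundaries.
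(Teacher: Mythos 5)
Your proposal is correct and follows essentially the same route as the paper: the same affine map $\tau(x,y)=\bigl(x+\tfrac{a-c}{d}y,\tfrac{b}{d}y\bigr)$ from $T(c,d)$ to $T(a,b)$, the same transplanted trial functions $u_j\circ\tau$, and the same Rayleigh-principle computation via the chain rule and change of variables. Your extra care with the Jacobian factor $b/d$ and the $L^2$-orthogonality of the $v_j$ only makes explicit what the paper leaves implicit.
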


\begin{proof}[Proof of Lemma~\ref{lemtrace}]
Let $\tau$ be the linear transformation fixing the vertices $(-1,0)$ and $(1,0)$ and mapping the point $(c,d)$ to $(a,b)$; that is,
\[
  \tau(x,y)=\left( x+\frac{a-c}{d} y,\frac{b}{d} y \right) .
\]
This transformation maps the triangle $T(c,d)$ to $T(a,b)$.

The functions $u_j \circ \tau$ are $L^2$-orthogonal on $T(c,d)$, and so the Rayleigh principle gives
\begin{align*}
\sum_{j=1}^n \lambda_j(c,d) & \leq \sum_{j=1}^n R[u_j \circ \tau] = \sum_{j=1}^n \frac{\int_{T(c,d)} |\nabla(u_j \circ \tau)|^2 \, dA}{\int_{T(c,d)} (u_j \circ \tau)^2 \, dA} \\
& = \sum_{j=1}^n \frac{ \int_{T(a,b)} d^{-2} \big[ \big( (a-c)^2+d^2 \big) u_{j,x}^2 + 2b(a-c) u_{j,x} u_{j,y}+b^2 u_{j,y}^2 \big] \, dA}{\int_{T(a,b)} u_j^2 \, dA}
\end{align*}
by the chain rule and a change of variable back to $T(a,b)$. The denominator equals $1$ because the eigenfunctions are normalized, and so
\begin{align*}
\sum_{j=1}^n \lambda_j(c,d) & \leq d^{-2} \big[ \big( (a-c)^2+d^2 \big)(1-\gamma_n)  + 2b(a-c)\delta_n + b^2\gamma_n \big] \sum_{j=1}^n \int_{T(a,b)} |\nabla u_j|^2 \, dA \\
& < \frac{1}{C} \sum_{j=1}^n \lambda_j(a,b)
\end{align*}
by assumption \eqref{ineq}.
\end{proof}

\begin{remark}
\rm It is not important that the domains be triangular, in this method. We simply need one domain to be the image of the other under a linear transformation. For example, similar results hold for parallelograms, and for elliptical domains.
\end{remark}

\begin{proof}[Proof of Proposition~\ref{unknown}]
The equilateral triangle $E = T(0,\sqrt{3})$ has diameter $2$, and the subequilateral triangle $T(0,b)$ has diameter $\sqrt{1+b^2}$. Observe that the desired inequality
\[
   \left. \Lambda_n D^2 \right|_{T(0,b)} = \Lambda_n(0,b) (1+b^2) > \Lambda_n(0,\sqrt{3}) 2^2 = \left. \Lambda_n D^2 \right|_E
\]
holds by Lemma~\ref{lemtrace} with $a=c=0, d=\sqrt{3}$ and $C=4/(1+b^2)$, if
\[
(1-\gamma_n) + \frac{1}{3} b^2 \gamma_n < \frac{1+b^2}{4} .
\]
This last inequality is equivalent to $\gamma_n < 3/4$. Thus if $\gamma_n<3/4$ then the Proposition is proved.

Suppose $\gamma_n \geq 3/4$, and remember $\gamma_n \leq 1$ by definition. Recall the right triangles $F_\pm = T(\pm 1,2\sqrt{3})$, which have diameter $4$. Observe that
\begin{equation} \label{eq:again}
\left. \Lambda_n D^2 \right|_{T(0,b)} = \Lambda_n(0,b) (1+b^2) > \frac{6}{11} \Lambda_n(\pm 1,2\sqrt{3}) 4^2 = \frac{6}{11} \left. \Lambda_n D^2 \right|_{F_\pm}
\end{equation}
holds by Lemma~\ref{lemtrace} with $a=0, c=\pm 1, d=2\sqrt{3}$ and $C=\frac{6}{11} \frac{4^2}{1+b^2}$, if
\[
\frac{1}{12} \big[ 13(1-\gamma_n) \mp 2b\delta_n + b^2 \gamma_n \big] < \frac{11}{6} \, \frac{1+b^2}{4^2} .
\]
The eigenvalues of $F_+$ and $F_-$ are the same, and so we need only establish this last inequality for ``$+$'' or for ``$-$''. It holds for at least one of these sign choices if
\[
\frac{1}{12} \big[ 13(1-\gamma_n) + b^2\gamma_n \big] < \frac{11}{6} \, \frac{1+b^2}{4^2} ,
\]
which is equivalent to
\[
b^2 + \frac{50}{11-8\gamma_n} > 13 .
\]
This inequality is true because $b > \sqrt{3}$ and  $\gamma_n \geq 3/4$. (Equality holds when $b = \sqrt{3}$ and  $\gamma_n = 3/4$.) Hence \eqref{eq:again} holds, which completes the proof.
\end{proof}

\begin{remark} \rm
The proof avoids estimating $\gamma_n$. Recall that this quantity measures the fraction of the Dirichlet energy of the eigenfunctions $u_1,\ldots,u_n$ that is provided by their $y$-derivatives. Since these eigenfunctions are not known to us, it is important that the proof be able to handle any value of $\gamma_n$.
\end{remark}

\section{\bf Comparison of eigenvalue sums: proof of Lemma~\ref{compequilateral}} \label{seccomp}

To prove Lemma~\ref{compequilateral}, we need the following Weyl-type bounds on the eigenvalues of the equilateral triangle with sidelength $1$.

Call that triangle $E_1$, and define its counting function
\[
N(\lambda) = \# \{ j \geq 1 : \lambda_j(E_1) < \lambda \} .
\]
\begin{lemma} \label{counting}
The counting function satisfies
\[
\frac{\sqrt{3}}{16\pi} \lambda -\frac{\sqrt{3}}{4\pi}\sqrt{\lambda} + \frac{1}{2} > N(\lambda) > \frac{\sqrt{3}}{16\pi} \lambda - \frac{(6-\sqrt{3})}{4\pi} \sqrt{\lambda} - \frac{1}{2} , \qquad \forall \, \lambda > 48\pi^2 .
\]
Hence for all $j \geq 17$,
\begin{align*}
& \frac{16\pi}{\sqrt{3}}\Big( j-\frac{1}{2} \Big) + 8 \sqrt{\frac{4\pi}{\sqrt{3}} \Big( j-\frac{1}{2} \Big) + 1} + 8 \\
& \leq \lambda_j(E_1) \\
& < \frac{16\pi}{\sqrt{3}}\Big( j+\frac{1}{2} \Big) + \frac{4}{\sqrt{3}}(6-\sqrt{3}) \sqrt{ \frac{16\pi}{\sqrt{3}} \Big( j+\frac{1}{2} \Big) + 4(13-4\sqrt{3})} + 8(13-4\sqrt{3}) \\
& < (29.03)j+9.9\sqrt{29.03j+39}+64 .
\end{align*}
\end{lemma}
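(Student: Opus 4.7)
The plan is to use the explicit formula for the Dirichlet eigenvalues of $E_1$ recalled in Appendix~\ref{equilateral}, which writes each eigenvalue as $\frac{16\pi^2}{9}Q(p,q)$ for the quadratic form $Q(p,q)=p^2+pq+q^2$ and integer pairs $(p,q)$ in a specific sector $S \subset \mathbb{Z}^2$. Then $N(\lambda)$ equals the number of lattice points of $S$ inside the ellipse $\{Q < R^2\}$, where $R^2 = 9\lambda/(16\pi^2)$. Since $Q$ has determinant $3/4$, the ellipse has area $2\pi R^2/\sqrt{3}$, and the appropriate sector fraction matches the Weyl leading term $\sqrt{3}\,\lambda/(16\pi) = A(E_1)\,\lambda/(4\pi)$.

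The first step is a Gauss unit-cell comparison: attach to each counted lattice point a fundamental parallelogram of $\mathbb{Z}^2$, and observe that the resulting paving lies inside (respectively contains) the ellipse thickened by (respectively shrunk by) the cell diameter. This yields $N(\lambda) = \sqrt{3}\,\lambda/(16\pi) + O(\sqrt{\lambda}) + O(1)$ with explicit constants. To extract the precise coefficients $\sqrt{3}/(4\pi)$ and $(6-\sqrt{3})/(4\pi)$, I would (i) compute the length of the relevant elliptical arc directly from $Q$, and (ii) carefully account for lattice points on the straight edges of $S$, since such degenerate points correspond to no Dirichlet eigenvalue. The threshold $\lambda > 48\pi^2$, equivalently $R^2 > 27$, is the value at which the resulting additive error first shrinks to within $\pm\tfrac{1}{2}$.

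The second step is the standard inversion: $\lambda_j \leq \lambda$ is equivalent to $N(\lambda^+) \geq j$, so applying the upper bound on $N$ at $\lambda = \lambda_j^+$ produces a quadratic inequality in $\sqrt{\lambda_j}$ whose solution yields the stated lower bound on $\lambda_j$; the upper bound is obtained symmetrically from the lower bound on $N$. The numerical simplification $(29.03)j + 9.9\sqrt{29.03 j + 39} + 64$ then follows from the elementary estimates $16\pi/\sqrt{3} < 29.03$, $4(6-\sqrt{3})/\sqrt{3} < 9.9$, $(16\pi/\sqrt{3})/2 + 4(13-4\sqrt{3}) < 39$, and $8(13-4\sqrt{3}) < 64$. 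The restriction $j \geq 17$ is confirmed by enumerating the first sixteen values of $Q(p,q)$ for $(p,q) \in S$ and checking that $\lambda_{17}$ already exceeds $48\pi^2$.

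The main obstacle is the explicit lattice-point bookkeeping in the first step. Leading-order area-counting is classical, but locking in the asymmetric coefficients $\sqrt{3}/(4\pi)$ versus $(6-\sqrt{3})/(4\pi)$, and keeping the additive constant down to $\pm\tfrac{1}{2}$ rather than a larger $O(1)$, requires careful geometry near the boundary of $S$, where the excluded lattice points systematically bias the lower estimate on $N$ and force the slightly larger coefficient $(6-\sqrt{3})/(4\pi)$.
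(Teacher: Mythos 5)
Your overall skeleton is right -- identify $N(\lambda)$ with a lattice-point count for the form $m^2+mn+n^2$ inside the ellipse of ``radius'' $R=\tfrac{3}{4\pi}\sqrt{\lambda}$, get the Weyl term from the area $\pi R^2/3\sqrt{3}$ of the first-quadrant piece, and invert a quadratic to pass from bounds on $N$ to bounds on $\lambda_j$; the numerical simplifications and the $j\ge 17$ threshold are also handled correctly. But there is a genuine gap at the heart of the first step: the mechanism you propose cannot produce the stated coefficients, and in particular cannot produce the \emph{upper} bound $N(\lambda)<\tfrac{\sqrt{3}}{16\pi}\lambda-\tfrac{\sqrt{3}}{4\pi}\sqrt{\lambda}+\tfrac12$. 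Thickening the region by the cell diameter only \emph{increases} area, so a Gauss unit-cell comparison yields an upper bound of the form ``area plus a positive $O(\sqrt{\lambda})$ correction,'' whereas the lemma requires area \emph{minus} $\tfrac{R}{\sqrt{3}}-\tfrac12$. The correct observation is that since $m,n\ge 1$ and the form is monotone in each variable on the first quadrant, the closed unit square with upper-right vertex at each counted point lies entirely inside the elliptical region; hence $N\le$ area with no positive correction at all, and the improvement then comes from explicitly exhibiting regions inside the ellipse but outside this union of squares, namely triangles inserted into each downward step of the staircase $m\mapsto n(m)$ together with two end pieces, whose total area is at least $\tfrac{R}{\sqrt{3}}-\tfrac12$ after symmetrizing across the diagonal. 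Note that $R/\sqrt{3}$ is the height of the apex $(R/\sqrt{3},R/\sqrt{3})$ -- essentially half the total vertical drop of the staircase on each side of the diagonal -- not an arc length, so computing the elliptical arc length from $Q$ as you suggest points in the wrong direction.

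The lower bound has the same issue: shrinking by the cell diameter loses roughly $\sqrt{2}\times(\text{perimeter})$, which is strictly worse than the coefficient $\tfrac{6-\sqrt{3}}{3}R=2R-\tfrac{R}{\sqrt{3}}$ in the lemma. The actual argument shifts the ellipse by $(-1,-1)$, notes the shifted region is covered by the union of squares, bounds the area lost near the two axes by $2R-\tfrac32$, and then \emph{adds back} staircase triangles worth about $\tfrac{R}{\sqrt{3}}-2$; your intuition that the excluded axis points drive the asymmetry is correct, but the compensating $+R/\sqrt{3}$ term is missing from your plan. Two smaller points: the constants matter downstream (Lemma~\ref{compequilateral} needs exactly these coefficients so that the analytic bound takes over at $n\ge 110$, matching the finite check in Lemma~\ref{explicit}), so a weaker $O(\sqrt{\lambda})$ coefficient would break the proof of Theorem~\ref{th:trace}; and the threshold $\lambda>48\pi^2$ (i.e.\ $R>3\sqrt{3}$) is not ``where the error shrinks to $\pm\tfrac12$'' but a structural requirement ensuring $m_1=\lfloor R/\sqrt{3}\rfloor\ge 3$ so that the staircase construction is nondegenerate.
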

\begin{proof}[Proof of Lemma~\ref{counting}]
The eigenvalues of the equilateral triangle (see Appendix~\ref{equilateral}) are
\[
  \sigma_{m,n} = \frac{16\pi^2}{9}(m^2+mn+n^2), \qquad m,n \geq 1 .
\]
The number of eigenvalues less than $\lambda$ is precisely the size of the set
\[
Q = \{ (m,n) : m,n \geq 1, m^2+mn+n^2 < R^2 \} ,
\]
where
\[
R^2= \frac{9}{16\pi^2} \lambda , \qquad \text{that is,} \qquad R= \frac{3}{4\pi} \sqrt{\lambda}.
\]
Thus to estimate $N(\lambda)$, our task is to count the lattice points in the first quadrant that lie inside the ellipse
\[
x^2+xy+y^2 = R^2 .
\]
The portion of the ellipse contained in the first quadrant has area $\pi R^2/3\sqrt{3}$, by an elementary calculation. This area exceeds the number of lattice points in $Q$, because each lattice point $(m,n)$ corresponds to a closed unit square with upper right vertex $(m,n)$ and with the whole square lying within the ellipse. Hence
\begin{equation} \label{weyltype}
N(\lambda) <  \frac{\pi R^2}{3\sqrt{3}}  = \frac{\sqrt{3}}{16\pi} \lambda .
\end{equation}
We aim to improve this Weyl-type estimate by subtracting a term of order $\sqrt{\lambda}$. Our proof will assume $R > 3\sqrt{3}$, which explains the restriction to $\lambda > 48\pi^2$ in the lemma.

For each $m$ with $1 \leq m < R$, define $n(m)$ to be the largest integer $n$ for which $(m,n) \in Q$, and define $n(m)=0$ when there is no such $n$. The function $n(\cdot)$ is decreasing, since the ellipse has a decreasing graph. (Figure \ref{figell} shows the part of the ellipse with $x>y$.) We are going to insert a triangle into each downward step of this function, and subtract the areas of these triangles from the area of the ellipse.

We will use repeatedly (without comment) the fact that the slope of the ellipse is between $-1/2$ and $-2$ in the first quadrant, and between $-1$ and $-2$ in $\{ x>y>0 \}$.

Let $m_1$ be the largest value of $m$ for which $(m,m) \in Q$; that is, $m_1 = \lfloor R/\sqrt{3} \rfloor \geq 3$.  Hence $m_1+1 > n(m_1+1)$. Therefore $(m,n(m))$ lies in the region $\{ x>y \}$ for $m = m_1+1$, and for all larger values of $m$ too, because $n(\cdot)$ is decreasing. Further, $n(\cdot)$ is strictly decreasing for these $m$-values (until it hits zero).

Let $m_2 < R$ be the largest value of $m$ for which $n(m)$ is positive.
Notice $m_1+1 \leq m_2$, because $(m_1,m_1) \in Q$ and $m_1 \geq 3$ imply $n(m_1+1) \geq 1$. Since $n(\cdot)$ is strictly decreasing on the interval $[m_1+1,m_2]$, on each subinterval of the form $[m-1,m]$ we may sketch a triangle (shaded in Figure~\ref{figell}) with vertices at $(m,n(m)), (m-1,n(m))$ and $(m-1,n(m-1))$. These triangles lie inside the ellipse (by convexity) but outside the union of squares having upper right vertices in $Q$. The total area of the triangles is $\frac{1}{2} [n(m_1+1)-n(m_2)]$. We may include an additional triangle $T_R$ on the right end with height $n(m_2)$ and width $\frac{1}{2}n(m_2)$ (see Figure~\ref{figell}), which has area
\[
\frac{1}{4}n(m_2)^2 \geq \frac{1}{2}n(m_2) - \frac{1}{4} .
\]
The total area of the triangles including $T_R$ is $\geq \frac{1}{2} n(m_1+1) - \frac{1}{4}$.

\begin{figure}[ht]
  \begin{center}
    \begin{tikzpicture}[scale=1]
      \pgfmathsetmacro{\semi}{7.4};
      \pgfmathsetmacro{\R}{\semi*sqrt(3/2)}
      \draw (0,0) node[below] {$0$};
      \draw (\R,0) node[below] {$R$};
      \draw ({\R/sqrt(3)},{\R/sqrt(3)}) node[above] {$\left(\frac{R}{\sqrt{3}},\frac{R}{\sqrt{3}}\right)$};
      \draw (6.25,4.25) node[right] {$x^2+xy+y^2=R^2$};
      \draw (5,0) node[below] {$m_1$};
      \draw (6,0) node[below] {$m_1 \! + \! 1$};
      \draw (8,0) node[below] {$m_2$};
      \clip (-0.1,-0.1) -- (5.5,5.5) -- (10,-0.1) -- cycle;
      \foreach \x in {1,...,9}
         \foreach \y in {1,...,9}
	 {
	    \pgfmathsetmacro{\z}{\x^2+\x*\y+\y^2<3*\semi^2/2}
	    \fill[blue] (\x,\y) circle (\z*2pt);
	    \draw[blue] (\x,\y) rectangle +(-\z,-\z);
	 }
	 \begin{scope}
	 \clip (0,0) rectangle (10,10);
	 \draw[thick,red,rotate=45] (0,0) ellipse ({\semi} and {\semi*sqrt(3)});
	 \clip[rotate=45] (0,0) ellipse ({\semi} and {\semi*sqrt(3)});
      \draw (10,0) -| (0,10);
      \draw[thick] (0,0) -- (45:\semi);
	 \end{scope}
      \begin{scope}
	 \clip (0,0) rectangle (10,10);
	 \clip[shift={(-1,-1)},rotate=45] (0,0) ellipse ({\semi} and {\semi*sqrt(3)});
      \pgfmathsetmacro{\sem}{\semi-sqrt(2)}
      \end{scope}
\fill[draw=red,pattern color=red,pattern=vertical lines] (5,4) -- (5,5) -- ({\R/sqrt(3)},{\R/sqrt(3)}) -- (6,4) -- cycle;
\fill[draw=red,pattern color=red,pattern=vertical lines] (6,3) -- (6,4) -- (7,3) -- cycle;
\fill[draw=red,pattern color=red,pattern=vertical lines] (7,1) -- (7,3) -- (8,1) -- cycle;
\fill[draw=red,pattern color=red,pattern=vertical lines] (8,1) -- (8,0) -- (8.5,0) -- cycle;
      \draw (5.35,4.5) node {$T_L$};
      \draw (8.25,.3) node {$T_R$};
    \end{tikzpicture}
  \end{center}
  \caption{The upper bound on the counting function $N(\lambda)$ equals the area shown inside the ellipse minus the shaded areas, multiplied by $2$.}
  \label{figell}
\end{figure}
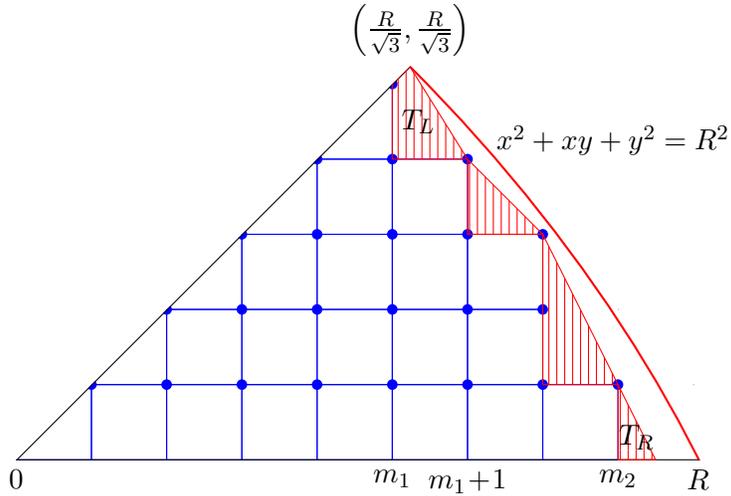

Finally, we include an additional quadrilateral $T_L$ on the left with vertices $(m_1,m_1),(m_1,n(m_1+1)),(m_1+1,n(m_1+1))$ and $(R/\sqrt{3},R/\sqrt{3})$; see Figure~\ref{figell}, which illustrates the case $n(m_1+1)=m_1-1$. (In the case $n(m_1+1)=m_1$, the quadrilateral would degenerate to a triangle.) By writing $\e = (R/\sqrt{3})-m_1 \geq 0$ and $h=m_1-n(m_1+1) \geq 0$, we see the region $T_L$ has area
\[
\e h + \frac{1}{2}\e^2 + \frac{1}{2} (1-\e)(h+\e) \geq \frac{1}{2} (h+\e) = \frac{1}{2} [(R/\sqrt{3})-n(m_1+1)].
\]
Thus the total area of the triangles and $T_L$ and $T_R$ is greater than or equal to
\begin{equation} \label{goodlower}
\frac{1}{2} \frac{R}{\sqrt{3}} - \frac{1}{4} .
\end{equation}
The same area can be found in $\{ x < y \}$, by symmetry, and so the combined area is $\geq (R/\sqrt{3}) - \frac{1}{2}$.

We subtract this area from the area of the ellipse when estimating $N(\lambda)$. Thus
\begin{equation} \label{weyltype2}
N(\lambda)< \frac{\pi R^2}{3\sqrt{3}} - \frac{R}{\sqrt{3}} + \frac{1}{2} = \frac{\sqrt{3}}{16\pi} \lambda - \frac{\sqrt{3}}{4\pi}\sqrt{\lambda} + \frac{1}{2}
\end{equation}
for all $\lambda > 48\pi^2$, as claimed in the lemma.

The lower bound on $\lambda_j(E_1)$ in the lemma now follows by solving a quadratic inequality. (Note the Weyl type estimate \eqref{weyltype} implies $\lambda_j \geq (16\pi/\sqrt{3})j$ for each $j \geq 1$. Thus when $j \geq 17$ we have $\lambda_j > 48\pi^2$, which ensures that the quadratic inequality \eqref{weyltype2} holds for all $\lambda \geq \lambda_j$.)

\medskip
Next, to obtain a lower bound on the counting function we shift the ellipse downwards by $1$ unit and leftwards by $1$ unit. The intersection of this shifted elliptical region with the first quadrant is covered by the union of squares with upper right vertices in $Q$ (since any point that lies \emph{outside} the union of squares and within the original ellipse must belong to a square whose upper right vertex is not in $Q$; notice every such square lies outside the shifted ellipse).

Therefore a lower bound on the number of lattice points in the original ellipse is given by the area of the shifted ellipse that lies in the first quadrant. To bound that area from below, we begin with the area of the original ellipse and subtract $2R-1$, which overestimates how much of that ellipse lies in the regions $0 < x < 1$ and $0 < y < 1$ (these regions will be removed from the first quadrant by the shift). A slightly better upper estimate is $2R-1-\frac{1}{2}$, because the ellipse is convex with slope $-2$ at the point $(R,0)$ and thus intersects the rectangle $[R-\frac{1}{2},R] \times [0,1]$ in an area less than $1/4$ (and argue similarly near the point $(0,R)$). Hence
\[
  N(\lambda) > \frac{\pi R^2}{3\sqrt{3}}-\left(2R-\frac{3}{2}\right) .
\]

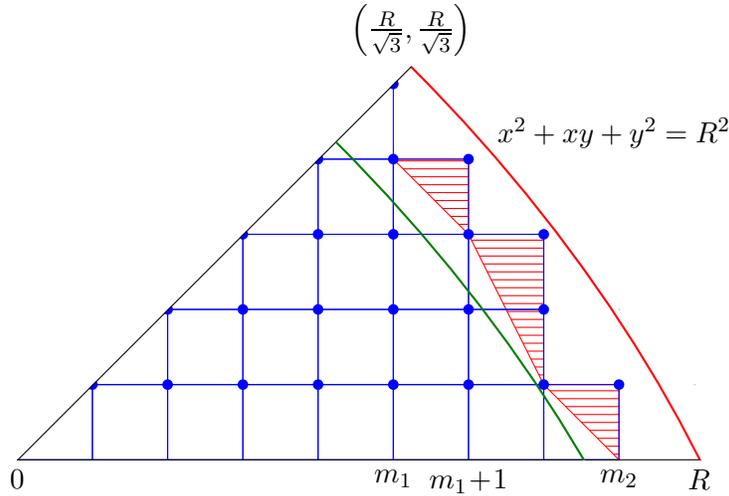
\begin{figure}[ht]
  \begin{center}
    \begin{tikzpicture}[scale=1]
      \pgfmathsetmacro{\semi}{7.4};
      \pgfmathsetmacro{\R}{\semi*sqrt(3/2)}
      \draw (0,0) node[below] {$0$};
      \draw (\R,0) node[below] {$R$};
      \draw ({\R/sqrt(3)},{\R/sqrt(3)}) node[above] {$\left(\frac{R}{\sqrt{3}},\frac{R}{\sqrt{3}}\right)$};
      \draw (6.25,4.35) node[right] {$x^2+xy+y^2=R^2$};
      \draw (5,0) node[below] {$m_1$};
      \draw (6,0) node[below] {$m_1 \! + \! 1$};
      \draw (8,0) node[below] {$m_2$};
\fill[draw=red,pattern color=red,pattern=horizontal lines] (5,4) -- (6,4) -- (6,3) -- cycle;
\fill[draw=red,pattern color=red,pattern=horizontal lines] (6,3) -- (7,3) -- (7,1) -- cycle;
\fill[draw=red,pattern color=red,pattern=horizontal lines] (7,1) -- (8,1) -- (8,0) -- cycle;
      \clip (-0.1,-0.1) -- (5.5,5.5) -- (10,-0.1) -- cycle;
      \foreach \x in {1,...,9}
         \foreach \y in {1,...,9}
	 {
	    \pgfmathsetmacro{\z}{\x^2+\x*\y+\y^2<3*\semi^2/2}
	    \fill[blue] (\x,\y) circle (\z*2pt);
	    \draw[blue] (\x,\y) rectangle +(-\z,-\z);
	 }
	 \begin{scope}
	 \clip (0,0) rectangle (10,10);
	 \draw[thick,red,rotate=45] (0,0) ellipse ({\semi} and {\semi*sqrt(3)});
	 \clip[rotate=45] (0,0) ellipse ({\semi} and {\semi*sqrt(3)});
      \draw (10,0) -| (0,10);
      \draw[thick] (0,0) -- (45:\semi);
	 \end{scope}
      \begin{scope}
	 \clip (0,0) rectangle (10,10);
	 \draw[green!50!black,thick,shift={(-1,-1)},rotate=45] (0,0) ellipse ({\semi} and {\semi*sqrt(3)});
	 \clip[shift={(-1,-1)},rotate=45] (0,0) ellipse ({\semi} and {\semi*sqrt(3)});
      \pgfmathsetmacro{\sem}{\semi-sqrt(2)}
      \end{scope}
    \end{tikzpicture}
  \end{center}
  \caption{The lower bound on the counting function $N(\lambda)$ equals the area inside the shifted ellipse plus the shaded areas, multiplied by $2$.}
  \label{figlower}
\end{figure}

An even better lower bound can be obtained by adding to the right side of the inequality the areas of the triangles shaded in Figure~\ref{figlower}. These triangles are inside the union of squares but outside the shifted ellipse. To specify these triangles precisely, let $m$ be between $m_1+1$ and $m_2$ (inclusive), and consider the triangle with vertices $(m,n(m)),(m-1,n(m))$ and $(m,n(m+1))$. The shifted ellipse passes below or through each vertex of the triangle. We will show that the triangle lies above the shifted ellipse, by considering two cases. (i) If $n(\cdot)$ decreases by $1$ at $m$, meaning $n(m)=n(m+1)+1$, then the hypotenuse of the triangle has slope $-1$, whereas the ellipse has slope less than $-1$ and passes below the upper left vertex at $(m-1,n(m))$; hence the triangle lies above the shifted ellipse. (ii) If $n(\cdot)$ decreases by $2$ at $m$, meaning $n(m)=n(m+1)+2$, then the hypotenuse of the triangle has slope $-2$, whereas the ellipse has slope greater than $-2$ and passes below the lower right vertex at $(m,n(m+1))$; hence the triangle lies above the shifted ellipse.

The triangles have total area $\frac{1}{2}n(m_1+1)$. Recall that $n(m_1+1)$ equals $m_1$ or $m_1-1$ or $m_1-2$. In the first case, the triangles have area $\frac{1}{2}m_1$. In the second they have area $\frac{1}{2}(m_1-1)$. In the third case, where $n(m_1+1)=m_1-2$, we employ also a triangle (not shown in Figure~\ref{figlower}) with vertices at $(m_1,m_1),(m_1,m_1-2)$ and $(m_1-\frac{1}{2},m_1-1)$; this triangle has area $\frac{1}{2}$, and so the areas sum to $\frac{1}{2}(m_1-2)+\frac{1}{2}$. Hence in every case, we obtain an area of at least $\frac{1}{2}(m_1-1)$. Doubling this estimate due to symmetry, and then estimating $m_1$ from below with $(R/\sqrt{3})-1$, we obtain the lower estimate
\[
  N(\lambda) > \frac{\pi R^2}{3\sqrt{3}}-\left(2R-\frac{3}{2}\right) + \frac{R}{\sqrt{3}} - 2 ,
\]
which proves the lower bound on $N(\lambda)$ in the lemma. The upper bound on $\lambda_j(E_1)$ then follows easily.
\end{proof}

Lattice counting problems are well studied in number theory; see the monograph by Huxley \cite{H96}. Analytic methods using exponential sums give the best asymptotic results, but more elementary geometric methods like in the preceding proof have the advantage of yielding explicit inequalities for all $\lambda$.

\medskip
Let $\lambda_j^a(E_1)$ denote the $j$-th antisymmetric eigenvalue of the equilateral triangle, and write $N^a(\lambda)$ for the corresponding counting function.

\begin{lemma}\label{antisym}
The antisymmetric counting function satisfies
\[
N^a(\lambda)
< \frac{\sqrt{3}}{32\pi} \lambda - \frac{\sqrt{3}}{4\pi} \sqrt{\lambda} + \frac{3}{4} , \qquad \forall \, \lambda > 48\pi^2 .
\]
Hence for all $j \geq 9$,
\begin{align*}
\lambda_j^a(E_1)
& \geq \frac{32\pi}{\sqrt{3}} \Big( j-\frac{3}{4} \Big)+ 8 \sqrt{\frac{32\pi}{\sqrt{3}} \Big( j-\frac{3}{4} \Big) + 16} + 32 \\
& > 58j+8\sqrt{58j-28}-12 .
\end{align*}
\end{lemma}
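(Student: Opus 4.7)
The strategy is to reduce the antisymmetric counting function to the full counting function (already bounded in Lemma~\ref{counting}) minus a trivially counted ``diagonal'' contribution, and then invert the resulting quadratic inequality.

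First I would recall how the spectrum of the equilateral triangle splits under reflection in a median. By the explicit formula $\sigma_{m,n} = \frac{16\pi^2}{9}(m^2+mn+n^2)$ with $m,n \geq 1$, the pairs $(m,n)$ and $(n,m)$ yield the same eigenvalue. For $m \neq n$ the two-dimensional eigenspace decomposes into one symmetric and one antisymmetric mode, whereas for $m = n$ the (one-dimensional) eigenspace is symmetric. Hence the antisymmetric eigenvalues are indexed precisely by the lattice points with $m > n \geq 1$, so if I set $R = \frac{3}{4\pi}\sqrt{\lambda}$ and let $N^d(\lambda) = \#\{m \geq 1 : 3m^2 < R^2\}$ count the diagonal contribution, then
\[
N^a(\lambda) = \frac{N(\lambda) - N^d(\lambda)}{2}.
\]

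Next I would bound the two pieces on the right. The upper bound on $N(\lambda)$ is exactly the content of Lemma~\ref{counting}, valid for $\lambda > 48\pi^2$. For the lower bound on $N^d$, note that $N^d(\lambda)$ counts the positive integers strictly less than $R/\sqrt{3} = \frac{\sqrt{3}}{4\pi}\sqrt{\lambda}$, so $N^d(\lambda) \geq \frac{\sqrt{3}}{4\pi}\sqrt{\lambda} - 1$. Substituting both estimates yields
\[
N^a(\lambda) < \frac{1}{2}\!\left[ \frac{\sqrt{3}}{16\pi}\lambda - \frac{\sqrt{3}}{2\pi}\sqrt{\lambda} + \frac{3}{2} \right] = \frac{\sqrt{3}}{32\pi}\lambda - \frac{\sqrt{3}}{4\pi}\sqrt{\lambda} + \frac{3}{4},
\]
which is the first assertion.

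For the corresponding lower bound on $\lambda_j^a(E_1)$, I would invert the counting function inequality by the same completing-the-square trick used in Lemma~\ref{counting}. Writing the inequality as $(\sqrt{\lambda}-4)^2 > \frac{32\pi}{\sqrt{3}}(j-\frac{3}{4}) + 16$ and solving for $\sqrt{\lambda}$ gives the displayed explicit bound. To legitimately apply the upper bound on $N^a$, I need $\lambda_j^a(E_1) > 48\pi^2$; but the trivial Weyl-type inequality $N^a(\lambda) \leq \frac{\sqrt{3}}{32\pi}\lambda$ (the version of the argument above without the subtracted $\sqrt{\lambda}$ and constant terms) immediately gives $\lambda_j^a \geq \frac{32\pi}{\sqrt{3}}j$, which exceeds $48\pi^2$ as soon as $j \geq 9$. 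Finally, the cleaner numerical bound $\lambda_j^a > 58j + 8\sqrt{58j-28} - 12$ follows by termwise monotone comparison, since $\frac{32\pi}{\sqrt{3}} > 58$ and $32 > -12$.

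The only delicate point is the multiplicity bookkeeping at the start: once the symmetric/antisymmetric splitting of each $\sigma_{m,n}$ is correctly accounted for, the rest is a transcription of Lemma~\ref{counting} followed by the same quadratic inversion used there.
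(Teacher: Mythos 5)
Your proof is correct, and the first half takes a genuinely different (and arguably cleaner) route than the paper. The paper bounds $N^a$ by redoing the lattice-point geometry directly in the half-region $\{x>y\}$: it starts from the area $\pi R^2/6\sqrt{3}$, subtracts the quantity $\tfrac{1}{2}(R/\sqrt{3})-\tfrac{1}{4}$ carried over from \eqref{goodlower}, and additionally subtracts the $m_1$ unit-hypotenuse triangles of area $\tfrac{1}{2}$ adjacent to the diagonal $\{x=y\}$, arriving at $\tfrac{\sqrt{3}}{32\pi}\lambda-\tfrac{\sqrt{3}}{4\pi}\sqrt{\lambda}+\tfrac{3}{4}$. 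You instead exploit the exact identity $N^a=\tfrac{1}{2}\bigl(N-N^d\bigr)$ (valid because the antisymmetric spectrum is indexed by $m>n\geq 1$ while $\sigma_{m,n}=\sigma_{n,m}$ and the diagonal modes are symmetric), feed in the already-proved upper bound on $N$ from Lemma~\ref{counting} together with the trivial estimate $N^d\geq R/\sqrt{3}-1$, and recover the identical inequality with no further geometry; the arithmetic $\tfrac{1}{2}\bigl[\tfrac{\sqrt{3}}{16\pi}\lambda-\tfrac{\sqrt{3}}{2\pi}\sqrt{\lambda}+\tfrac{3}{2}\bigr]$ indeed reproduces the stated bound exactly. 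What your approach buys is economy and transparency: the multiplicity bookkeeping is made explicit and the half-region construction need not be repeated. What the paper's direct approach buys is independence from the particular constants in Lemma~\ref{counting} (in principle the half-region argument could be optimized separately), though here the two derivations happen to land on the same constants. The inversion step, the use of the crude bound $\lambda_j^a\geq\tfrac{32\pi}{\sqrt{3}}j>48\pi^2$ for $j\geq 9$ to legitimize applying the refined bound, and the final numerical simplification are essentially the same as in the paper, and your termwise comparison checks out since $\tfrac{32\pi}{\sqrt{3}}>58$ gives $58(j-\tfrac{3}{4})+32=58j-11.5>58j-12$ and $58(j-\tfrac{3}{4})+16=58j-27.5>58j-28$.
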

\begin{proof}[Proof of Lemma \ref{antisym}]
The antisymmetric eigenvalues of the equilateral triangle $E_1$ are
\[
  \sigma_{m,n}^a = \frac{16\pi^2}{9}(m^2+mn+n^2), \qquad m>n \geq 1 .
\]
Thus in the notation of the previous proof, the task is to count lattice points in the first quadrant that lie inside the ellipse $x^2+xy+y^2 = R^2$ and in the region $\{ x>y \}$. The area of this region is $\pi R^2/6\sqrt{3}$, which gives an initial estimate $N^a(\lambda) < \pi R^2/6\sqrt{3} = (\sqrt{3}/32\pi)\lambda$.

To improve the bound, we subtract an area $\frac{1}{2} (R/\sqrt{3})-\frac{1}{4}$ like in \eqref{goodlower} of the previous proof. We may also subtract the triangles adjacent to the diagonal $\{ x=y \}$; there are $m_1$ such triangles, each having area $\frac{1}{2}$. Thus the total area to be subtracted is $\frac{1}{2} (R/\sqrt{3}) - \frac{1}{4} + \frac{1}{2} \lfloor R/\sqrt{3} \rfloor$, which is greater than $(R/\sqrt{3})-\frac{3}{4}$. Therefore
\[
N^a(\lambda)
< \frac{\pi R^2}{6\sqrt{3}} - \frac{R}{\sqrt{3}} + \frac{3}{4} \\
= \frac{\sqrt{3}}{32\pi} \lambda - \frac{\sqrt{3}}{4\pi} \sqrt{\lambda} + \frac{3}{4} .
\]
Inverting this estimate on the counting function yields a bound on $\lambda_j^a$. This bound holds for $j \geq 9$, because the initial estimate $N^a(\lambda) < (\sqrt{3}/32\pi)\lambda$ ensures that $\lambda^a_j > 48\pi^2$ when $j \geq 9$, as required for the method above.
\end{proof}

\begin{proof}[Proof of Lemma \ref{compequilateral}]
The right triangle $F=T(1,2\sqrt{3})$ is half of an equilateral triangle, and so its Dirichlet spectrum is the antisymmetric spectrum of that equilateral triangle. Also, $E=T(0,\sqrt{3})$ is an equilateral triangle. Therefore to prove $\left. \Lambda_n D^2 \right|_F > \frac{11}{6} \! \left. \Lambda_n D^2 \right|_E$, it suffices (by scale invariance) to show
\[
    \Lambda_n^a > \frac{11}{6} \Lambda_n
\]
for the equilateral triangle of sidelength $1$.

For $n \leq 110$ we verify the inequality directly, in Lemma~\ref{explicit}. For larger values of $n$, we can estimate the ratio of eigenvalues by Lemmas~\ref{counting} and \ref{antisym}:
\[
    \frac{\lambda_n^a}{\lambda_n} > \frac{58n+8\sqrt{58n-28}-12}{(29.03)n+9.9\sqrt{(29.03)n+39}+64} ,
\]
which is larger than $\frac{11}{6}$ when $n \geq 110$, by elementary estimates. Hence $\Lambda_n^a > \frac{11}{6} \Lambda_n$ for all $n$.
\end{proof}

\section{\bf Proof of Theorem~\ref{th:secondtone}: the lower bound on $\lambda_2 D^2$} \label{sec:secondtone_proof}

Equality does hold for equilaterals, by explicit evaluation of the eigenvalues as in Appendix~\ref{equilateral}. Consider a non-equilateral triangle. We may assume it is subequilateral by domain monotonicity (see Step 1 of Section~\ref{sectraces}).

The desired inequality for $\lambda_1 D^2$ was proved already in Theorem~\ref{th:trace}. Thus our task is to prove
\[
\lambda_2 D^2 > \frac{7 \cdot 16 \pi^2}{9}
\]
for all subequilateral triangles. A numerical verification can be found in Figure~\ref{fig:tones}b later in the paper; in this section we develop a rigorous proof.

In terms of the subequilateral triangle $T(0,b)$ with diameter $\sqrt{b^2+1}$, we need to show
\begin{equation} \label{subeqneeded}
\lambda_2(0,b) > \frac{112\pi^2}{9(b^2+1)} , \qquad b>\sqrt{3} .
\end{equation}

First assume $b \geq 5/2$. The subequilateral triangle $T(0,b)$ has aperture angle $\beta=2\arctan(1/b)$, and hence lies in a sector of radius $D=\sqrt{b^2+1}$ and aperture $\alpha=2\arctan \big( 1/(5/2) \big) \simeq 0.761$. The second eigenvalue of the sector is $j_{\nu,2}^2/D^2$ where $\nu=\pi/\alpha$, with eigenfunction $J_\nu(j_{\nu,2}r/D) \cos (\nu \theta)$.
Hence by domain monotonicity,
\[
\lambda_2(0,b) \geq \frac{j_{\nu,2}^2}{D^2} \simeq \frac{126}{D^2} .
\]
The value $126$ exceeds $112\pi^2/9 \simeq 123$, and so \eqref{subeqneeded} is proved.

\medskip
Now assume $b \leq 5/2$. The idea is to sharpen our Method of the Unknown Trial Function by using an ``endpoint'' domain that is subequilateral, rather than the right triangle used in proving Theorem~\ref{th:trace}. The price to be paid is that the eigenvalues of the subequilateral triangle must be estimated somehow (in Lemma~\ref{evalCh} below), whereas the eigenvalues of the right triangle were known explicitly.

We start with an upper bound for the fundamental tone of a triangle in terms of the side lengths:
\begin{equation} \label{polyaup}
\lambda_1 \leq \frac{\pi^2}{3} \, \frac{l_1^2 + l_2^2 + l_3^2}{A^2} .
\end{equation}
This bound is due to P\'{o}lya \cite{P52}; see the discussion in \cite[{\S}3]{LS10b}. Applying the bound to $T(0,b)$ yields
\[
\lambda_1(0,b) \leq \frac{2\pi^2(b^2+3)}{3b^2} .
\]
We will combine this upper bound on $\lambda_1$ with a lower bound on $\lambda_1 + \lambda_2$. First, decompose
\begin{align*}
\lambda_2 = (\lambda_1 + \lambda_2) - \lambda_1 & = \Lambda_2 - \lambda_1 \\
& \geq \Lambda_2 - \frac{2\pi^2(b^2+3)}{3b^2} .
\end{align*}
Then observe that \eqref{subeqneeded} will follow if we prove
\[
\Lambda_2(0,b) > \frac{112\pi^2}{9(b^2+1)} + \frac{2\pi^2(b^2+3)}{3b^2} , \qquad b>\sqrt{3} .
\]
We rewrite this desired inequality as
\begin{equation} \label{whatwewant}
\Lambda_2(0,b) > C(b) \Lambda_2(0,\sqrt{3}) , \qquad b>\sqrt{3} ,
\end{equation}
where
\[
C(b) = \frac{3b^4 + 68b^2 + 9}{20b^2(b^2+1)}
\]
and where we have used that $\Lambda_2(0,\sqrt{3})=40\pi^2/9$ by Appendix~\ref{equilateral} (since $T(0,\sqrt{3})$ is equilateral with diameter $2$). As a check, notice that equality holds in \eqref{whatwewant} for the equilateral case, $b=\sqrt{3}$, because $C(\sqrt{3})=1$.

We finish the proof with the following two lemmas. They use the quantity
\[
\widetilde{C}(b) = \frac{13b^2+81}{40b^2} ,
\]
which is greater than or equal to $C(b)$ (although we will not need that fact).
\begin{lemma} \label{condCh}
Fix $h > \sqrt{3}$. If
\begin{equation} \label{lemmahyp}
\Lambda_2(0,h) \geq \widetilde{C}(h) \Lambda_2(0,\sqrt{3})
\end{equation}
then
\[
\Lambda_2(0,b) > C(b) \Lambda_2(0,\sqrt{3})
\]
whenever $\sqrt{3} < b < h$.
\end{lemma}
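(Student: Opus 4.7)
The plan is to adapt the Method of the Unknown Trial Function from Proposition~\ref{unknown}, but with the fixed subequilateral triangle $T(0,h)$ now playing the role that the right triangles $F_\pm$ played there. Concretely, I apply Lemma~\ref{lemtrace} twice, both times using the first two eigenfunctions $u_1,u_2$ of the unknown triangle $T(0,b)$: once comparing $T(0,b)$ with $E = T(0,\sqrt{3})$, and once comparing $T(0,b)$ with $T(0,h)$. Because $T(0,b)$ is symmetric about the $y$-axis, each eigenfunction $u_j$ has definite parity in $x$, so $\int u_{j,x}u_{j,y}\,dA = 0$; hence $\delta_2=0$ in Lemma~\ref{lemtrace}, and both applications reduce to conditions on the single unknown parameter $\gamma_2 \in [0,1]$.

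With $a=c=0$, $d=\sqrt{3}$, $n=2$ and $C = C(b)$, Lemma~\ref{lemtrace} yields $\Lambda_2(0,b) > C(b)\Lambda_2(0,\sqrt{3})$ whenever $\gamma_2 < \gamma_2^{(1)} := 3(1 - C(b))/[(b^2-3)C(b)]$. With $a = c = 0$, $d = h$, $n = 2$, $C = C(b)/\widetilde{C}(h)$, combined with the hypothesis \eqref{lemmahyp}, the same conclusion holds whenever $\gamma_2 > \gamma_2^{(2)} := h^2(C(b) - \widetilde{C}(h))/[(h^2-b^2)C(b)]$. The desired inequality therefore follows for every value of $\gamma_2 \in [0,1]$ provided the two open conditions jointly cover $[0,1]$, i.e.\ provided
\begin{equation*}
\gamma_2^{(2)} < \gamma_2^{(1)}.
\end{equation*}

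The heart of the argument is to verify this threshold comparison for all $\sqrt{3} < b < h$. Clearing denominators reduces it to the polynomial inequality
\begin{equation*}
3\bigl(1 - C(b)\bigr)(h^2 - b^2) > h^2\bigl(C(b) - \widetilde{C}(h)\bigr)(b^2 - 3),
\end{equation*}
and I would exploit the factorizations $1 - C(b) = (b^2 - 3)(17b^2 + 3)/[20 b^2(b^2+1)]$ and $1 - \widetilde{C}(h) = 27(h^2 - 3)/(40 h^2)$, both of which already expose the critical factors $b^2 - 3$ and $h^2 - 3$. Writing $C(b) - \widetilde{C}(h) = (1 - \widetilde{C}(h)) - (1 - C(b))$ and regrouping, I expect the difference of the two sides to collapse to $7(b^2 - 3)^2(h^2 - 3)/[40(b^2 + 1)]$, which is strictly positive on $\sqrt{3} < b < h$. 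This algebraic collapse is the main obstacle, and it is where the particular form of the auxiliary function $\widetilde{C}(h)$ is evidently engineered: any careless modification would destroy the clean cancellation and with it the coverage condition $\gamma_2^{(2)} < \gamma_2^{(1)}$. As a sanity check, at the equilateral endpoint $b \to \sqrt{3}$ both thresholds meet at $\gamma_2 = 27/40$ independently of $h$, which confirms that $\widetilde{C}$ is tuned correctly.
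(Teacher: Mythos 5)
Your proposal is correct and follows essentially the same route as the paper: two applications of Lemma~\ref{lemtrace} (to $E$ and to $T(0,h)$) yield the two thresholds \eqref{Ceh1} and \eqref{Ceh2} on $\gamma_2$, and the lemma reduces to showing the thresholds overlap. Your explicit factorization of the difference as $7(b^2-3)^2(h^2-3)/[40(b^2+1)]$ checks out and is just a cleaner packaging of the paper's final step, which instead isolates $\widetilde{C}(h)$ and uses monotonicity in $b$ down to the equality case $b=\sqrt{3}$.
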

\begin{lemma} \label{evalCh}
Let $h = 5/2$. Then $\Lambda_2(0,h) > \widetilde{C}(h) \Lambda_2(0,\sqrt{3})$.
\end{lemma}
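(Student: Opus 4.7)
Plan: First make the target completely explicit. With $h=5/2$ one has $h^2=25/4$, so
\[
\widetilde{C}(5/2) = \frac{13\cdot 25/4 + 81}{40\cdot 25/4} = \frac{649/4}{250} = \frac{649}{1000},
\]
and since $\Lambda_2(0,\sqrt{3}) = 40\pi^2/9$, the lemma reduces to the single numerical inequality
\[
\Lambda_2(0,5/2) \;>\; \frac{649\,\pi^2}{225} \;\approx\; 28.47 .
\]
My strategy is to lower-bound $\lambda_1(0,5/2)$ and $\lambda_2(0,5/2)$ separately and sum, using the two tools already in play in the proof of Theorem~\ref{th:secondtone}.

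For $\lambda_2(0,5/2)$ I would reuse the sector inscription from the $b\geq 5/2$ branch earlier in this section: $T(0,5/2)$ sits inside the circular sector with apex $(0,5/2)$, aperture $\beta = 2\arctan(2/5)$, and radius $D=\sqrt{29}/2$, so by domain monotonicity
\[
\lambda_2(0,5/2) \;\geq\; \frac{j_{\pi/\beta,2}^{\,2}}{D^2} \;>\; \frac{504}{29},
\]
using $j_{\pi/\beta,2}^2 > 126$ as in the author's earlier sector argument. For $\lambda_1(0,5/2)$ I would begin with the P\'olya--Szeg\H{o} inequality $\lambda_1 A\geq 4\pi^2/\sqrt{3}$, which at area $5/2$ yields $\lambda_1(0,5/2)\geq 8\pi^2/(5\sqrt{3})$.

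The main obstacle is that these two ``free'' bounds together give only $\Lambda_2(0,5/2)\geq 8\pi^2/(5\sqrt{3})+504/29 \approx 26.50$, slightly short of the required $\approx 28.47$. To bridge the remaining gap of roughly $2$ I would sharpen using the isosceles symmetry: bisecting $T(0,5/2)$ along its axis of symmetry into a right half-triangle $H$ with legs $1$ and $5/2$ and area $5/4$, P\'olya--Szeg\H{o} applied to $H$ gives a bound on the first antisymmetric eigenvalue, $\lambda_1^{a}(T(0,5/2)) = \lambda_1(H) \geq 16\pi^2/(5\sqrt{3}) \approx 18.24$. Since Observation~\ref{subeq} indicates the second mode of this subequilateral triangle is symmetric, one further replaces the whole-triangle sector bound on $\lambda_2$ by its refined ``half-sector'' analog on $H$ (aperture $\arctan(2/5)$, Neumann boundary condition on the symmetry axis, Dirichlet on the hypotenuse and arc), whose second Dirichlet-mixed eigenvalue is bounded below by the corresponding Bessel-zero estimate.

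The hardest step, therefore, is the final arithmetic: one must arrange the refined P\'olya--Szeg\H{o} and Bessel-zero constants so that their sum \emph{strictly} exceeds $649\pi^2/225$, with a verifiable safety margin. If the chain of inscription-based bounds still falls short by a slim amount, a rigorous numerical evaluation of $\lambda_1(0,5/2)$ and $\lambda_2(0,5/2)$ for this one fixed triangle (via Lehmann--Goerisch or an analogous validated scheme) gives a guaranteed backstop, since the numerical values of the two eigenvalues are known to lie comfortably above the required threshold.
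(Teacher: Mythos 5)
Your reduction of the lemma to the single numerical inequality $\Lambda_2(0,5/2) > 649\pi^2/225 \simeq 28.47$ is correct, and you rightly observe that the off-the-shelf bounds (P\'olya--Szeg\H{o} for $\lambda_1$, the circumscribed sector for $\lambda_2$) give only about $26.50$. The genuine gap is that your proposed bridge does not close the remaining deficit. Writing $\lambda_2 = \min\{\lambda_1^a,\lambda_2^s\}$, you would need \emph{both} symmetry classes to exceed $19.35$ (the threshold left after subtracting the P\'olya--Szeg\H{o} bound $8\pi^2/(5\sqrt{3})\simeq 9.12$ for $\lambda_1$). But P\'olya--Szeg\H{o} applied to the half-triangle $H$ of area $5/4$ gives only $\lambda_1^a=\lambda_1(H)\geq 16\pi^2/(5\sqrt{3})\simeq 18.23 < 19.35$, and the ``half-sector'' bound for the symmetric class returns exactly the full-sector value $j_{\nu,2}^2/D^2 \simeq 126/(29/4)\simeq 17.38$, because the low symmetric modes of the sector are the radial overtones $J_\nu(j_{\nu,m}r/D)\cos(\nu\theta)$. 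The true value $\lambda_2(0,5/2)\simeq 19.83$ exceeds the target $19.35$ by only about $2.5\%$, whereas inclusion in the circumscribed sector (or Faber--Krahn on the half) loses roughly $10\%$; no inscription or rearrangement bound of this kind can be expected to succeed. Invoking Observation~\ref{subeq} to discard the antisymmetric class is also not admissible, since the paper states explicitly that this observation has no rigorous proof.

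Your fallback --- a validated numerical certification of $\lambda_2(0,5/2)$ for this one triangle --- is in fact the entire content of the paper's proof, and it is precisely the part you leave unexecuted. The paper constructs an explicit quasimode $\bar u$ as a combination of three symmetric Bessel sector modes $J_{k\nu}(334r/75)\cos(k\nu\theta)$, $k=1,3,5$, applies the Moler--Payne a posteriori bound with $\epsilon<0.009$ to certify an exact eigenvalue in $(19.65,\,20.03)$, and then --- a step absent from your sketch --- uses domain monotonicity against the circumscribed sector, whose third eigenvalue is $j_{2\nu,1}^2/D^2\simeq 21.6>20.03$, to conclude that the certified eigenvalue must be $\lambda_1$ or $\lambda_2$. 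Without carrying out some such validated computation \emph{and} the accompanying identification of the eigenvalue's index, the lemma is not proved.
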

\begin{proof}[Proof of Lemma~\ref{condCh}] Suppose $\sqrt{3} < b < h$.
By Lemma \ref{lemtrace} with $n=2$, we have $\Lambda_2(0,b) > C(b) \Lambda_2(0,\sqrt{3})$ if
\[
(1-\gamma_2) + \frac{b^2}{3} \gamma_2 < \frac{1}{C(b)} ,
\]
which is equivalent to
\begin{equation} \label{Ceh1}
  \gamma_2 < 3 \Big( \frac{1}{C(b)} - 1 \Big) \Big/ (b^2-3) .
\end{equation}
On the other hand, by Lemma \ref{lemtrace} and hypothesis \eqref{lemmahyp}, we have
\[
\Lambda_2(0,b) > \frac{C(b)}{\widetilde{C}(h)} \Lambda_2(0,h) \geq C(b) \Lambda_2(0,\sqrt{3})
\]
if
\[
(1-\gamma_2) + \frac{b^2}{h^2} \gamma_2 < \frac{\widetilde{C}(h)}{C(b)} ,
\]
which is equivalent to
\begin{equation} \label{Ceh2}
  \gamma_2 > h^2 \Big( 1 - \frac{\widetilde{C}(h)}{C(b)} \Big) \Big/ (h^2-b^2) .
\end{equation}
Our task is to prove that at least one of \eqref{Ceh1} or \eqref{Ceh2} holds, which is guaranteed if the right side of \eqref{Ceh1} is greater than the right side of \eqref{Ceh2}. Thus we want to prove
\[
  3 \Big( \frac{1}{C(b)} - 1 \Big) \Big/ (b^2-3) > h^2 \Big( 1 - \frac{\widetilde{C}(h)}{C(b)} \Big) \Big/ (h^2-b^2) ,
\]
which simplifies to
\[
\widetilde{C}(h) > \frac{3(h^2 + 17)}{20 h^2} + \frac{7 (h^2 - 3)}{10 h^2 (b^2 + 1)} .
\]
The right side is a strictly decreasing function of $b$, and so it suffices to verify that the inequality holds with ``$\geq$'' at $b=\sqrt{3}$. Indeed it holds there with equality, by definition of $\widetilde{C}(h)$.
\end{proof}

\begin{proof}[Proof of Lemma~\ref{evalCh}]
P\'{o}lya and Szeg\H{o}'s result of Faber--Krahn type says that $\lambda_1 A \geq 4\pi^2/\sqrt{3}$ for all triangles, with equality for the equilateral triangle \cite[Note A]{PS51}. Applying this bound to the subequilateral $T(0,h)$ gives that
\[
\lambda_1(0,h) \geq \frac{4\pi^2}{\sqrt{3}h} .
\]
Hence our goal $\Lambda_2(0,h) > \widetilde{C}(h) \Lambda_2(0,\sqrt{3})$ will follow if we establish the lower bound
\begin{equation} \label{nineteen35}
  \lambda_2(0,h) \geq \frac{13h^2+81}{40h^2} \, \frac{40\pi^2}{9} - \frac{4\pi^2}{\sqrt{3}h} \simeq 19.35 ,
\end{equation}
where the final line used that $\Lambda_2(0,\sqrt{3}) = 40\pi^2/9$ and $h=5/2$.

Our task is to rigorously estimate the second eigenvalue of the single triangle $T(0,h)$. We will employ a method discovered by Fox, Henrici and Moler \cite{FHM}, or rather, a subsequent improvement due to Moler and Payne \cite[Theorem~3]{PM}. Consider a bounded plane domain $\Omega$ with piecewise smooth boundary and area $A$. Let $\bar{u}$ be an eigenfunction of the Laplacian with eigenvalue $\bar{\lambda}$ that satisfies the Dirichlet boundary condition only approximately. More precisely, assume $-\Delta \bar{u} = \bar{\lambda} \bar{u}$ on $\Omega$, and that $\bar{u}$ is smooth on $\overline{\Omega}$. Then there exists a Dirichlet eigenvalue $\lambda$ on $\Omega$ such that
\begin{equation} \label{fox}
\frac{\bar{\lambda}}{1-\epsilon} \geq \lambda \geq \frac{\bar{\lambda}}{1+\epsilon}
\end{equation}
where
\[
    \epsilon=\frac{\sqrt{A} \lVert \bar{u} \rVert_{L^\infty(\partial \Omega)}}{\lVert \bar{u} \rVert_{L^2(\Omega)}}.
\]

Let $\Omega$ be a copy of the triangle $T(0,h)$ that has been translated to move the vertex $(0,h)$ to the origin, and then rotated to make the triangle symmetric with respect to the positive $x$-axis. The area equals $h$. We need to find a suitable eigenfunction $\bar{u}$ that approximately satisfies Dirichlet boundary conditions on $\partial \Omega$. A method for constructing this function is described by Fox \emph{et al.}\ \cite{FHM}: one should take an optimized linear combination of eigenfunctions of a circular sector. Here we find a good choice to be
\begin{align*}
\bar{u}(r,\theta)
& = J_\nu \! \left(\frac{334r}{75}\right)\cos(\nu \theta) + \frac{5}{22}J_{3\nu} \! \left(\frac{334r}{75}\right) \cos(3\nu \theta) \\
& \qquad - \frac{2225}{53} J_{5\nu} \! \left(\frac{334r}{75}\right) \cos(5\nu \theta),
\end{align*}
where $\nu=\pi/\alpha$ and $\alpha=2\arctan(1/h) \simeq 0.761$ is the aperture of the triangle. The eigenvalue for $\bar{u}$ is $\bar{\lambda}=(334/75)^2 \simeq 19.832$. (The motivation for our choice of $\bar{u}$ is as follows. The second eigenfunction of the sector of radius $h$ and aperture $\alpha$ is $J_\nu(j_{\nu,2}r/h)\cos(\nu \theta)$, with eigenvalue $(j_{\nu,2}/h)^2$. One computes that $j_{\nu,2}/h \simeq 4.49$, which is close to $334/75 \simeq 4.45$. The other two functions in the linear combination for $\bar{u}$ are higher symmetric eigenfunctions of a sector; their role is to reduce the magnitude of $\bar{u}$ on the short side of the isosceles triangle, in other words, to compensate for the fact that we need an approximate Dirichlet condition on a triangle and not on a sector. Lastly, the rational value $334/75$ is chosen to come close to minimizing the error $\epsilon$ below.)

Now we estimate $\epsilon$. First calculate the $L^2$-norm of $\bar{u}$ on the circular sector with radius $h$ and aperture $\alpha$; the norm is greater than $0.25$. This sector lies inside $\Omega$, and so
\[
  \lVert\bar{u}\rVert_{L^2(\Omega)} > 0.25 .
\]
Next, the function $\bar{u}$ equals $0$ on the two equal sides of the isosceles triangle $\Omega$, and the third side has polar equation $r=h/\cos\theta$. Thus the $L^\infty$ norm of $\bar{u}$ on $\partial \Omega$ is
\[
\lVert \bar{u} \rVert_{L^\infty(\partial \Omega)} = \max_{|\theta| \leq \alpha/2} |\bar{u}(h/\cos\theta,\theta)| < 0.0013 ,
\]
by a numerical estimate. (Instead of finding this maximum numerically, one could find a rigorous upper bound by dividing the boundary into small intervals and then evaluating $\bar{u}$ at one point of each interval and employing crude global estimates on the derivatives of $\bar{u}$. Thus the estimation of the $L^\infty$ norm can, in principle, be achieved using only finitely many function evaluations.)

Combining the last two estimates gives that $\epsilon < 0.009$, and so \eqref{fox} guarantees the existence of an eigenvalue $\lambda$ of $\Omega$ with
\[
  20.03 > \frac{19.84}{1-0.009} > \lambda > \frac{19.83}{1+0.009} > 19.65 .
\]
This lower bound is well above the value $19.35$ in \eqref{nineteen35}. Thus if $\lambda$ is the first or second eigenvalue of $\Omega$, then we are done. Fortunately, $\lambda$ cannot be the third (or higher) eigenvalue, by domain monotonicity, since the third Dirichlet eigenvalue of the circular sector containing $T(0,h)$ is $j_{2\nu,1}^2/(1+h^2) \simeq 21.6$, which is larger than $\lambda$.
\end{proof}


\section{\bf Monotonicity of eigenvalues for isosceles triangles}
\label{sec:isos}

In this section we present monotonicity results for low eigenvalues of isosceles triangles.

Consider the isosceles triangle $T(\alpha)$ having aperture
$\alpha \in (0,\pi)$ and equal sides of length $l$, with vertex at the origin.
After rotating the triangle to make it symmetric about the
positive $x$-axis, it can be written as
\[
T(\alpha) = \big\{ (x,y) : 0<x<l\cos(\alpha/2), |y|<x\tan(\alpha/2) \big\} .
\]

Write $\lambda_1(\alpha), \lambda_a(\alpha)$ and $\lambda_s(\alpha)$ for the fundamental tone of $T(\alpha)$, the smallest antisymmetric eigenvalue, and the smallest symmetric eigenvalue greater than $\lambda_1$, respectively. These eigenvalues are plotted numerically in Figure~\ref{figl}, normalized by the square of the sidelength. Figure~\ref{figd} plots them again, this time normalized by the square of the diameter; notice the corners appearing at $\alpha=\pi/3$, due to the diameter switching from $l$ (the length of the two equal sides) to $2l\sin(\alpha/2)$ (the length of the third side) as the aperture passes through $\pi/3$.

\begin{figure}[t]
  \begin{center}
      \subfloat[Sidelength scaling: $\lambda l^2$]{
    \begin{tikzpicture}[scale=6,smooth]
      \draw[<->] (pi/6,1) |- (2.15,0);
      \clip (0.3,1) rectangle (2.2,-0.1);
      \draw (pi/6,0) node[below] {$\frac{\pi}{6}$};
      \draw (pi/3,0) node[below] {$\frac{\pi}{3}$};
      \draw (pi/2,0) node[below] {$\frac{\pi}{2}$};
      \draw (2*pi/3,0) node[below] {$\frac{2\pi}{3}$};
      \draw (2.18,0) node[below] {$\alpha$};
      \draw (pi/6,0) node[left] {$0$};
      \draw[dotted] (pi/3,0) -- (pi/3,1);
      \draw[dotted] (pi/2,0) -- (pi/2,1);
      \draw [mark=+, mark size=0.2] plot coordinates{(pi/6,0) (pi/3,0) (pi/2,0) (2*pi/3,0)};
      \draw [yscale=0.005] plot coordinates{
( 0.5236,104.9618)( 0.5760,93.2249)( 0.6283,84.0877)( 0.6807,76.8433)( 0.7330,71.0173)( 0.7854,66.2811)( 0.8378,62.4011)( 0.8901,59.2077)( 0.9425,56.5755)( 0.9948,54.4103)( 1.0472,52.6405)( 1.0996,51.2118)( 1.1519,50.0822)( 1.2043,49.2199)( 1.2566,48.6008)( 1.3090,48.2074)( 1.3614,48.0274)( 1.4137,48.0531)( 1.4661,48.2813)( 1.5184,48.7125)( 1.5708,49.3512)( 1.6232,50.2058)( 1.6755,51.2890)( 1.7279,52.6181)( 1.7802,54.2160)( 1.8326,56.1117)( 1.8850,58.3418)( 1.9373,60.9524)( 1.9897,64.0011)( 2.0420,67.5602)( 2.0944,71.7212)
     };
     \fill ( 1.3614,48.0274*0.005) circle (0.25pt) node [below] {\tiny $48.03$};
     \draw[dotted] ( pi/6,122.8361*0.005) node[left] {$\frac{7\cdot16\pi^2}{9}$}-- ++(pi/6,0);
      \draw [yscale=0.005] plot coordinates{
( 0.5236,196.3239)( 0.5760,179.5057)( 0.6283,166.3514)( 0.6807,155.9147)( 0.7330,147.5537)( 0.7854,140.8197)( 0.8378,135.3910)( 0.8901,131.0326)( 0.9425,127.5697)( 0.9948,124.8708)( 1.0472,122.8361) ( 1.0996,121.3892)( 1.1519,120.4715)( 1.2043,120.0379)( 1.2566,120.0536)( 1.3090,120.4915)( 1.3614,121.3300)( 1.4137,122.5504)( 1.4661,124.1358)( 1.5184,126.0681)( 1.5708,128.3272)( 1.6232,130.8909)( 1.6755,133.7371)( 1.7279,136.8506)( 1.7802,140.2328)( 1.8326,143.9114)( 1.8850,147.9463)( 1.9373,152.4287)( 1.9897,157.4771)( 2.0420,163.2337)( 2.0944,169.8653)
      };
     \fill ( 1.2243,120.0379*0.005) circle (0.25pt)node [below] {\tiny $120.04$};
      \draw [yscale=0.005,dashed] plot coordinates{
( 0.5236,293.5534)( 0.5760,255.3847)( 0.6283,225.8126)( 0.6807,202.4296)( 0.7330,183.6308)( 0.7854,168.3112)( 0.8378,155.6893)( 0.8901,145.1994)( 0.9425,136.4238)( 0.9948,129.0490)( 1.0472,122.8361) ( 1.0996,117.6014)( 1.1519,113.2018)( 1.2043,109.5256)( 1.2566,106.4854)( 1.3090,104.0127)( 1.3614,102.0543)( 1.4137,100.5700)( 1.4661,99.5300)( 1.5184,98.9142)( 1.5708,98.7107)( 1.6232,98.9155)( 1.6755,99.5328)( 1.7279,100.5742)( 1.7802,102.0601)( 1.8326,104.0202)( 1.8850,106.4950)( 1.9373,109.5376)( 1.9897,113.2165)( 2.0420,117.6195)( 2.0944,122.8585)
      };
     \fill ( 1.5708,98.7107*0.005) circle (0.25pt);
     \draw[dotted] ( pi/6,98.7107*0.005) node[left] {$10\pi^2$}-- ++(pi/3,0);
     \draw[dotted] ( pi/6,52.6405*0.005) node[left] {$\frac{3\cdot16\pi^2}{9}$}-- ++(pi/6,0);
    \end{tikzpicture}
  \label{figl}
    }

    \subfloat[Diameter scaling: $\lambda D^2$]{
    \begin{tikzpicture}[scale=6]
      \draw[<->] (pi/6,1) |- (2.15,0);
      \clip (0.3,1) rectangle (2.2,-0.1);
      \draw (pi/6,0) node[below] {$\frac{\pi}{6}$};
      \draw (pi/3,0) node[below] {$\frac{\pi}{3}$};
      \draw (pi/2,0) node[below] {$\frac{\pi}{2}$};
      \draw (2*pi/3,0) node[below] {$\frac{2\pi}{3}$};
      \draw (2.18,0) node[below] {$\alpha$};
      \draw (pi/6,0) node[left] {\small $0$};
      \draw[dotted] (pi/3,0) -- (pi/3,1);
      \draw[dotted] (pi/2,0) -- (pi/2,1);
      \draw [mark=+, mark size=0.2] plot coordinates{(pi/6,0) (pi/3,0) (pi/2,0) (2*pi/3,0)};
      \draw [yscale=0.003,smooth] plot coordinates{
      ( 0.5236,196.3239)( 0.5760,179.5057)( 0.6283,166.3514)( 0.6807,155.9147)( 0.7330,147.5537)( 0.7854,140.8197)( 0.8378,135.3910)( 0.8901,131.0326)( 0.9425,127.5697)( 0.9948,124.8708)( 1.0472,122.8361)
      };
      \draw [yscale=0.003,smooth] plot coordinates{
      ( 1.0472,122.8361)( 1.0996,132.5593)( 1.1519,142.9427)( 1.2043,154.0403)( 1.2566,165.9100)( 1.3090,178.6121)( 1.3614,192.2081)( 1.4137,206.7587)( 1.4661,222.3201)( 1.5184,238.9403)( 1.5708,256.6544)( 1.6232,275.4823)( 1.6755,295.4329)( 1.7279,316.5176)( 1.7802,338.7777)( 1.8326,362.3168)( 1.8850,387.3284)( 1.9373,414.1086)( 1.9897,443.0577)( 2.0420,474.6806)( 2.0944,509.5960)
     };
\fill ( 1.0472,122.8361*0.003) circle (0.25pt);
\draw[dotted] ( pi/6,122.8361*0.003) node[left] {$\frac{7\cdot 16\pi^2}{9}$}-- ++(pi/6,0);
      \draw [yscale=0.003, dashed,smooth] plot coordinates{
( 0.5236,293.5534)( 0.5760,255.3847)( 0.6283,225.8126)( 0.6807,202.4296)( 0.7330,183.6308)( 0.7854,168.3112)( 0.8378,155.6893)( 0.8901,145.1994)( 0.9425,136.4238)( 0.9948,129.0490)( 1.0472,122.8361)
};
      \draw [yscale=0.003, dashed,smooth] plot coordinates{
( 1.0472,122.8361)( 1.0996,128.4229)( 1.1519,134.3169)( 1.2043,140.5503)( 1.2566,147.1592)( 1.3090,154.1844)( 1.3614,161.6721)( 1.4137,169.6748)( 1.4661,178.2526)( 1.5184,187.4749)( 1.5708,197.4213)( 1.6232,208.1848)( 1.6755,219.8736)( 1.7279,232.6149)( 1.7802,246.5592)( 1.8326,261.8853)( 1.8850,278.8076)( 1.9373,297.5847)( 1.9897,318.5317)( 2.0420,342.0353)( 2.0944,368.5754)
      };
      \draw [yscale=0.003,smooth] plot coordinates{
      ( 0.5236,104.9618)( 0.5760,93.2249)( 0.6283,84.0877)( 0.6807,76.8433)( 0.7330,71.0173)( 0.7854,66.2811)( 0.8378,62.4011)( 0.8901,59.2077)( 0.9425,56.5755)( 0.9948,54.4103)( 1.0472,52.6405)
      };
       \draw [yscale=0.003,smooth] plot coordinates{
      ( 1.0472,52.6405)( 1.0996,55.9242)( 1.1519,59.4238)( 1.2043,63.1621)( 1.2566,67.1647)( 1.3090,71.4608)( 1.3614,76.0838)( 1.4137,81.0719)( 1.4661,86.4691)( 1.5184,92.3262)( 1.5708,98.7024)( 1.6232,105.6667)( 1.6755,113.3003)( 1.7279,121.6988)( 1.7802,130.9763)( 1.8326,141.2689)( 1.8850,152.7408)( 1.9373,165.5916)( 1.9897,180.0653)( 2.0420,196.4637)( 2.0944,215.1635)
      };
\fill ( 1.0472,52.6405*0.003) circle (0.25pt);
\draw[dotted] ( pi/6,52.6405*0.003) node[left] {$\frac{3\cdot 16\pi^2}{9}$}-- ++(pi/6,0);
    \end{tikzpicture}
  \label{figd}
    }
  \end{center}
  \caption{Plots of the fundamental tone, the smallest antisymmetric tone (dashed), and the smallest symmetric tone larger than the fundamental tone, for the isosceles triangle $T(\alpha)$ with aperture $\alpha$. Global minimum points are indicated with dots.}
  \label{fig:tones}
\end{figure}
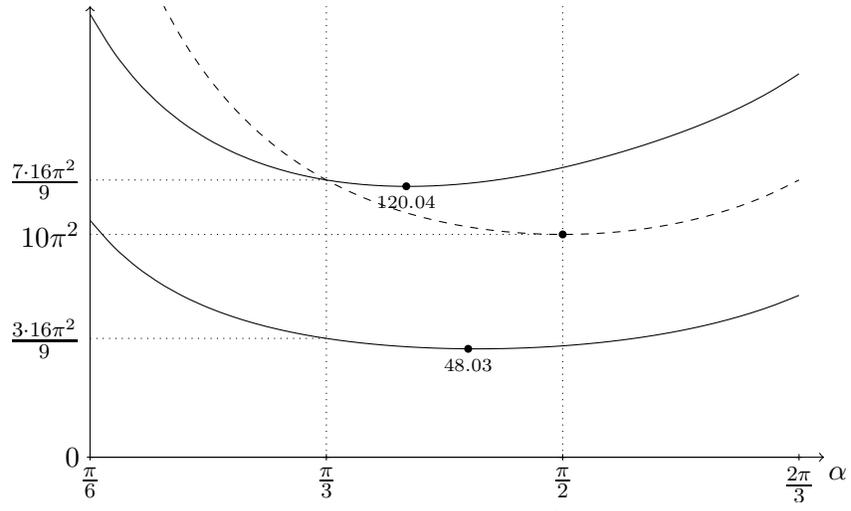
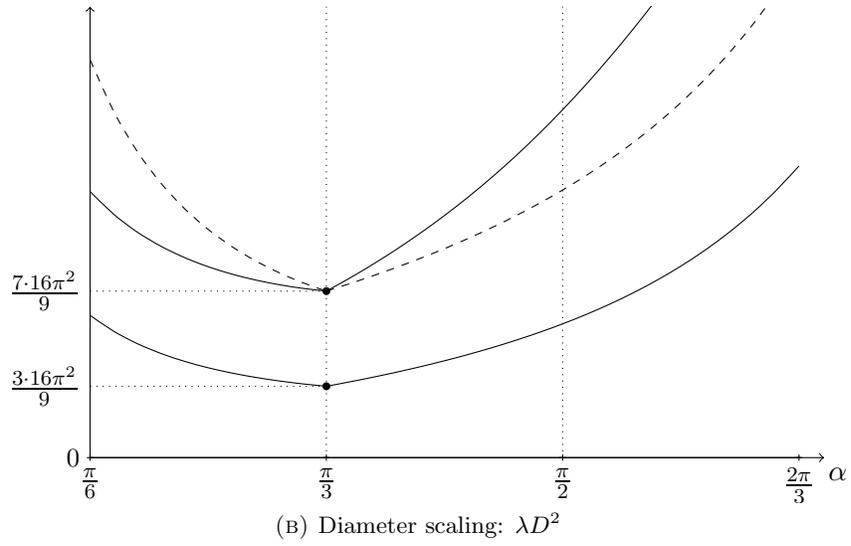

\afterpage{\clearpage}

\begin{figure}[t]
  \begin{center}
    \subfloat[Perimeter scaling: $\lambda L^2$]{
    \begin{tikzpicture}[scale=6,smooth]
      \draw[<->] (pi/6,1) |- (2.15,0);
      \clip (0.3,1) rectangle (2.2,-0.1);
      \draw (pi/6,0) node[below] {$\frac{\pi}{6}$};
      \draw (pi/3,0) node[below] {$\frac{\pi}{3}$};
      \draw (pi/2,0) node[below] {$\frac{\pi}{2}$};
      \draw (2*pi/3,0) node[below] {$\frac{2\pi}{3}$};
      \draw (2.18,0) node[below] {$\alpha$};
      \draw (pi/6,0) node[left] {\small $0$};
      \draw[dotted] (pi/3,0) -- (pi/3,1);
      \draw[dotted] (pi/2,0) -- (pi/2,1);
      \draw [mark=+, mark size=0.2] plot coordinates{(pi/6,0) (pi/3,0) (pi/2,0) (2*pi/3,0)};
      \draw [yscale=0.05,smooth] plot coordinates{
( 0.5236,12.4425)( 0.5760,11.8367)( 0.6283,11.4007)( 0.6807,11.0940)( 0.7330,10.8894)( 0.7854,10.7678)( 0.8378,10.7161)( 0.8901,10.7246)( 0.9425,10.7868)( 0.9948,10.8978)
( 1.0472,11.0543)( 1.0996,11.2542)( 1.1519,11.4963)( 1.2043,11.7801)( 1.2566,12.1054)( 1.3090,12.4726)( 1.3614,12.8824)( 1.4137,13.3354)( 1.4661,13.8321)( 1.5184,14.3728)( 1.5708,14.9570)( 1.6232,15.5838)( 1.6755,16.2521)( 1.7279,16.9611)( 1.7802,17.7120)( 1.8326,18.5091)( 1.8850,19.3613)( 1.9373,20.2819)( 1.9897,21.2883)( 2.0420,22.4022)( 2.0944,23.6493)
     };
     \fill ( 0.8378,10.7161*0.05) circle (0.25pt) node [below] {\tiny $1071.6$};
\draw[dotted] ( pi/6,11.0543*0.05) node[left] {$112\pi^2$}-- ++(pi/6,0);
      \draw [yscale=0.05, dashed,smooth] plot coordinates{
( 0.5236,18.6046)( 0.5760,16.8402)( 0.6283,15.4758)( 0.6807,14.4038)( 0.7330,13.5518)( 0.7854,12.8700)( 0.8378,12.3227)( 0.8901,11.8842)( 0.9425,11.5355)( 0.9948,11.2624)
( 1.0472,11.0543)( 1.0996,10.9030)( 1.1519,10.8026)( 1.2043,10.7484)( 1.2566,10.7373)( 1.3090,10.7669)( 1.3614,10.8358)( 1.4137,10.9437)( 1.4661,11.0905)( 1.5184,11.2772)( 1.5708,11.5053)( 1.6232,11.7772)( 1.6755,12.0959)( 1.7279,12.4657)( 1.7802,12.8915)( 1.8326,13.3797)( 1.8850,13.9382)( 1.9373,14.5767)( 1.9897,15.3072)( 2.0420,16.1448)( 2.0944,17.1081)
      };
\fill ( 1.2566,10.7373*0.05) circle (0.25pt) node [below] {\tiny $1073.7$};
      \draw [yscale=0.05,smooth] plot coordinates{
( 0.5236, 6.6527)( 0.5760, 6.1477)( 0.6283, 5.7632)( 0.6807, 5.4681)( 0.7330, 5.2413)( 0.7854, 5.0685)( 0.8378, 4.9392)( 0.8901, 4.8462)( 0.9425, 4.7840)( 0.9948, 4.7487)( 1.0472, 4.7375)( 1.0996, 4.7482)( 1.1519, 4.7795)( 1.2043, 4.8305)( 1.2566, 4.9008)( 1.3090, 4.9904)( 1.3614, 5.0997)( 1.4137, 5.2293)( 1.4661, 5.3802)( 1.5184, 5.5540)( 1.5708, 5.7525)( 1.6232, 5.9780)( 1.6755, 6.2334)( 1.7279, 6.5222)( 1.7802, 6.8487)( 1.8326, 7.2180)( 1.8850, 7.6365)( 1.9373, 8.1120)( 1.9897, 8.6540)( 2.0420, 9.2745)( 2.0944, 9.9884)
      };
\fill ( 1.0472, 4.7375*0.05) circle (0.25pt);
\draw[dotted] ( pi/6,4.7375*0.05) node[left] {$48\pi^2$}-- ++(pi/6,0);
    \end{tikzpicture}
  \label{figp}
    }

    \subfloat[Area scaling: $\lambda A$]{
    \begin{tikzpicture}[scale=6,smooth]
      \draw[<->] (pi/6,1) |- (2.15,0);
      \clip (0.3,1) rectangle (2.2,-0.1);
      \draw (pi/6,0) node[below] {$\frac{\pi}{6}$};
      \draw (pi/3,0) node[below] {$\frac{\pi}{3}$};
      \draw (pi/2,0) node[below] {$\frac{\pi}{2}$};
      \draw (2*pi/3,0) node[below] {$\frac{2\pi}{3}$};
      \draw (2.18,0) node[below] {$\alpha$};
      \draw (pi/6,0) node[left] {\small $0$};
      \draw[dotted] (pi/3,0) -- (pi/3,1);
      \draw[dotted] (pi/2,0) -- (pi/2,1);
      \draw [mark=+, mark size=0.2] plot coordinates{(pi/6,0) (pi/3,0) (pi/2,0) (2*pi/3,0)};
      \draw [yscale=0.12,smooth] plot coordinates{
( 0.5236, 4.9075)( 0.5760, 4.8877)( 0.6283, 4.8884)( 0.6807, 4.9055)( 0.7330, 4.9362)( 0.7854, 4.9783)( 0.8378, 5.0303)( 0.8901, 5.0911)( 0.9425, 5.1598)( 0.9948, 5.2358)
( 1.0472, 5.3185)( 1.0996, 5.4075)( 1.1519, 5.5023)( 1.2043, 5.6027)( 1.2566, 5.7084)( 1.3090, 5.8187)( 1.3614, 5.9333)( 1.4137, 6.0514)( 1.4661, 6.1721)( 1.5184, 6.2940)( 1.5708, 6.4155)( 1.6232, 6.5346)( 1.6755, 6.6492)( 1.7279, 6.7571)( 1.7802, 6.8570)( 1.8326, 6.9488)( 1.8850, 7.0334)( 1.9373, 7.1131)( 1.9897, 7.1907)( 2.0420, 7.2694)( 2.0944, 7.3523)
     };
     \fill ( 0.5960, 4.8877*0.12) circle (0.25pt) node [below] {\tiny $48.88$};
     \draw[dotted] ( pi/6,5.3185*0.12) node[left] {$\frac{28\pi^2}{3\sqrt{3}}$}-- ++(pi/6,0);
      \draw [yscale=0.12, dashed,smooth] plot coordinates{
( 0.5236, 7.3379)( 0.5760, 6.9538)( 0.6283, 6.6358)( 0.6807, 6.3690)( 0.7330, 6.1431)( 0.7854, 5.9501)( 0.8378, 5.7845)( 0.8901, 5.6415)( 0.9425, 5.5180)( 0.9948, 5.4110)
( 1.0472, 5.3185)( 1.0996, 5.2387)( 1.1519, 5.1703)( 1.2043, 5.1121)( 1.2566, 5.0632)( 1.3090, 5.0230)( 1.3614, 4.9907)( 1.4137, 4.9661)( 1.4661, 4.9487)( 1.5184, 4.9384)( 1.5708, 4.9350)( 1.6232, 4.9384)( 1.6755, 4.9488)( 1.7279, 4.9662)( 1.7802, 4.9908)( 1.8326, 5.0230)( 1.8850, 5.0633)( 1.9373, 5.1122)( 1.9897, 5.1705)( 2.0420, 5.2389)( 2.0944, 5.3187)
      };
     \fill ( 1.5708, 4.9350*0.12) circle (0.25pt) node [below] {\tiny $5\pi^2\approx 49.35$};
      \draw [yscale=0.12,smooth] plot coordinates{
( 0.5236, 2.6239)( 0.5760, 2.5386)( 0.6283, 2.4712)( 0.6807, 2.4178)( 0.7330, 2.3759)( 0.7854, 2.3433)( 0.8378, 2.3186)( 0.8901, 2.3006)( 0.9425, 2.2884)( 0.9948, 2.2815)( 1.0472, 2.2793)( 1.0996, 2.2814)( 1.1519, 2.2875)( 1.2043, 2.2974)( 1.2566, 2.3110)( 1.3090, 2.3281)( 1.3614, 2.3488)( 1.4137, 2.3730)( 1.4661, 2.4007)( 1.5184, 2.4322)( 1.5708, 2.4674)( 1.6232, 2.5067)( 1.6755, 2.5503)( 1.7279, 2.5984)( 1.7802, 2.6514)( 1.8326, 2.7098)( 1.8850, 2.7741)( 1.9373, 2.8450)( 1.9897, 2.9231)( 2.0420, 3.0095)( 2.0944, 3.1053)
      };
     \fill ( 1.0472, 2.2793*0.12) circle (0.25pt);
     \draw[dotted] ( pi/6,2.2793*0.12) node[left] {$\frac{4\pi^2}{\sqrt{3}}$}-- ++(pi/6,0);
    \end{tikzpicture}
  \label{figa}
    }
  \end{center}
  \caption{Plots of the fundamental tone, the smallest antisymmetric tone (dashed), and the smallest symmetric tone larger than the fundamental tone, for the isosceles triangle $T(\alpha)$ of aperture $\alpha$. Global minimum points are indicated with dots.}
  \label{figLA}
\end{figure}

The eigenvalues were computed by the PDE Toolbox in Matlab, using the finite element method with about 1 million triangles. To ensure good precision we have avoided degenerate cases, restricting to $\pi/6 < \alpha < 2\pi/3$.

The Figures suggest several monotonicity conjectures. We will prove two of them, for the fundamental tone and the lowest antisymmetric eigenvalue.
\begin{proposition}[Fundamental tone] \ \label{prop:fund}

\noindent (i). $\lambda_1(\alpha) l^2$ is strictly decreasing for $0<\alpha \leq \pi/3$ and strictly increasing for $\pi/2 \leq \alpha < \pi$, and it tends to infinity at the endpoints $\alpha=0,\pi$.

\noindent (ii). $\lambda_1(\alpha) D^2$ is strictly decreasing for $0<\alpha \leq \pi/3$ and strictly increasing for $\pi/3 \leq \alpha < \pi$. The same is true under perimeter and area scalings (see Figure~\ref{figLA}), except that we do not claim ``strictness'' under area scaling.
\end{proposition}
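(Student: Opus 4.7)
The plan is to derive a sharp Hadamard-type formula for $d\lambda_1/d\alpha$ from the linear-transformation Lemma~\ref{lemtrace}, reduce monotonicity of the scale-invariant functional to a sign condition on boundary data via a Rellich--Pohozaev identity, and then extract the sign via corner-singularity analysis. Part~(ii) will follow by combining part~(i) with containment.

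Switching to the Section~\ref{isec3} parametrization $T(0,b)$ with $b=\cot(\alpha/2)$, I set $\mu(b)=\lambda_1(0,b)(1+b^2)=\lambda_1 l^2$; the subequilateral range $\alpha\in(0,\pi/3]$ corresponds to $b\in[\sqrt{3},\infty)$ and the range $\alpha\in[\pi/2,\pi)$ to $b\in(0,1]$. Applying Lemma~\ref{lemtrace} to compare $T(0,b+\epsilon)$ with $T(0,b)$ and vice versa, then letting $\epsilon\to 0^+$, one obtains the Hadamard-type identity
\[
\frac{d\lambda_1(0,b)}{db}=-\frac{2I_y}{b}, \qquad I_y=\int_{T(0,b)}u_y^2\,dA,
\]
for the $L^2$-normalized fundamental mode $u$. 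Combined with $I_x+I_y=\lambda_1$, this gives $\mu'(b)=(2/b)(b^2 I_x-I_y)$, so monotonicity of $\mu$ reduces to the sign of $b^2 I_x-I_y$.

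To evaluate this sign, I use the Rellich identities $I_x=\tfrac12\int_{\partial T}x(\partial_n u)^2 n_x\,ds$ and $I_y=\tfrac12\int_{\partial T}y(\partial_n u)^2 n_y\,ds$. Since $n_x$ and $y$ both vanish on the base, only the equal sides contribute; parametrizing each by $t\in[0,1]$ with $t=0$ at a base corner and $t=1$ at the apex, the reflection symmetry yields
\[
I_x=b\!\int_0^1(1-t)f(t)^2\,dt, \qquad I_y=b\!\int_0^1 tf(t)^2\,dt,
\]
where $f(t)=|\partial_n u|$ on an equal side. At the equilateral $b=\sqrt{3}$, three-fold symmetry makes $f$ symmetric about $t=1/2$, so $I_x=I_y$ and $\mu'(\sqrt{3})>0$ strictly. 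For $b>\sqrt{3}$ the apex is the sharpest corner, and the Dirichlet corner expansion $u\sim r^{\pi/\beta}\sin(\pi\theta/\beta)$ makes $f$ vanish to higher order as $t\to 1$ than as $t\to 0$, biasing $f^2\,dt$ toward small $t$ and giving $I_x>I_y$; with $b^2\ge 3$ this yields $b^2 I_x>I_y$ and $\mu'(b)>0$, the strict decrease of $\lambda_1 l^2$ in $\alpha$ on $(0,\pi/3]$. For $b\in(0,1]$ the roles reverse (apex obtuse, base corners sharp), so $I_y>I_x\ge b^2 I_x$, $\mu'(b)<0$, giving the strict increase of $\lambda_1 l^2$ on $[\pi/2,\pi)$. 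Divergence at $\alpha\to 0,\pi$ follows from the inradius of $T(\alpha)$ shrinking to zero together with a standard inradius lower bound on $\lambda_1$.

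Part~(ii) assembles from part~(i) and containment. On $(0,\pi/3]$ diameter equals sidelength, so the $D$-scaled claim reduces to part~(i). On $[\pi/3,\pi)$, for $\pi/3\le\alpha_1<\alpha_2<\pi$ the rescaling $[\sin(\alpha_2/2)/\sin(\alpha_1/2)]\,T(\alpha_1)$ strictly contains $T(\alpha_2)$ by convexity (they share the base of $T(\alpha_2)$ and the scaled apex is strictly higher), so strict domain monotonicity gives the strict increase of $\lambda_1 D^2$. For perimeter scaling, $L=2l(1+\sin(\alpha/2))$ is strictly monotone on each side of $\alpha=\pi/3$ and combines with the preceding. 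For area scaling, the P\'olya--Szeg\H{o} bound $\lambda_1 A\ge 4\pi^2/\sqrt{3}$ combined with the same containment supplies the claimed non-strict monotonicity. The main obstacle throughout is turning the corner-asymptotic heuristic into the rigorous global inequality $\int_0^1(1-t)f^2\,dt>\int_0^1 tf^2\,dt$; my preferred route is a continuity argument starting from equality $I_x=I_y$ at the equilateral and tracking the sign of $I_x-I_y$ via the same Hadamard mechanism, though adapting the hot-spots and nodal comparison techniques of \cite{LS09a,LS10} is a viable alternative.
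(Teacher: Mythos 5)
Your Hadamard formula $d\lambda_1/db=-2I_y/b$ and the Rellich reduction to the sign of $b^2I_x-I_y$ are correct and elegant, but the proof has a genuine gap exactly where you flag it, and it is not a technicality. The corner expansion controls $f=|\partial_n u|$ only in neighborhoods of the vertices; it says nothing about how $f^2\,dt$ distributes mass over the interior of the side, so it does not yield the global inequality $\int_0^1(1-t)f^2\,dt>\int_0^1 tf^2\,dt$. Equivalently, you are claiming $\gamma_1=I_y/\lambda_1<b^2/(1+b^2)$ for all $b>\sqrt{3}$ (and the reverse for $b<1$) --- precisely the kind of pointwise estimate on $\gamma$ that the paper's Method of the Unknown Trial Function is designed to \emph{avoid} (see the Remark after Proposition~\ref{unknown}: ``the proof avoids estimating $\gamma_n$''). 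The proposed rescues (a continuity argument tracking the sign of $I_x-I_y$, or ``adapting hot-spots techniques'') are not developed and do not obviously close the loop: continuity from $I_x=I_y$ at $b=\sqrt{3}$ only shows the sign persists until a first crossing, and ruling out a crossing is the whole problem. There are further gaps in part (ii): for perimeter scaling you write $\lambda_1L^2=\lambda_1 l^2\cdot 4(1+\sin(\alpha/2))^2$, which on $(0,\pi/3]$ is a product of a decreasing and an increasing factor, so monotonicity does not ``combine with the preceding''; and for area scaling, the P\'olya--Szeg\H{o} lower bound $\lambda_1A\geq 4\pi^2/\sqrt{3}$ says the equilateral is the global minimizer, which does not imply monotonicity on either side of $\pi/3$.

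The paper's actual proof is much shorter and runs in the opposite logical direction: it \emph{starts} from the area-normalized statement, quoting Siudeja's continuous-symmetrization theorem that $\lambda_1(\alpha)A$ is decreasing on $(0,\pi/3]$ and increasing on $[\pi/3,\pi)$, and then deduces the $D^2$, $L^2$ and $l^2$ statements from the elementary strict monotonicity of the purely geometric ratios $D^2/A$, $L^2/A$ and $l^2/A=2/\sin\alpha$ on the relevant ranges (which is why part (i) is only claimed for $\alpha\geq\pi/2$, where $\sin\alpha$ is decreasing). Your containment argument for $\lambda_1D^2$ on $[\pi/3,\pi)$ is correct and is essentially the paper's own argument for Proposition~\ref{prop:antisymm}(iii), and the divergence at the endpoints via the inradius is fine; but without either proving the $I_x$ versus $I_y$ comparison or importing the symmetrization result for $\lambda_1A$, the subequilateral range and the $L^2$ and $A$ scalings remain unproved.
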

According to Figure \ref{figl}, $\lambda_1(\alpha)l^2$ has a local minimum somewhere between $\pi/3$ and $\pi/2$. Numerically, the minimum occurs at $\alpha \simeq 1.3614$ with value $\lambda_1 l^2\simeq 48.03$. Rigorously, one can prove a weaker fact, that the minimum does not occur at $\alpha=\pi/3$ or $\pi/2$, by applying P\'{o}lya's upper bound \eqref{polyaup} to the superequilateral triangle $T(\alpha)$ with $\alpha=5\pi/12$.
\begin{proposition}[Antisymmetric tone] \ \label{prop:antisymm}

\noindent (i). $\lambda_a(\alpha) l^2$ is strictly decreasing for $0<\alpha \leq \pi/2$ and strictly increasing for $\pi/2 \leq \alpha < \pi$.

\noindent (ii). $\lambda_a(\alpha) A$ is decreasing for $0<\alpha \leq \pi/2$ and increasing for $\pi/2 \leq \alpha < \pi$.

\noindent (iii). $\lambda_a(\alpha) D^2$ is strictly decreasing for $0<\alpha \leq \pi/3$ and strictly increasing for $\pi/3 \leq \alpha < \pi$.
\end{proposition}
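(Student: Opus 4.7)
The plan is to identify the isoceles triangle $T(\alpha)$ of Section~\ref{sec:isos} with the paper's $T(0,b)$ of Section~\ref{sectraces} via $b=\cot(\alpha/2)$; in this normalization the base has length $2$, so $l^2=1+b^2$, $A=b$, $D=2$ for $\alpha\geq\pi/3$ (i.e.\ $b\leq\sqrt 3$), and $D=l$ for $\alpha\leq\pi/3$. Since antisymmetric eigenfunctions of $T(0,b)$ vanish on the symmetry axis $\{x=0\}$, the antisymmetric tone $\lambda_a(\alpha)$ equals the first Dirichlet eigenvalue of the right half of $T(0,b)$.

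Part~(iii) on $[\pi/3,\pi)$ is immediate from strict domain monotonicity: fixing the base at length~$2$ gives $T(0,b)\subsetneq T(0,b')$ whenever $0<b<b'\leq\sqrt 3$, so $\lambda_a$ is strictly decreasing in $b$ on $(0,\sqrt 3]$, and since $D=2$ is constant here, $\lambda_a D^2=4\lambda_a$ is strictly increasing in $\alpha$ on $[\pi/3,\pi)$. Part~(iii) on $(0,\pi/3]$ coincides with part~(i) on that range since $D=l$ there.

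For part~(i), I plan to apply the Method of the Unknown Trial Function (Lemma~\ref{lemtrace}) to the linear map $\tau(x,y)=(x,(b/d)y)$ from $T(0,d)$ to $T(0,b)$, which preserves $x$-antisymmetry, so the antisymmetric ground state $u$ on $T(0,b)$ furnishes a trial function $u\circ\tau$ on $T(0,d)$. The resulting Rayleigh bound is
\[
\lambda_a(T(0,d)) \leq \Gamma + (b/d)^2 E,
\]
where $\Gamma=\int u_x^2$, $E=\int u_y^2$, and $\Gamma+E=\lambda_a(T(0,b))$. Multiplying by $l_d^2=1+d^2$, subtracting from $\lambda_a(T(0,b))\,l_b^2=(\Gamma+E)(1+b^2)$, and simplifying gives
\[
\lambda_a(T(0,b))\,l_b^2 - \lambda_a(T(0,d))\,l_d^2 \geq \frac{(b^2-d^2)(\Gamma d^2-E)}{d^2}.
\]
For $1\leq d<b$ (which, via $b=\cot(\alpha/2)$, covers all pairs of apertures in $(0,\pi/2]$), the right-hand side is strictly positive as soon as $\Gamma d^2>E$, and the inequality $\Gamma\geq E$ on $T(0,b)$ for $b\geq 1$ is therefore sufficient since then $\Gamma d^2\geq\Gamma\geq E$. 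The case $b\leq 1$ is symmetric.

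The main obstacle is this key inequality $\Gamma\geq E$ on $T(0,b)$ for $b\geq 1$, with equality only at $b=1$. Combining the Hadamard variational formula for $\lambda_a(T(0,b))$ (only the two equal sides move as $b$ varies) with the Pohozaev--Rellich identities $\int_{\partial\Omega}(\partial_n u)^2(x\cdot n)\,ds=2\lambda_a$ and $\Gamma=\tfrac12\int_{\partial\Omega}x(\partial_n u)^2 n_x\,ds$ applied to the normalized antisymmetric ground state, one obtains the clean identity
\[
\Gamma-E = \frac{d}{db}\bigl(b\,\lambda_a(T(0,b))\bigr) = \frac{d(\lambda_a A)}{db}.
\]
Parts~(i) and~(ii) then both reduce to monotonicity of $F(b)=\lambda_a A$. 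Since the half-triangles of $T(\alpha)$ and $T(\pi-\alpha)$ are congruent up to relabelling the legs, the similarity scaling gives $F(b)=F(1/b)$, and $b=1$ is automatically a critical point of $F$; ruling out any further critical points is the hardest step. I would attempt this either via continuous Steiner symmetrization of the half-triangle interpolating it with the isoceles right half-triangle, or via a refined two-parameter linear transplantation that propagates the local monotonicity at $b=1$ outward to all $b>1$.
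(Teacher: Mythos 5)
Your part (iii) is correct and essentially identical to the paper's: on $[\pi/3,\pi)$ the diameter is the fixed base and the half-triangles are nested, so strict domain monotonicity of the first Dirichlet eigenvalue of the half-domain does the job; on $(0,\pi/3]$ it reduces to part (i). Your reduction of part (i) is also sound as far as it goes: the transplantation computation is correct, and the identity $\Gamma-E=\tfrac{d}{db}\bigl(b\,\lambda_a(T(0,b))\bigr)=\tfrac{d}{db}(\lambda_a A)$ is genuinely true (one sees it most quickly from $b\,\lambda_a'(b)=-2E$, the derivative of the eigenvalue under the vertical stretch, so $\tfrac{d}{db}(b\lambda_a)=\lambda_a-2E=\Gamma-E$). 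But this identity exposes the problem: your sufficient condition $\Gamma\geq E$ for $b\geq 1$ \emph{is} part (ii), so your argument for part (i) is not an independent proof but a reduction to the one statement you have not proved. And that statement is where all the content of the proposition lives. You explicitly leave it open, offering only two candidate strategies ("continuous Steiner symmetrization of the half-triangle" or a "refined two-parameter linear transplantation") without executing either. That is a genuine gap, not a routine verification: ruling out critical points of $\lambda_a A$ away from $b=1$ is exactly the hard step.

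The paper closes this gap by a different and more economical route. It observes that $\lambda_a(\alpha)$ is the fundamental tone of the right triangle $R(\alpha/2)$ forming half of $T(\alpha)$, and proves (Lemma~\ref{lemmabelow}) that $\lambda_1 A$ of a right triangle is decreasing in its smallest angle on $(0,\pi/4]$, via a two-step \emph{continuous} Steiner symmetrization deforming $R(\alpha)$ into the equal-area $R(\beta)$: first symmetrize perpendicular to the hypotenuse until one moving side reaches the target leg length $y$, then symmetrize perpendicular to that side until the triangle is again right-angled, using that continuous symmetrization never increases $\lambda_1$ and preserves area. This gives part (ii) directly, and then part (i) is a one-line corollary: $\lambda_a l^2=(\lambda_a A)\cdot\frac{2}{\sin\alpha}$, and $2/\sin\alpha$ is strictly decreasing on $(0,\pi/2]$ and strictly increasing on $[\pi/2,\pi)$, so the product inherits strict monotonicity from the second factor even though the first is only weakly monotone. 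So your transplantation machinery for part (i), while correct, is unnecessary once part (ii) is known; what your proposal is missing is precisely the symmetrization argument that proves part (ii).
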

\begin{proof}[Proof of Proposition \ref{prop:fund}]
The isosceles triangle $T(\alpha)$ has area $A=\frac{1}{2}l^2 \sin \alpha$, and diameter
\[
D =
\begin{cases}
l , & 0 < \alpha \leq \frac{\pi}{3} , \\
2l \sin(\alpha/2) , & \frac{\pi}{3} \leq \alpha < \pi ,
\end{cases}
\]
and perimeter $L=2l(1+\sin \big( \alpha/2) \big)$.

Part (ii). It was proved by Siudeja \cite[Theorem 1.3]{S10} using continuous symmetrization that $\lambda_1(\alpha)A$ is decreasing for $\alpha\in(0,\pi/3]$ and increasing for $\alpha\in[\pi/3,\pi)$.

Strict monotonicity for $\lambda_1(\alpha)L^2$ and $\lambda_1(\alpha)D^2$ then follows from strict monotonicity of $L^2/A$ and $D^2/A$, when $\alpha \leq \pi/3$ and when $\alpha \geq \pi/3$.

Part (i). For $\alpha\le \pi/3$ the area is strictly increasing while for $\alpha \geq \pi/2$ it is strictly decreasing. Hence $\lambda_1(\alpha)l^2$ is strictly decreasing for $\alpha \leq \pi/3$ and strictly increasing for $\alpha \geq \pi/2$.
\end{proof}

\begin{proof}[Proof of Proposition \ref{prop:antisymm}]
Part (ii). First, note that $T(\alpha)$ and $T(\pi-\alpha)$ have the same area $A$ and the same antisymmetric tone, $\lambda_a(\alpha)=\lambda_a(\pi-\alpha)$, as indicated in Figure \ref{figsame}.

Assume $\alpha<\pi/2$. The upper part of the acute isosceles triangle $T(\alpha)$ is a right triangle $R(\alpha/2)$. The fundamental tone of $R(\alpha/2)$ equals $\lambda_a(\alpha)$. According to Lemma~\ref{lemmabelow} below, the fundamental tone of $R(\alpha/2)$ is decreasing with $\alpha\in (0,\pi/2]$, when normalized by area. Therefore $\lambda_a(\alpha) A$ is decreasing for $\alpha\in(0,\pi/2]$, and hence is increasing for $\alpha\in[\pi/2,\pi)$ (by replacing $\alpha$ with $\pi-\alpha$).

  \begin{figure}[t]
    \begin{center}
      \begin{tikzpicture}[scale=3]
	\draw (0,0) -- (2,0) -- (1,1/2) -- (1,-1/2) -- (0,0) -- (1,1/2);
	\draw (3/4,1/5) node {$+$};
	\draw (3/4,-1/5) node {$-$};
	\draw (5/4,1/5) node {$-$};
	\draw (1/6,0) node {$\alpha$};
	\draw (1,9/24) node {$\pi-\alpha$};
	\clip (0,0) -- (1,1/2) -- (2,0) -- (1,0) -- (1,-1/2) -- cycle;
	\draw (0,0) circle (1/4);
	\draw (1,1/2) circle (1/4);
      \end{tikzpicture}
    \end{center}
    \caption{Nodal domains for antisymmetric eigenfunctions of $T(\alpha)$ and $T(\pi-\alpha)$.}
    \label{figsame}
  \end{figure}
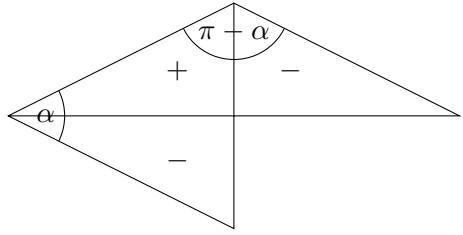

Part (i). Since the area of $T(\alpha)$ is strictly increasing for $\alpha \leq \pi/2$ and strictly decreasing for $\alpha \geq \pi/2$, we deduce that $\lambda_a(\alpha)l^2$ is strictly decreasing for $\alpha \leq \pi/2$ and strictly increasing for $\alpha \geq \pi/2$.

Part (iii). For $\alpha \leq \pi/3$ the diameter of $T(\alpha)$ is fixed, and hence $\lambda_a(\alpha)D^2$ is strictly decreasing for $\alpha\in(0,\pi/3]$.

Let $Q(a)$ be the superequilateral triangle with diameter $D$ having longest side on the interval from $(0,-D/2)$ to $(0,D/2)$ and third vertex at $(a,0)$. Then $Q(a) \varsubsetneqq Q(b)$ if $a<b \leq (\sqrt{3}/2)D$, so that $\lambda_a(a) > \lambda_a(b)$ by domain monotonicity. Hence $\lambda_a(\alpha)D(\alpha)^2$ is strictly increasing for $\alpha\in[\pi/3,\pi)$.
\end{proof}

We must still prove:

\begin{lemma} \label{lemmabelow}
  Let $R(\alpha)$ be a right triangle with smallest angle $\alpha$. Then $\left. \lambda_1 A \right|_{R(\alpha)}$ is decreasing for $\alpha\in(0,\pi/4]$.
\end{lemma}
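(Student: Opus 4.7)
My plan is to parameterize right triangles of fixed area by a single shape parameter and prove monotonicity of the first eigenvalue by combining the linear-transformation Rayleigh machinery of Lemma~\ref{lemtrace} with a reflection/symmetrization argument.

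Fix area $1/2$ and, for $t\ge 0$, let $R_t$ denote the right triangle with vertices $(0,0),(e^t,0),(0,e^{-t})$; its smallest angle $\arctan(e^{-2t})\in(0,\pi/4]$ equals $\pi/4$ only at $t=0$ (the isosceles right triangle). The Lemma is equivalent to showing that $t\mapsto\lambda_1(R_t)$ is strictly increasing on $[0,\infty)$. The area-preserving shear $\tau_h(x,y)=(e^hx,e^{-h}y)$ carries $R_t$ onto $R_{t+h}$; applying Lemma~\ref{lemtrace} with $n=1$ and the $L^2$-normalized first eigenfunction $u_t$ of $R_t$ as trial function yields
\[
\lambda_1(R_{t+h}) \le e^{-2h}\alpha(t) + e^{2h}\beta(t), \qquad \alpha(t)=\!\int_{R_t}\!\! u_{t,x}^2\,dA, \ \beta(t)=\!\int_{R_t}\!\! u_{t,y}^2\,dA,
\]
with $\alpha(t)+\beta(t)=\lambda_1(R_t)$ and equality at $h=0$. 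Hence $\lambda_1'(t)=2(\beta(t)-\alpha(t))$. Reflecting across $y=x$ gives $R_{-t}\cong R_t$, so $\lambda_1$ is an even real-analytic function of $t$, $\alpha(-t)=\beta(t)$, and in particular $\lambda_1'(0)=0$.

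The heart of the proof is to establish $\beta(t)>\alpha(t)$ strictly for $t>0$, expressing the intuition that the fundamental mode of the asymmetric triangle $R_t$ (long in $x$, short in $y$) concentrates its gradient energy in the shorter direction. Via Hadamard's variation formula along the hypotenuse, parameterized by $\lambda\in[0,1]$ with $\lambda=0$ at the acute-angle vertex $(e^t,0)$, this is equivalent to
\[
\lambda_1'(t) = \int_0^{1/2}(1-2\lambda)\bigl[u_\nu^2(1-\lambda)-u_\nu^2(\lambda)\bigr]\,d\lambda > 0,
\]
i.e., one must show $u_\nu^2$ is weightedly larger on the half of the hypotenuse adjacent to the obtuse-angle vertex. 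Local corner asymptotics $u\sim r^{\pi/\theta}$ yield $u_\nu\sim r^{\pi/\theta-1}$, so $u_\nu$ vanishes to order greater than $3$ at the acute-angle vertex (where $\pi/\alpha>4$) and to order less than $3$ at the other vertex, orienting the asymmetry correctly in a neighborhood of each endpoint.

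The main obstacle is globalizing this local corner analysis into the pointwise or weighted-integral comparison required across the full half-hypotenuse. The most natural route, and the one I would pursue first given the paper's reliance on Siudeja's techniques in the proof of Proposition~\ref{prop:fund}, is to adapt Solynin's continuous Steiner symmetrization along the family $R_t$, constructing a one-parameter area-preserving deformation connecting $R_{t_1}$ to $R_{t_2}$ for $0\le t_1<t_2$ along which $\lambda_1$ is monotone, as Siudeja~\cite{S10} does for isosceles triangles. A moving-plane/Alexandrov reflection argument on the hypotenuse appears to be obstructed by the fact that the natural linear involution $(x,y)\mapsto(e^{2t}y,e^{-2t}x)$, which preserves $R_t$ and swaps its two hypotenuse endpoints, fails to be a Euclidean isometry when $t\ne 0$.
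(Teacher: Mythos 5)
Your reduction is clean and correct as far as it goes: the shear $\tau_h$ combined with Lemma~\ref{lemtrace} (with $n=1$) does give $\lambda_1(R_{t+h}) \le e^{-2h}\alpha(t)+e^{2h}\beta(t)$, and since the first eigenvalue is simple this upper bound, touching the eigenvalue curve at $h=0$, yields $\lambda_1'(t)=2\bigl(\beta(t)-\alpha(t)\bigr)$. So the lemma is indeed equivalent to showing that for $t>0$ the fundamental mode carries strictly more Dirichlet energy in the short direction than in the long one. But that is exactly where your argument stops. The corner asymptotics control $u_\nu$ only in shrinking neighborhoods of the two endpoints of the hypotenuse; they say nothing about the sign of the weighted integral $\int_0^{1/2}(1-2\lambda)\bigl[u_\nu^2(1-\lambda)-u_\nu^2(\lambda)\bigr]\,d\lambda$ over the bulk, no pointwise or rearrangement comparison of $u_\nu$ along the hypotenuse is supplied, and --- as you yourself observe --- the natural reflection that would give one fails to be an isometry for $t\ne 0$. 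The key inequality $\beta(t)>\alpha(t)$ is therefore asserted on physical intuition rather than proved. This is a genuine gap.

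Your fallback is in fact the route the paper takes, but naming continuous Steiner symmetrization is not the same as constructing the deformation. The paper's proof is concrete: normalize $R(\alpha)$ and $R(\beta)$ to have equal area, run a continuous Steiner symmetrization of $R(\alpha)$ in the direction perpendicular to its hypotenuse and \emph{stop} when the longer moving side reaches the length $y=\cos\beta\sqrt{\sin(2\alpha)/\sin(2\beta)}$ of the longer leg of $R(\beta)$, then run a second continuous symmetrization perpendicular to that side and stop when the triangle becomes right-angled; the terminal triangle has the right area and a leg of length $y$, so it must be $R(\beta)$, and each stage is non-increasing for $\lambda_1$. Making this work requires the quantitative restriction $\sin\alpha<\sin\beta\le\sqrt{2}\,\sin\alpha\cos\alpha$, which guarantees $x<y$ so that the first stopping time occurs before full symmetrization and the intermediate triangles stay obtuse; global monotonicity on $(0,\pi/4]$ then follows by chaining such small steps. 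None of these ingredients --- the stopping rules, the identification of the terminal triangle with $R(\beta)$, the smallness restriction on $\beta-\alpha$ --- appear in your sketch, so the symmetrization route also remains unexecuted. To complete the proof you should either carry out this two-stage symmetrization in detail or find an independent proof that $\beta(t)\ge\alpha(t)$.
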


\begin{proof}[Proof of Lemma~\ref{lemmabelow}]
Let $\alpha<\beta\le\pi/4$. Assume the hypotenuse of $R(\alpha)$ has length $1$. The other sides have lengths $\sin \alpha$ and $\cos \alpha$, and the area of $R(\alpha)$ equals $\sin(2\alpha)/4$.  Assume the hypotenus of $R(\beta)$ has length $\sqrt{\sin(2\alpha)/\sin(2\beta)}$, so that $R(\beta)$ has the same area as $R(\alpha)$.

Consider the Steiner symmetrization of $R(\alpha)$, that is, an obtuse isosceles triangle with longest side $1$ and altitude $\sin(2\alpha)/2$ (see Figure \ref{rightsym}). The shorter sides have lengths $x=\sqrt{1+\sin^2(2\alpha)}/2$.

  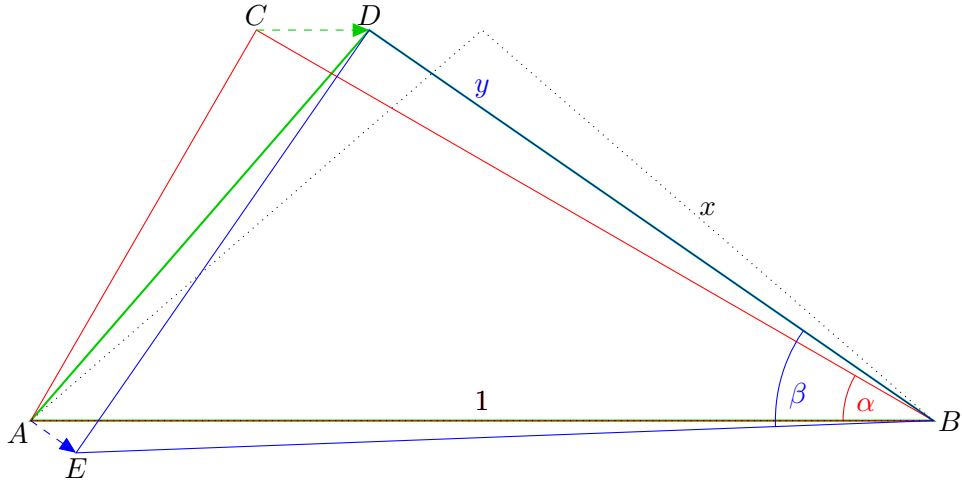
\begin{figure}[t]
    \begin{tikzpicture}[scale=3]
      \draw[thick,green!80!black] (0,0) -- (4,0) -- (1.5,{sqrt(3)}) -- cycle;
      \draw[red] (0,0) -- node [above,pos=0.5] {$1$} (4,0) -- (1,{sqrt(3)}) -- cycle;
      \begin{scope}
           \clip (0,0) -- node [above,pos=0.5] {$1$} (4,0) -- (1,{sqrt(3)}) -- cycle;
	   \draw[red] (4,0) circle (0.4);
	   \draw[red] (3.7,0.07) node {$\alpha$};
      \end{scope}
      \draw[dotted] (0,0) -- (4,0) -- node [above,pos=0.5] {$x$} (2,{sqrt(3)}) -- cycle;
      \draw[blue] (1.5,{sqrt(3)}) -- node [above,pos=0.2] {$y$} (4,0) -- ($(1.5,{sqrt(3)})!8cm*sqrt(3)/sqrt(37)!-90:(4,0)$) coordinate (a) -- cycle;
      \begin{scope}
           \clip (1.5,{sqrt(3)}) -- node [above,pos=0.2] {$y$} (4,0) -- ($(1.5,{sqrt(3)})!8cm*sqrt(3)/sqrt(37)!-90:(4,0)$) coordinate (a) -- cycle;
	   \draw[blue] (4,0) circle (0.7);
	   \draw[blue] (3.4,0.11) node {$\beta$};
      \end{scope}
      \draw[dashed,blue,-triangle 45] (0,0) -- (a);
      \draw[green!80!black,dashed,-triangle 45] (1,{sqrt(3)}) -- +(1/2,0);
      \draw (0,0) node [below left=-3pt] {$A$};
      \draw (4,0) node [right=-2pt] {$B$};
      \draw (1,{sqrt(3)}) node [above=-2pt] {$C$};
      \draw (1.5,{sqrt(3)}) node [above=-2pt] {$D$};
      \draw (a) node [below=-2pt] {$E$};
    \end{tikzpicture}
    \caption{Transformation of the right triangle $R(\alpha)=ABC$ onto $R(\beta)=BDE$. The triangle $ABD$ is the first step of the continuous symmetrization performed along arrow $CD$. The second symmetrization is performed along arrow $AE$. }
    \label{rightsym}
  \end{figure}

Suppose $\beta$ satisfies $\sin\alpha<\sin\beta\le \sqrt{2}\sin\alpha\cos\alpha$. This assumption ensures (after a short calculation) that $x<y$ where
\[
y=\cos \beta \sqrt{\sin(2\alpha)/\sin(2\beta)} ,
\]
is the length of the longer leg of $R(\beta)$.

We perform a continuous Steiner symmetrization with respect to the line perpendicular to the hypotenuse of $R(\alpha)$. We stop when the longer of the moving sides has length equal to $y$. The resulting triangle is obtuse. Now perform another continuous symmetrization with respect to the line perpendicular to the side of length $y$. Stop when the triangle becomes a right triangle. This right triangle has the same area as $R(\beta)$ and has one leg of the same length $y$; hence it must be $R(\beta)$. Its fundamental tone is less than or equal to that of $R(\alpha)$, by properties of continuous symmetrization. Hence $\left. \lambda_1 A \right|_{R(\alpha)} \geq \left. \lambda_1 A \right|_{R(\beta)}$. It follows that
$\left. \lambda_1 A \right|_{R(\alpha)}$ is decreasing for $\alpha \in (0,\pi/4]$.
\end{proof}

\begin{remark} \rm
For the second eigenvalue, Figure~\ref{figLA} shows that $\lambda_2 A$ and $\lambda_2 L^2$ are \emph{not} minimal among isosceles triangles at the equilateral triangle. This fact is analogous to the situation for convex domains, where the minimizers are certain ``stadium-like'' sets rather than disks (Henrot \emph{et al.} \cite{BBH09,HO03}).

\end{remark}

\appendix
\section{\bf The equilateral triangle and its eigenvalues} \label{equilateral}

The frequencies of the equilateral triangle were derived
about 150 years ago by Lam\'{e} \cite[pp.~131--135]{L66}. See the treatment in the text of Mathews and Walker \cite[pp.~237--239]{MW70} or in the paper by Pinsky \cite{Pi85}. Note also the recent exposition by McCartin \cite{M03}, which builds on work of Pr\'{a}ger \cite{Pr98}.

The equilateral triangle $E_1$ with sidelength $1$ has eigenvalues forming a doubly-indexed sequence:
\[
\sigma_{m,n} = (m^2 + mn + n^2) \cdot \frac{16 \pi^2}{9} , \qquad m,n \geq 1 .
\]
The first three eigenvalues are
\[
\lambda_1 = 3 \cdot \frac{16 \pi^2}{9} = \sigma_{1,1} \qquad \text{and} \qquad \lambda_2 = \lambda_3 =7 \cdot \frac{16 \pi^2}{9} = \sigma_{1,2}=\sigma_{2,1} .
\]
%

Indices with $m<n$ correspond to antisymmetric eigenfunctions. We denote those antisymmetric eigenvalues by $\lambda_1^a \leq \lambda_2^a \leq \cdots$.

\begin{lemma} \label{explicit}
We have $\lambda_j^a > \frac{11}{6} \lambda_j$ for $j=1,2,3$ and for $j=5,6,\ldots,110$. For the exceptional case $j=4$ where $\lambda_4^a < \frac{11}{6} \lambda_4$, one still has
\[
(\lambda_1^a+\lambda_2^a+\lambda_3^a+\lambda_4^a) > \frac{11}{6} (\lambda_1+\lambda_2+\lambda_3+\lambda_4) .
\]
\end{lemma}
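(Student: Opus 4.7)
The plan is to verify the claim by direct finite computation using the explicit formula $\sigma_{m,n} = (m^2+mn+n^2)\cdot 16\pi^2/9$. Since the factor $16\pi^2/9$ cancels from both sides of $\lambda_j^a > \tfrac{11}{6}\lambda_j$, it suffices to compare the integer quantities $q_j$ and $q_j^a$ defined as the sorted values (with multiplicity) of the quadratic form $q(m,n) = m^2+mn+n^2$, taken over all pairs $(m,n)$ with $m,n\geq 1$ and over pairs with $m<n$ respectively. The inequality becomes $6 q_j^a > 11 q_j$.

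First I would bound the range of enumeration. By Lemmas~\ref{counting} and \ref{antisym} the 110th terms satisfy $q_{110}, q_{110}^a \lesssim 2000$, so it suffices to enumerate all $(m,n)$ with $m,n\leq 45$, sort the resulting values of $q(m,n)$, and truncate. This produces
\[
(q_j)_{j\geq 1} = 3,\,7,\,7,\,12,\,13,\,13,\,19,\,19,\,21,\,21,\,27,\,28,\,28,\,31,\,31,\ldots
\]
\[
(q_j^a)_{j\geq 1} = 7,\,13,\,19,\,21,\,28,\,31,\,37,\,39,\,43,\,49,\,52,\ldots
\]
from which one verifies $6 q_j^a > 11 q_j$ term by term for $j\in\{1,2,3\}\cup\{5,6,\ldots,110\}$. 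Among small indices the tightest margin occurs at $j=2$, where $6\cdot 13 = 78 > 77 = 11\cdot 7$. All other ratios $q_j^a/q_j$ stay comfortably above $11/6 \approx 1.833$, and in fact grow toward the Weyl limit $2$ as $j$ increases, consistent with Lemma~\ref{compequilateral}.

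For the exceptional index $j=4$, one has $q_4=12$ and $q_4^a=21$, so $6\cdot 21 = 126 < 132 = 11\cdot 12$. The sum replacement is immediate:
\[
6\sum_{j=1}^{4} q_j^a = 6(7+13+19+21) = 360 > 319 = 11\cdot 29 = 11 \sum_{j=1}^{4} q_j .
\]

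The only obstacle is the clerical bulk of 110 comparisons, but there is no mathematical subtlety; a short script (or a careful by-hand table) enumerating pairs up to the cutoff and sorting confirms the inequality with substantial margin at all but the two tight indices $j=2$ and $j=4$. A pairing argument matching each symmetric eigenvalue with a strictly larger antisymmetric one could in principle streamline the check, but at a fixed finite threshold direct enumeration is the cleanest and most transparent approach.
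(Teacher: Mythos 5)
Your proposal is correct and matches the paper's proof, which likewise verifies the claim by directly computing the first $110$ values of the quadratic form $m^2+mn+n^2$ over $m,n\geq 1$ and over $m<n$ (tabulated as index pairs in Table~\ref{tab:}) and comparing term by term. Your explicit checks of the tight case $j=2$ ($78>77$), the exceptional case $j=4$ ($126<132$ but $360>319$ for the sums), and the safe enumeration cutoff are all accurate.
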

To prove the lemma, simply compute the  first $110$ eigenvalues $\lambda_j$ and antisymmetric eigenvalues $\lambda_j^a$, using the indices $m$ and $n$ listed in Table~\ref{tab:}.

Lemma~\ref{explicit} is used to help prove Lemma~\ref{compequilateral}, in Section~\ref{seccomp}.

\newcommand{\binomb}[2]{\genfrac{[}{]}{0pt}{}{#1}{#2}}
\newcommand{\frace}[2]{\genfrac{}{}{0pt}{}{#2}{#1}}
\newcommand\T{\rule{0pt}{2.8ex}}
\newcommand\B{\rule[-1.5ex]{0pt}{0pt}}

{
\begin{table}
  \centering
  \begin{tabular}{|c|c|c|c|c|c|c|c|c|c|c|c|c|c|}
    \hline
\T\B$\!\!\frace{(1,1)}{[1,2]}\!\!$&$ \!\!\frace{(2,1)}{[1,3]}\!\!$&$
\!\!\frace{(1,2)}{[2,3]}\!\!$&$\!\!\frace{(2,2)}{[1,4]}\!\!$&$ \!\!\frace{(3,1)}{[2,4]}\!\!$&$
\!\!\frace{(1,3)}{[1,5]}\!\!$&$ \!\!\frace{(3,2)}{[3,4]}\!\!$&$ \!\!\frace{(2,3)}{[2,5]}\!\!$&$ \!\!\frace{(4,1)}{[1,6]}\!\!$&$ \!\!\frace{(1,4)}{[3,5]}\!\!$\\
    \hline
\T\B$\!\!\frace{(3,3)}{[2,6]}\!\!$&$ \!\!\frace{(4,2)}{[1,7]}\!\!$&$ \!\!\frace{(2,4)}{[4,5]}\!\!$&$\!\!\frace{(5,1)}{[3,6]}\!\!$&$ \!\!\frace{(1,5)}{[2,7]}\!\!$&$ \!\!\frace{(4,3)}{[1,8]}\!\!$&$ \!\!\frace{(3,4)}{[4,6]}\!\!$&$ \!\!\frace{(5,2)}{[3,7]}\!\!$&$ \!\!\frace{(2,5)}{[2,8]}\!\!$&$ \!\!\frace{(6,1)}{[5,6]}\!\!$\\
    \hline
\T\B$\!\!\frace{(1,6)}{[1,9]}\!\!$&$ \!\!\frace{(4,4)}{[4,7]}\!\!$&$ \!\!\frace{(5,3)}{[3,8]}\!\!$&$ \!\!\frace{(3,5)}{[2,9]}\!\!$&$ \!\!\frace{(6,2)}{[5,7]}\!\!$&$ \!\!\frace{(2,6)}{[1,10]}\!\!$&$ \!\!\frace{(7,1)}{[4,8]}\!\!$&$ \!\!\frace{(1,7)}{[3,9]}\!\!$&$ \!\!\frace{(5,4)}{[2,10]}\!\!$&$ \!\!\frace{(4,5)}{[6,7]}\!\!$\\
    \hline
\T\B$\!\!\frace{(6,3)}{[5,8]}\!\!$&$ \!\!\frace{(3,6)}{[4,9]}\!\!$&$ \!\!\frace{(7,2)}{[1,11]}\!\!$&$ \!\!\frace{(2,7)}{[3,10]}\!\!$&$ \!\!\frace{(8,1)}{[2,11]}\!\!$&$ \!\!\frace{(1,8)}{[6,8]}\!\!$&$ \!\!\frace{(5,5)}{[5,9]}\!\!$&$ \!\!\frace{(6,4)}{[4,10]}\!\!$&$ \!\!\frace{(4,6)}{[1,12]}\!\!$&$ \!\!\frace{(7,3)}{[3,11]}\!\!$\\
    \hline
\T\B$\!\!\frace{(3,7)}{[7,8]}\!\!$&$ \!\!\frace{(8,2)}{[6,9]}\!\!$&$ \!\!\frace{(2,8)}{[2,12]}\!\!$&$ \!\!\frace{(9,1)}{[5,10]}\!\!$&$ \!\!\frace{(6,5)}{[4,11]}\!\!$&$ \!\!\frace{(5,6)}{[1,13]}\!\!$&$ \!\!\frace{(1,9)}{[3,12]}\!\!$&$ \!\!\frace{(7,4)}{[7,9]}\!\!$&$ \!\!\frace{(4,7)}{[6,10]}\!\!$&$ \!\!\frace{(8,3)}{[2,13]}\!\!$\\
    \hline
\T\B$\!\!\frace{(3,8)}{[5,11]}\!\!$&$ \!\!\frace{(9,2)}{[4,12]}\!\!$&$ \!\!\frace{(2,9)}{[1,14]}\!\!$&$ \!\!\frace{(6,6)}{[8,9]}\!\!$&$ \!\!\frace{(7,5)}{[3,13]}\!\!$&$ \!\!\frace{(5,7)}{[7,10]}\!\!$&$ \!\!\frace{(10,1)}{[6,11]}\!\!$&$ \!\!\frace{(1,10)}{[2,14]}\!\!$&$ \!\!\frace{(8,4)}{[5,12]}\!\!$&$ \!\!\frace{(4,8)}{[4,13]}\!\!$\\
    \hline
\T\B$\!\!\frace{(9,3)}{[1,15]}\!\!$&$ \!\!\frace{(3,9)}{[8,10]}\!\!$&$ \!\!\frace{(10,2)}{[7,11]}\!\!$&$ \!\!\frace{(2,10)}{[3,14]}\!\!$&$ \!\!\frace{(7,6)}{[6,12]}\!\!$&$ \!\!\frace{(6,7)}{[5,13]}\!\!$&$ \!\!\frace{(8,5)}{[2,15]}\!\!$&$ \!\!\frace{(5,8)}{[4,14]}\!\!$&$ \!\!\frace{(11,1)}{[9,10]}\!\!$&$ \!\!\frace{(9,4)}{[8,11]}\!\!$\\
    \hline
\T\B$\!\!\frace{(4,9)}{[1,16]}\!\!$&$ \!\!\frace{(1,11)}{[7,12]}\!\!$&$ \!\!\frace{(10,3)}{[3,15]}\!\!$&$ \!\!\frace{(3,10)}{[6,13]}\!\!$&$ \!\!\frace{(11,2)}{[5,14]}\!\!$&$ \!\!\frace{(7,7)}{[2,16]}\!\!$&$ \!\!\frace{(2,11)}{[9,11]}\!\!$&$ \!\!\frace{(8,6)}{[4,15]}\!\!$&$ \!\!\frace{(6,8)}{[8,12]}\!\!$&$ \!\!\frace{(9,5)}{[1,17]}\!\!$\\
    \hline
\T\B$\!\!\frace{(5,9)}{[7,13]}\!\!$&$ \!\!\frace{(10,4)}{[3,16]}\!\!$&$ \!\!\frace{(4,10)}{[6,14]}\!\!$&$ \!\!\frace{(12,1)}{[5,15]}\!\!$&$ \!\!\frace{(1,12)}{[2,17]}\!\!$&$ \!\!\frace{(11,3)}{[10,11]}\!\!$&$ \!\!\frace{(3,11)}{[9,12]}\!\!$&$ \!\!\frace{(8,7)}{[4,16]}\!\!$&$ \!\!\frace{(7,8)}{[8,13]}\!\!$&$ \!\!\frace{(9,6)}{[7,14]}\!\!$\\
    \hline
\T\B$\!\!\frace{(6,9)}{[1,18]}\!\!$&$ \!\!\frace{(12,2)}{[3,17]}\!\!$&$ \!\!\frace{(2,12)}{[6,15]}\!\!$&$ \!\!\frace{(10,5)}{[5,16]}\!\!$&$ \!\!\frace{(5,10)}{[10,12]}\!\!$&$ \!\!\frace{(11,4)}{[2,18]}\!\!$&$ \!\!\frace{(4,11)}{[9,13]}\!\!$&$ \!\!\frace{(13,1)}{[8,14]}\!\!$&$ \!\!\frace{(1,13)}{[4,17]}\!\!$&$ \!\!\frace{(12,3)}{[7,15]}\!\!$\\
    \hline
\T\B$\!\!\frace{(3,12)}{[1,19]}\!\!$&$ \!\!\frace{(8,8)}{[3,18]}\!\!$&$ \!\!\frace{(9,7)}{[6,16]}\!\!$&$ \!\!\frace{(7,9)}{[11,12]}\!\!$&$ \!\!\frace{(10,6)}{[10,13]}\!\!$&$ \!\!\frace{(6,10)}{[5,17]}\!\!$&$ \!\!\frace{(13,2)}{[9,14]}\!\!$&$ \!\!\frace{(2,13)}{[2,19]}\!\!$&$ \!\!\frace{(11,5)}{[8,15]}\!\!$&$ \!\!\frace{(5,11)}{[4,18]}\!\!$\\
    \hline
  \end{tabular}
  \caption{Pairs of integers $(m,n)$ giving the first 110 eigenvalues $\lambda_j$ along with pairs $[m,n]$ giving the first 110 antisymmetric eigenvalues $\lambda_j^a$, for an equilateral triangle. The index $j$ increases from $1$ to $10$ across the first row, and so on. 
  \vspace*{-24pt}
  }
  \label{tab:}
\end{table}
}


\begin{thebibliography}{99}
\bibitem{AC10} B. Andrews and J. Clutterbuck. \emph{Proof of the fundamental gap conjecture}. Preprint, 2010. 	 arXiv:1006.1686v1

\bibitem{AF06} P. Antunes and P. Freitas. \emph{New bounds for the principal
Dirichlet eigenvalue of planar regions.} Experiment. Math. 15
(2006), 333--342.

\bibitem{AF08} P. Antunes and P. Freitas. \emph{A numerical study of the spectral gap.}
J. Phys. A  41 (2008), 055201, 19 pp.

\bibitem{A99}
M. S. Ashbaugh. \emph{Isoperimetric and universal inequalities for
eigenvalues.} Spectral theory and geometry (Edinburgh, 1998),
95--139, London Math. Soc. Lecture Note Ser., 273, Cambridge Univ.
Press, Cambridge, 1999.

\bibitem{B79} C. Bandle. Isoperimetric Inequalities and
Applications. Pitman, Boston, Mass., 1979.

\bibitem{B72} F. Berezin. \emph{Covariant and contravariant symbols of operators.}
Izv. Akad. Nauk SSSR 37 (1972) 1134--1167 (in
Russian); English transl. in: Math. USSR-Izv. 6 (1972) 1117--1151 (1973).

\bibitem{vdB}
 M. van den Berg. \emph{On condensation in the free-boson gas and the spectrum of the  Laplacian}, J. Statist. Phys. 31 (1983), 623--637.

\bibitem{BBH09} D. Bucur, G. Buttazzo and A. Henrot. \emph{Minimization of $\lambda_{2}(\Omega)$ with a perimeter constraint.} Indiana Univ. Math. J. 58 (2009), 2709--2728.

\bibitem{FHM} L. Fox, P. Henrici\ and\ C. Moler. \emph{Approximations and bounds for eigenvalues of elliptic operators.} SIAM J. Numer. Anal. 4 (1967), 89--102.

\bibitem{FLW09} R. L. Frank, M. Loss and T. Weidl.
\emph{P\'{o}lya's conjecture in the presence of a constant magnetic field.}
J. Eur. Math. Soc. (JEMS) 11 (2009), 1365--1383.

\bibitem{F06} P. Freitas. \emph{Upper and lower bounds for the first Dirichlet
eigenvalue of a  triangle.} Proc. Amer. Math. Soc. 134 (2006),
2083--2089.

\bibitem{F07}
P. Freitas. \emph{Precise bounds and asymptotics for the first
Dirichlet eigenvalue of triangles and rhombi.} J. Funct. Anal. 251
(2007), 376--398.

\bibitem{FS10} P. Freitas and B. Siudeja. \emph{Bounds for the first Dirichlet eigenvalue of triangles and quadrilaterals.} ESAIM Control Optim. Calc. Var. 16 (2010), 648--676.

\bibitem{He06} A. Henrot. Extremum Problems for Eigenvalues of Elliptic Operators.
Frontiers in Mathematics. Birkh\"{a}user Verlag, Basel, 2006.

\bibitem{HO03} A. Henrot and E. Oudet. \emph{Minimizing the second eigenvalue of the Laplace operator with Dirichlet boundary conditions.} Arch. Ration. Mech. Anal. 169 (2003), 73--87.

\bibitem{H96} M. N. Huxley. Area, Lattice Points, and Exponential Sums.
London Mathematical Society Monographs. New Series, 13. Oxford Science Publications. Oxford University Press, New York, 1996.

\bibitem{K06} S. Kesavan. Symmetrization \& Applications.
Series in Analysis, 3. World Scientific Publishing, Hackensack, NJ, 2006.

\bibitem{L66} M. G. Lam\'{e}. Le\c{c}ons sur la Th\'{e}orie Math\'{e}matique de L'\'{E}lasticit\'{e} des Corps Solides. Deuxi\`{e}me \'{e}dition. Gauthier--Villars, Paris, 1866.

\bibitem{L97} A. Laptev. \emph{Dirichlet and Neumann eigenvalue problems on domains in Euclidean spaces.} J. Funct. Anal. 151 (1997), 531--545.

\bibitem{LS09a} R. S. Laugesen and B. A. Siudeja.
\emph{Maximizing Neumann fundamental tones of triangles.}
J. Math. Phys. 50, 112903 (2009).

\bibitem{LS10} R. S. Laugesen and B. A. Siudeja.
\emph{Minimizing Neumann fundamental tones of triangles:
an optimal Poincar\'{e} inequality.} J. Differ. Equations, to appear.
\url{www.math.uiuc.edu/~laugesen/}

\bibitem{LS10b} R. S. Laugesen and B. A. Siudeja.
\emph{Sums of Laplace eigenvalues --- rotationally symmetric maximizers in the plane.} Preprint. \url{www.math.uiuc.edu/~laugesen/}

\bibitem{LY83}  P. Li\ and\ S. T. Yau. \emph{On the Schr\"odinger equation and the eigenvalue problem.} Comm. Math. Phys. 88 (1983), 309--318.


\bibitem{LR10}
Z. Lu and J. Rowlett. \emph{The fundamental gap.} Preprint (2010). arXiv:1003.0191v1

\bibitem{M62} E. Makai. \emph{On the principal frequency of a membrane
and the torsional rigidity of  a beam.} In: Studies in mathematical
analysis and related topics, 227--231, Stanford Univ. Press,
Stanford, Calif, 1962.

\bibitem{MW70}
J. Mathews and R. L. Walker. Mathematical Methods of Physics. Second edition. W. A. Benjamin, New York, 1970.

\bibitem{M03}
B. J. McCartin. \emph{Eigenstructure of the equilateral triangle. I. The Dirichlet problem.} SIAM Rev. 45 (2003), 267--287.

\bibitem{PM}
C. B. Moler\ and\ L. E. Payne. \emph{Bounds for eigenvalues and eigenvectors of symmetric operators.} SIAM J. Numer. Anal. 5 (1968), 64--70.

\bibitem{Pi85} M. A. Pinsky.
\emph{Completeness of the eigenfunctions of the equilateral triangle.}
SIAM J. Math. Anal. 16 (1985), 848--851.

\bibitem{P52} G. P\'olya. \emph{Sur le r\^{o}le des domaines sym\'{e}triques dans le calcul de certaines grandeurs physiques.} C. R. Acad. Sci. Paris 235, (1952), 1079--1081.

\bibitem{P61} G. P\'{o}lya.
\emph{On the eigenvalues of vibrating membranes.}
Proc. London Math. Soc. (3) 11 (1961), 419--433.

\bibitem{PS51}
G. P\'{o}lya and G. Szeg\H{o}. Isoperimetric Inequalities in
Mathematical Physics. Princeton University Press, Princeton, New
Jersey, 1951.

\bibitem{Pr98} M. Pr\'{a}ger. \emph{Eigenvalues and eigenfunctions of the Laplace operator on an equilateral triangle.}  Appl. Math. 43 (1998), 311--320.


\bibitem{S07} B. Siudeja. \emph{Sharp bounds for
eigenvalues of triangles.} Michigan Math. J. 55 (2007), 243--254.

\bibitem{S10}
B. Siudeja. \emph{Isoperimetric inequalities for eigenvalues of triangles.} Indiana Univ. Math. J. 59 (2010), no.\ 2, to appear.

\bibitem{S96}
R. S. Strichartz. \emph{Estimates for sums of eigenvalues
for domains in homogeneous spaces.} J. Funct. Anal. 137 (1996), 152--190.

\end{thebibliography}
\end{document}